\documentclass[11pt,reqno]{amsart}
\usepackage{amsmath,amssymb,amsthm,amscd,amsfonts}
\usepackage{graphicx}%
\usepackage{fancyhdr}
\usepackage{enumerate}
\usepackage{pdfsync}
\usepackage{subfigure}
\usepackage{enumerate}
\usepackage{tikz-cd}

\usepackage{color}

\theoremstyle{plain} \numberwithin{equation}{section}
\newtheorem{thm}{Theorem}[section]
\newtheorem{cor}[thm]{Corollary}

\newtheorem*{mal}{Mapping Analysis Lemma}
\newtheorem{lem}[thm]{Lemma}
\newtheorem{prop}[thm]{Proposition}
\newtheorem{clm}[thm]{Claim}

\theoremstyle{definition}
\newtheorem{defn}[thm]{Definition}

 \topmargin-2cm

\newcommand{\diam}{\operatorname{diam}}
\newcommand{\proj}{\operatorname{proj}}
\newcommand{\cS}{\mathcal S^1}

\newcommand{\bbS}{\mathbb S^1}
\newcommand{\cl}{\operatorname{cl}}

\newcommand{\hull}{\operatorname{Hull}}
\newcommand{\Ball}{{\mathrm{B}}}

\renewcommand{\d}{\operatorname{dist}}
\newcommand{\im}{\operatorname{Im}}

\textwidth6.25in
\textheight9in

\setlength{\topmargin}{.125in} \addtolength{\topmargin}{-\headheight}
\addtolength{\topmargin}{-\headsep}

\setlength{\oddsidemargin}{0in}

\oddsidemargin  0.0in \evensidemargin 0.0in 


\begin{document}

\title{Homotopy type of planar continua}
\author{Curtis Kent}

\begin{abstract}
We prove that the homotopy type of a map from a Peano continuum into a planar or one-dimensional space is determined by the induced homomorphism of fundamental groups. This provides a new proof that planar sets are aspherical and is used to prove that two planar or one-dimensional Peano continua are homotopy equivalent if and only if they have isomorphic fundamental groups.
\end{abstract}

\maketitle


\section{Introduction}

A central theme in topology is to find invariants and determine to what extent these invariants classify certain categories of topological spaces.  Poincar\'{e} defined the fundamental group and showed that it classified closed surfaces \cite{Poincare1895}.  He then asked to what extent this would hold for manifolds of higher dimension.  While the fundamental group is not sufficient to classify all higher dimensional manifolds, Whitehead proved  that  any two aspherical CW-complexes are homotopy equivalent exactly when they have isomorphic fundamental groups \cite{Whitehead_I49,Whitehead_II49}.



The standard techniques used to construct homotopy equivalences for CW-complexes are inadequate when considering spaces with local topological obstructions.  In \cite{eda}, Eda used the rigidity of homotopy classes of loops in one-dimensional spaces to prove that Whitehead's Theorem holds for one-dimensional Peano continua that are not locally simply connected at any point and later for all one-dimensional Peano continua in \cite{eda7}.

Here we will consider the category of planar and one-dimensional Peano continua and prove the following.

\begin{thm}\label{B}  Let $X$ and $ Y$ be planar or one-dimensional Peano continua.  The fundamental groups of $X$ and $Y$ are isomorphic if and only if $X$ and $Y$ are homotopy equivalent.\end{thm}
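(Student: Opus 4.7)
The forward direction is immediate: a homotopy equivalence induces an isomorphism on fundamental groups. So the plan concentrates on producing a homotopy equivalence from a given isomorphism $\phi\colon \pi_1(X)\to\pi_1(Y)$, and the strategy is the classical Whitehead-style argument powered by the paper's announced main tool: that the homotopy class of a map from a Peano continuum into a planar or one-dimensional space is determined by the induced homomorphism on $\pi_1$ (a ``Mapping Analysis Lemma'' of the sort signaled by the theorem list).

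The proof will proceed in two steps. \textbf{Realization step:} given the isomorphism $\phi$, I would produce continuous maps $f\colon X\to Y$ and $g\colon Y\to X$ with $f_* = \phi$ and $g_* = \phi^{-1}$, up to choice of basepoint and inner automorphism. Here one needs a realization result saying that every homomorphism between the fundamental groups of planar/one-dimensional Peano continua is induced by a continuous map; this is the kind of rigidity statement Eda established in the one-dimensional case, and the analog for planar continua should be part of the technology developed earlier in the paper. \textbf{Inversion step:} once $f$ and $g$ exist, the compositions satisfy $(g\circ f)_* = \phi^{-1}\circ \phi = \mathrm{id}_{\pi_1(X)} = (\mathrm{id}_X)_*$, and likewise $(f\circ g)_*=(\mathrm{id}_Y)_*$. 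Both $g\circ f$ and $\mathrm{id}_X$ are maps from the Peano continuum $X$ into a planar or one-dimensional space inducing the same homomorphism on $\pi_1$, so the Mapping Analysis Lemma forces $g\circ f \simeq \mathrm{id}_X$; symmetrically $f\circ g\simeq \mathrm{id}_Y$. Therefore $f$ is a homotopy equivalence.

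Two technical points need care. First, basepoints and conjugation: the ``induced homomorphism'' is only defined up to inner automorphism without a basepoint, so I would fix basepoints in $X$ and $Y$, arrange $f$ and $g$ to be basepoint-preserving (possibly after choosing a path in $Y$ between $f(x_0)$ and the basepoint, and similarly for $X$), and verify that the Mapping Analysis Lemma applies in its based or free form as needed. Second, the two classes of spaces, planar and one-dimensional, must be handled uniformly: the theorem permits $X$ and $Y$ to lie in different classes, so the hypothesis of the Mapping Analysis Lemma must genuinely cover every map whose target is planar \emph{or} one-dimensional, and the realization result must likewise produce maps into either type of target.

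I expect the main obstacle to be the realization step: constructing a continuous $f\colon X\to Y$ inducing a prescribed homomorphism on the highly non-free, non-finitely-generated fundamental groups of such spaces is subtle, since naive simplicial or CW techniques fail at points of wild local topology. The Mapping Analysis Lemma itself is stated as the paper's main input and will be proved elsewhere in the text, so once realization is in hand, the deduction of Theorem \ref{B} is essentially formal via the argument above.
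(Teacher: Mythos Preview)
Your proposal is correct and matches the paper's approach: the paper invokes Proposition \ref{inverse} (cited from an external preprint of Conner and Kent, not developed within this paper) for the realization step, then applies Theorem \ref{homotopic maps} to conclude that $g\circ f$ and $f\circ g$ are each conjugate by a path to the identity and hence homotopic to it. One terminological correction: the tool you are calling the ``Mapping Analysis Lemma'' is actually Theorem \ref{homotopic maps}; in the paper the name \emph{Mapping Analysis Lemma} refers to a different technical result (an upper semicontinuous decomposition of $\bbS$ associated to a nullhomotopic loop) used in the analysis of oscillatory geodesics.
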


When $X$ and $Y$ are both one-dimensional, Theorem \ref{B} provides an alternative proof of \cite[Theorem 1.1]{eda7}.  Our proof is based on the following result that classifies the homotopy type of a map in terms of the induced homomorphism on fundamental groups.

\begin{thm}\label{A}
    The homotopy type of a continuous map from a Peano continuum into a planar or one-dimensional Peano continuum is determined by the induced homomorphism on fundamental groups.
\end{thm}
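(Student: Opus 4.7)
The plan is to establish Theorem \ref{A} via a lift-and-contract argument in a generalized universal covering of the target. Given continuous maps $f, g : X \to Y$ inducing the same conjugacy class of homomorphism $\pi_1(X) \to \pi_1(Y)$, I would first reduce to based maps with $f_* = g_*$ by concatenating $g$ with a path in $Y$ that witnesses the conjugating element. Since $Y$ is either planar or one-dimensional, the Fischer--Zastrow theory furnishes a generalized universal covering $p : \widetilde{Y} \to Y$ with $\widetilde{Y}$ simply connected and $\pi_1(Y)$ acting freely. Applying the generalized lifting criterion, $f$ and $g$ lift to maps $\widetilde{f}, \widetilde{g} : \widetilde{X} \to \widetilde{Y}$, where $\widetilde{X}$ is the analogous cover of $X$; the equality $f_* = g_*$ makes $\widetilde{f}$ and $\widetilde{g}$ equivariant with respect to a common action of $\pi_1(X)$ on $\widetilde{Y}$. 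The problem then reduces to constructing a $\pi_1(X)$-equivariant homotopy $\widetilde{H} : \widetilde{X} \times I \to \widetilde{Y}$ from $\widetilde{f}$ to $\widetilde{g}$, which will descend through $p$ to the desired homotopy $f \simeq g$.

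To construct $\widetilde{H}$ I would exploit the geometric structure of the cover. In the one-dimensional case $\widetilde{Y}$ is an $\mathbb{R}$-tree, so the canonical geodesic bicombing --- equivariant under every isometry --- supplies the required homotopy by contracting along the unique arc joining $\widetilde{f}(x)$ to $\widetilde{g}(x)$. In the planar case, where $\widetilde{Y}$ is a simply connected planar-like space, the strategy is to produce a canonical family of reduced paths from $\widetilde{f}(x)$ to $\widetilde{g}(x)$, built uniformly from the complementary components of $Y$ in $S^2$ together with a coherent choice of basepoint in each, so that the family depends continuously on $x$ and equivariantly on the $\pi_1(X)$-action. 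Asphericity of the cover, proven along the way, then confirms that any two such equivariant families are themselves equivariantly homotopic and that the construction is essentially unique.

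The main obstacle is verifying that this canonical family of paths varies continuously in $x$ despite the absence of a CW structure on $\widetilde{Y}$ and the wild local topology of $Y$ at points where it fails to be semilocally simply connected. Near such points one cannot appeal to local triviality of $p$, so continuity must instead be extracted from a rigidity property of reduced paths --- an analogue in the planar setting of Eda's rigidity for homotopy classes of loops in one-dimensional spaces --- together with a careful analysis of how small perturbations of the endpoints propagate through the reduction procedure. Once continuity is in hand, equivariance is automatic from the canonical nature of the construction, and projecting $\widetilde{H}$ through $p$ yields the desired homotopy $f \simeq g$ and hence Theorem \ref{A}.
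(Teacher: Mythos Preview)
Your covering-space framing is a reasonable way to organize the argument, and in the one-dimensional case it is essentially equivalent to what the paper does: the unique arc in the $\mathbb{R}$-tree $\widetilde{Y}$ projects to the reduced path, which is exactly the oscillatory geodesic used in the paper. However, there are two genuine problems.

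First, you assume the domain $X$ admits a generalized universal cover $\widetilde{X}$. This is false for arbitrary Peano continua: the Griffiths twin cone, for instance, is a Peano continuum that is not homotopically Hausdorff and has no such cover. The paper avoids this entirely by never lifting the domain; instead it works directly in $Y$, associating to each $x\in X$ the well-defined homotopy class of paths from $f(x)$ to $g(x)$ described in Lemma~\ref{map induced paths}. You could repair your argument the same way, but then the covering-space apparatus on the domain side is doing no work.

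Second, and more seriously, the planar case is where all the content lies, and your proposal does not actually supply it. You describe a ``canonical family of reduced paths \ldots\ built uniformly from the complementary components of $Y$ in $S^2$ together with a coherent choice of basepoint in each'' and then acknowledge that continuity of this family is ``the main obstacle,'' to be handled by some unspecified planar analogue of Eda rigidity. But no construction is given, and there is no reason to expect that an ad hoc choice of basepoints in complementary disks yields anything continuous. The paper spends the entirety of Section~\ref{pog} on exactly this point: it defines oscillatory geodesics via a total-oscillation functional, proves existence (Theorem~\ref{existence}) and uniqueness (Theorem~\ref{unique}) in each homotopy class, and then --- through a delicate analysis of pseudo-oscillatory squares and a planar Mapping Analysis Lemma --- proves that $\delta$-endpoint-homotopic oscillatory geodesics parameterized by total oscillation are $\epsilon$-close (Proposition~\ref{continuously}). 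That proposition is the engine of the whole proof, and your proposal contains no substitute for it. Invoking asphericity ``along the way'' does not help either: asphericity tells you any two choices are homotopic, not that a continuous choice exists, and in any case the paper derives asphericity \emph{from} Theorem~\ref{A} rather than using it as input.
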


For a precise statement of Theorem \ref{A}, see Theorem \ref{homotopic maps}.  An immediate consequence of this is an alternate proof that planar and one-dimensional sets are aspherical.

\begin{cor} Planar and one-dimensional sets are aspherical.  \end{cor}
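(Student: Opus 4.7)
The plan is to derive asphericity directly from Theorem \ref{A}. Given any $n \geq 2$ and any continuous map $f : S^n \to X$ with $X$ planar or one-dimensional, I want to show that $f$ is null-homotopic. The key observation is that $\pi_1(S^n)$ vanishes for $n \geq 2$, so both $f$ and any constant map $c : S^n \to X$ induce the trivial homomorphism on fundamental groups; Theorem \ref{A} should then force them to be homotopic.

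To set up the application of Theorem \ref{A}, I would first replace the target by the image $Y := f(S^n)$. The sphere $S^n$ is a Peano continuum and, by the Hahn--Mazurkiewicz theorem, is a continuous image of $[0,1]$; composing with $f$ exhibits $Y$ as a continuous image of $[0,1]$, so $Y$ is itself a Peano continuum. As a subspace of a planar (respectively, one-dimensional) set, $Y$ is planar (respectively, one-dimensional). Hence the corestriction $\tilde f : S^n \to Y$ is a map from a Peano continuum into a planar or one-dimensional Peano continuum, satisfying the hypothesis of Theorem \ref{A}. Since $\tilde f_{*}$ agrees on the trivial group $\pi_1(S^n)$ with the induced homomorphism of any constant map, Theorem \ref{A} provides a homotopy $\tilde f \simeq c$ inside $Y$; post-composing with the inclusion $Y \hookrightarrow X$ yields a null-homotopy of $f$ in $X$. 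As $f$ and $n$ were arbitrary, $\pi_n(X) = 0$ for all $n \geq 2$, so $X$ is aspherical.

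The only, quite mild, obstacle is verifying that Theorem \ref{A} is applicable to $\tilde f$. This reduces to the two routine checks above: that a continuous image of a Peano continuum in a Hausdorff space is again a Peano continuum, and that planarity and one-dimensionality pass to arbitrary subsets. Once these are in hand, Theorem \ref{A} performs all of the real work, and the corollary follows with essentially no further argument.
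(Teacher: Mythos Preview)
Your proposal is correct and follows the same route the paper takes: pass to the image $f(S^n)$ to obtain a planar or one-dimensional Peano continuum, observe that $f$ and a constant map are conjugate by a path since $\pi_1(S^n)=0$, and invoke Theorem~\ref{homotopic maps}. This is exactly the argument the paper sketches in the corollary following Theorem~\ref{homotopic maps} (for simply connected domains), specialized to $S^n$.
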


One-dimensional spaces were shown to be aspherical in \cite{CurtisFort57} and it was long believed that planar sets were aspherical but a proof proved elusive.  The first complete proof was given by Zastrow in \cite{Z}.  Cannon, Conner, and Zastrow later provided a more concise proof by considering a total oscillation function on the space of paths \cite{ccz}.

The total oscillation function can be defined on any subset of $n$-dimensional Euclidean space and measures how many times a path crosses a dense set of parallel hyperplanes.  A path with minimal total oscillation in a homotopy class will be called an \emph{oscillatory geodesic}.  The existence of oscillatory geodesics for one-dimensional spaces follows from the existence of reduced paths and they vary continuously with their endpoints.

Demonstrating the existence of oscillatory geodesics for planar continua and that they vary continuously with their endpoints, see Proposition \ref{continuously}, requires a careful study of the nature of planar homotopies, which will encompass the majority of Section \ref{pog}.  With these properties in hand, we show that any two functions, which induce conjugate homomorphisms of the fundamental group, are homotopic by homotoping along oscillatory geodesics, proving Theorem \ref{A}.  This together with the fact that homomorphisms of the fundamental group of planar or one-dimensional continua are induced by continuous maps, see Proposition \ref{inverse}, will complete the proof of Theorem \ref{B}.

\section{Definitions and the delineation map}

We give the following standard definitions to fix notation.

\begin{defn}
    We will denote the unit disc in the plane by $\mathbb D$ and its boundary by $\bbS$.  We will denote the open ball about $x$ of radius $r$ by $B_r^X(x) = \{ y\in X \ | \ d(x,y)<r\}$ and the sphere about $x$ of radius $r$ by $S_r^X(x)= \{ y\in X \ |\ d(x,y) = r\}$.  If $X$ is a planar set, then $B_r^X(x) = B_r^{\mathbb R^2}(x)\cap X$ and $S_r^X(x) = S_r^{\mathbb R^2}(x)\cap X$.  For $A\subset X$ and $\epsilon>0$, let $\mathcal N_\epsilon(A) = \bigl\{x\in A\mid d(x,A)<\epsilon\bigr\}$.  We will denote the topological closure in $X$ of a subset $U$  by $\cl_X(U)$ or simply $\cl(U)$, when $X$ is understood.  A subset of a metric space is \emph{thick} if it is not contained in a bounded neighborhood of its boundary.  A subset of a metric space that is not thick is \emph{thin}.

    We will use $f : (Y,y_1,y_2) \to (X,x_1,x_2)$ to denote a continuous map from $Y$ to $X$ such that $f(y_i) = x_i$ for $i= 1,2$.


    The property of being homotopic relative to endpoints defines an equivalence relation on the set of paths in $X$.  The equivalence class relative to endpoints of a path $\alpha$ will be denoted by $[\alpha]$ and will be called the \emph{homotopy class} of $\alpha$.  The equivalence class under free homotopies of a loop will be called the \emph{free homotopy class}.  To avoid confusion when considering paths, we will always use homotopic to mean homotopic relative to endpoints and never to mean freely homotopic.

\end{defn}

\begin{defn}
    Let $k$ be a line in the plane and $X$ a planar set. Let $\pi_k: X \to X_k$ be a decomposition map whose nontrivial decomposition elements are the maximal line segments contained in $X$ that are parallel to $k$.  We  refer to $\pi_k$ as a \emph{delineation map}.
\end{defn}

The following is Theorem 1.4 in \cite{cc4}.

\begin{thm}
    If $X$ is a planar continuum and $k$ is any line in the plane, then $\pi_k$ defines an upper semicontinuous decomposition of $X$, $X_k$ is a one-dimensional continuum, and the induced homomorphism of fundamental groups is injective.  If $X$ is a Peano continuum, then so is $X_k$.
\end{thm}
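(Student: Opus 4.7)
The plan breaks into four tasks: upper semicontinuity of the decomposition, the continuum and Peano properties of $X_k$, one-dimensionality of $X_k$, and injectivity of the induced homomorphism on fundamental groups. Upper semicontinuity reduces to showing that the graph of the equivalence relation is closed in $X\times X$. If $x_n,y_n$ lie in a common decomposition element and $x_n\to x$, $y_n\to y$, then each segment $\overline{x_ny_n}$ is either degenerate or parallel to $k$ and contained in $X$; since $X$ is closed, the limit segment $\overline{xy}$ is again parallel to $k$ (or degenerate) and contained in $X$, so $x$ and $y$ lie in a common maximal $k$-parallel segment. Compactness of the decomposition elements then yields upper semicontinuity. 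From this $X_k$ is a continuum by the standard quotient argument, and it is Peano when $X$ is by the Hahn--Mazurkiewicz theorem applied to the continuous surjection $\pi_k$.

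For one-dimensionality, fix a line $\ell$ perpendicular to $k$ and let $p_\ell:\mathbb R^2\to\ell$ denote orthogonal projection. Since $p_\ell$ is constant on lines parallel to $k$, the restriction $p_\ell|_X$ factors as $q\circ\pi_k$ for a continuous $q:X_k\to\ell$. Each fiber $q^{-1}(t)$ is in bijection with the space of connected components of $X\cap\ell_t$, where $\ell_t$ is the $k$-parallel line through $t$; as a closed subset of a line, $X\cap\ell_t$ has zero-dimensional component space (any two components can be separated by a clopen set coming from a point of $\ell_t\setminus X$ lying between them). The Hurewicz dimension-raising theorem, which applies because $q$ is automatically closed by compactness, then gives $\dim X_k\le\dim\ell+0=1$. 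Unless $X$ is itself a single $k$-parallel segment, in which case the conclusion is trivial, $X_k$ is nondegenerate and hence exactly one-dimensional.

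The main obstacle is injectivity of $(\pi_k)_*$. The naive approach of lifting a null-homotopy of $\pi_k\circ\gamma$ in $X_k$ to one of $\gamma$ in $X$ requires a continuous selection of a point from each segment fiber, and such a selection need not exist: the family of fibers is upper but not lower semicontinuous, so Michael's selection theorem does not apply. I would instead approximate a given null-homotopy $H:\mathbb D\to X_k$ by a map into a finite graph $G\subseteq X_k$ and construct a lifted null-homotopy of $\gamma$ edge-by-edge over $G$, gluing the pieces by exploiting contractibility of individual decomposition elements. The delicate point is controlling behavior near vertices of $G$, where the preimage in $X$ may be a complicated union of $k$-parallel segments meeting a transverse arc of $X$; it is here that the planarity of $X$ is essential, and I would expect this analysis to dominate the proof.
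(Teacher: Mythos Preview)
The paper does not prove this theorem at all: it is quoted verbatim as Theorem~1.4 of Cannon and Conner~\cite{cc4} and used as a black box. There is therefore no proof in the present paper to compare your proposal against.

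That said, your outline for the first three tasks is sound. The closed-graph argument for upper semicontinuity is correct; the continuum and Peano properties of $X_k$ follow as you say; and your use of the Hurewicz dimension-raising theorem via the transverse projection $q:X_k\to\ell$ is a clean way to get $\dim X_k\le 1$, once you observe that each fiber $q^{-1}(t)$ is the component space of the closed linear set $X\cap\ell_t$, hence compact, Hausdorff, and totally disconnected, so zero-dimensional.

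For $\pi_1$-injectivity you have correctly identified both the obstacle (no continuous section of $\pi_k$) and the fact that planarity must enter essentially, but your sketch stops short of a proof. Approximating a null-homotopy in $X_k$ by a map to a finite graph and trying to lift edge-by-edge will run into exactly the difficulty you flag at the vertices, and it is not clear your description contains an idea for resolving it. The argument in~\cite{cc4} proceeds differently: rather than lifting homotopies from $X_k$, one works in the plane, using that a nullhomotopic loop in $X$ bounds a singular disc whose image already lies in $X$ (planar sets are $\pi_1$-shape injective), and analyzing that disc against the family of $k$-parallel lines. If you want to complete this part yourself, that is the direction to look; otherwise, citing~\cite{cc4} as the present paper does is the appropriate move.
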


\begin{defn}
    A path $f:[0,1] \to X$ into a one-dimensional space is \emph{reducible} if there is an open interval $U\subset[0,1]$ such that $f|_{\cl{(U)}}$ is a nondegenerate nullhomotopic loop.  A path $f: [0,1] \to X$ is \emph{reduced} if it is not reducible.  A \emph{reduced representative} of a path $g$ is any reduced path homotopic relative to endpoints to $g$.  A constant path is, by definition, reduced and it is immediate that every reparametrization  of a reduced path is reduced.
 \end{defn}

 Curtis and Fort proved the following theorem in \cite[Lemma 3.1]{CurtisFort59} and another proof can be found in \cite[Theorem 3.9]{cc3}.

\begin{thm} \label{reducedloop-thm}
Let $f:[0,1]\to X$ be a path in a one-dimensional space $X$.  Then there exists an open set $\mathcal U\subset[0,1]$ such that $f|_{\cl(U)}$ is a nullhomotopic loop for each component $U$ of $\mathcal U$ and the path obtained by replacing $f|_{\cl(U)}$ with the constant path for each component $U$ of $\mathcal U$ is a reduced representative of $f$. In addition, any reduced representative of $f$ is a reparametrization of this path.
\end{thm}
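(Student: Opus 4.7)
The plan is to let $\mathcal U$ be the union of all open subsets of $[0,1]$ with the required componentwise nullhomotopy property, show this $\mathcal U$ itself has the property, and then exploit maximality to deduce both that the collapsed path $\tilde f$ is reduced and that any reduced representative of $f$ is a reparametrization of $\tilde f$. The main difficulty lies in the first step: closure of the nullhomotopy condition under taking unions.

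Concretely, let $\mathcal F$ denote the family of open $V \subset [0,1]$ with $f|_{\cl(W)}$ a nullhomotopic loop at every component $W$ of $V$, and set $\mathcal U = \bigcup \mathcal F$. To verify $\mathcal U \in \mathcal F$, fix a component $(a,b)$ of $\mathcal U$ and write it as an increasing union $(a,b) = \bigcup_n (a_n, b_n)$ with each $[a_n, b_n]$ contained in a single component of some $V_n \in \mathcal F$; then each $f|_{[a_n, b_n]}$ is a nullhomotopic loop, and continuity forces $f(a) = f(b)$, so $f|_{[a,b]}$ is a loop.

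The main obstacle is to conclude that $f|_{[a,b]}$ itself is nullhomotopic. My approach would be to observe that $f|_{[a,b]}$ is homotopic rel endpoints to the loop $\gamma_n = f|_{[a,a_n]} \cdot c_{f(a_n)} \cdot f|_{[b_n, b]}$ obtained by collapsing $f|_{[a_n, b_n]}$ to a constant via a chosen nullhomotopy, and that the $\gamma_n$ converge uniformly to the constant loop at $f(a)$; one then invokes that in a one-dimensional space no non-trivial homotopy class admits representatives approaching the constant loop in the uniform topology, a manifestation of the rigidity of one-dimensional path spaces developed in \cite{cc3}. Alternatively, one can paste the individual nullhomotopies together on a sequence of shrinking annular subrectangles of $[a,b] \times [0,1]$, using one-dimensionality of $X$ to control the limit behavior near the accumulation corners.

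With $\mathcal U \in \mathcal F$ secured, let $\tilde f$ be obtained by replacing $f|_{\cl(U)}$ with the appropriate constant on each component $U$ of $\mathcal U$. If $\tilde f$ were reducible on an open $W \subset [0,1]$, combining the nullhomotopy of $\tilde f|_{\cl(W)}$ with nullhomotopies on the components of $\mathcal U$ meeting $W$ would produce an open set strictly larger than $\mathcal U$ still lying in $\mathcal F$, a contradiction with maximality; hence $\tilde f$ is reduced. For the uniqueness statement, given any reduced representative $g$ of $f$, both $g$ and $\tilde f$ are reduced paths that are homotopic rel endpoints, and the rigidity of reduced paths in one-dimensional spaces (no cancellable subloops in either) then yields a monotone reparametrization $h: [0,1] \to [0,1]$ with $\tilde f = g \circ h$, constructed by matching preimages of small enough open neighborhoods in $X$ along a chosen homotopy.
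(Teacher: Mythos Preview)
The paper does not prove this theorem; it is quoted from \cite[Lemma 3.1]{CurtisFort59} with an alternate proof cited as \cite[Theorem 3.9]{cc3}. So there is no in-paper argument to compare your proposal against.

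That said, your proposal has a genuine gap at the very first step: the family $\mathcal F$ is \emph{not} closed under arbitrary unions, so $\mathcal U=\bigcup\mathcal F$ need not belong to $\mathcal F$. Take $X=[0,1]$ and let $f:[0,1]\to X$ be the piecewise-linear path $0\to 1\to 0\to 1$ with turns at $t=\tfrac13,\tfrac23$. Then $(0,\tfrac23)\in\mathcal F$ and $(\tfrac13,1)\in\mathcal F$, so $\mathcal U\supset(0,1)$; but $f(0)=0\neq 1=f(1)$, so $f|_{[0,1]}$ is not even a loop and $(0,1)\notin\mathcal F$. Your exhaustion by intervals $(a_n,b_n)$ cannot be carried out here: any closed interval contained in a single component of some $V\in\mathcal F$ has length strictly less than $\tfrac23$, so no increasing family of such intervals fills out $(0,1)$. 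Even where the claimed inclusion holds, $f|_{[a_n,b_n]}$ is only a subarc of a nullhomotopic loop, not itself a loop; neither ``each $f|_{[a_n,b_n]}$ is a nullhomotopic loop'' nor ``continuity forces $f(a)=f(b)$'' is justified.

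The natural repair is to seek a \emph{maximal} element of $(\mathcal F,\subset)$ via Zorn's lemma rather than the (nonexistent) maximum. For a chain $\{V_\lambda\}$ in $\mathcal F$, every component of $\bigcup_\lambda V_\lambda$ genuinely is an increasing union of components of the $V_\lambda$ (nestedness is essential here and fails for arbitrary families), and then your limit argument---collapse each $f|_{[a_n,b_n]}$, let $n\to\infty$, and invoke that one-dimensional spaces are homotopically Hausdorff---does show the union lies in $\mathcal F$. A maximal $\mathcal U$ then yields a reduced $\tilde f$ by exactly the contradiction you sketched. The uniqueness clause you defer to is a separate, nontrivial fact about one-dimensional spaces; citing \cite{cc3} for it is reasonable, but the one-line sketch you give does not constitute a proof.
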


\begin{defn}
    Let $\mathcal U$ be an open set satisfying the conclusion of Theorem \ref{reducedloop-thm} for $f$.  If  no two components of $\mathcal U$ share an endpoint, we will say that $\mathcal U$ is an \emph{open cancellation for $f$}.  The proof of Theorem \ref{reducedloop-thm} in \cite{cc3} shows that an open cancellation exists for every path in a one-dimensional space.
\end{defn}

The following lemma is used to construct homotopies by modifying a path on a sequence of subpaths.

\begin{lem}\label{aalem1}
Let $H:I\times I \to Z$ be a function into a metric space $Z$ such that $H|_{I\times\{0,1\}}$ is continuous.  Let $\bigl\{(a_i,b_i)\bigr\}$ be a set of disjoint open intervals in $I$ such  that $\diam\bigl(H([a_i,b_i]\times I)\bigr)$  converges to $0$.  Suppose that $H$ is constant on  $\{t\}\times I$  for $t\not\in\cup_i(a_i,b_i)$ and $H$ is continuous on each $[a_i,b_i]\times I$.  Then $H$ is continuous. \end{lem}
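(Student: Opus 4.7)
The plan is to verify continuity of $H$ at an arbitrary point $(t,s)\in I\times I$, splitting into two cases according to whether $t$ lies in some open interval $(a_i,b_i)$ or not. If $t\in(a_i,b_i)$, the point $(t,s)$ is interior to the strip $[a_i,b_i]\times I$ on which $H$ is continuous by hypothesis, so any small enough neighborhood of $(t,s)$ is contained in that strip and the desired estimate is immediate.

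All of the work is in the case $t\notin\cup_i(a_i,b_i)$, where $H(t,\cdot)$ is constant at some value $z$. Given $\epsilon>0$, I would assemble three ingredients. First, continuity of $H$ on $I\times\{0,1\}$ yields $\delta_0>0$ with $d(H(t',r),z)<\epsilon/3$ whenever $|t'-t|<\delta_0$ and $r\in\{0,1\}$. Second, let $J$ be the finite set of indices $i$ for which $\diam(H([a_i,b_i]\times I))\geq\epsilon/3$. Third, for each $i\in J$, choose $\delta_i>0$ either by continuity of $H|_{[a_i,b_i]\times I}$ at $(t,s)$ when $t\in[a_i,b_i]$, or as the positive number $d(t,[a_i,b_i])$ otherwise. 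Set $\delta=\min\{\delta_0,\min_{i\in J}\delta_i\}$, which is positive since $J$ is finite.

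I would then check that any $(t',s')$ within $\delta$ of $(t,s)$ has $d(H(t',s'),z)<\epsilon$ by a three-way case analysis on $t'$. If $t'\notin\cup_i(a_i,b_i)$, then $H(t',s')=H(t',0)$ by the constancy hypothesis, and the choice of $\delta_0$ does it. If $t'\in(a_i,b_i)$ with $i\in J$, the choice of $\delta\leq\delta_i$ forces $t\in[a_i,b_i]$ (otherwise $d(t,[a_i,b_i])>|t'-t|$, contradicting $t'\in[a_i,b_i]$), and continuity on the closed strip gives the bound. If $t'\in(a_i,b_i)$ with $i\notin J$, then either $t$ is already an endpoint of $[a_i,b_i]$, in which case constancy at $t$ plus the small diameter suffices, or the nearer endpoint of $[a_i,b_i]$ to $t$ lies within $\delta\leq\delta_0$ of $t$; combining the boundary estimate at that endpoint, constancy along its vertical fiber, and the small strip diameter yields $d(H(t',s'),z)<2\epsilon/3$.

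The only real obstacle is bookkeeping: the finitely many large-diameter strips indexed by $J$ must be handled individually, either via the hypothesized continuity on the closed strip or via positive distance from $t$, while the remaining infinitely many small-diameter strips must be controlled uniformly by transporting the top/bottom edge estimate along the constant vertical fibers at their endpoints. Once $J$ is isolated using the $\diam\to 0$ hypothesis, the patching is routine and I expect no further conceptual difficulty.
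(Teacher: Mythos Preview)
Your proof is correct and follows essentially the same approach as the paper's: both handle $t\in(a_i,b_i)$ trivially via continuity on the strip, and for $t\notin\cup_i(a_i,b_i)$ both transport the edge estimate $H|_{I\times\{0,1\}}$ along the constant vertical fiber at a nearby interval endpoint and absorb the error using the small-diameter hypothesis. The only difference is presentational: the paper argues via sequences and subsequences, while you work directly in $\epsilon$--$\delta$ and explicitly isolate the finite set $J$ of large-diameter strips; this last step makes your treatment of the case where $t$ is an endpoint of some $[a_i,b_i]$ with large diameter slightly more explicit than the paper's.
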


\begin{proof}
Consider a sequence $(x_n, y_n) \to (x_0,y_0)$ and let $\mathcal U = \bigcup\limits_i(a_i,b_i)$.  If $x_0\in \mathcal U $, then eventually $(x_n,y_n)\subset (a_i,b_i)\times I$ for some $i$.  Hence $H(x_n,y_n)$ converges to $H(x_0,y_0)$, since $H$ is continuous on each $[a_i,b_i]\times I$.  Thus we may assume that $x_0\not\in \mathcal U$.

Suppose that $x_{n_m}$ is a subsequence of $(x_n)$ such that $x_{n_m}\not\in \mathcal U$ for all $m$.  Then $H(x_{n_m}, y_{n_m})$ converges to $H(x_0,y_0)$ since $H$ is constant on $\{t\}\times I$ for $t\not\in \mathcal U$ and $H|_{I\times \{0,1\}}$ is continuous.

Suppose that $x_{n_m}$ is a subsequence of $(x_n)$ such that $x_{n_m}\in (a_{i_m}, b_{i_m})$ for all $m$.  Then there exists $c_m\in\{a_{i_m},b_{i_m}\}$ such that $\d(x_{n_m}, x_0) \geq \d(c_m, x_0)$. Hence $(c_m, y_{n_m})$ converges to $(x_0, y_0)$.  Applying the previous paragraph, we see that $H(c_m, y_{n_m})$ converges to $H(x_0, y_0)$.  Since $\diam\bigl(H([a_i,b_i]\times I)\bigr)$  converges to $0$, we see that $\d\bigl(H(c_m, y_{n_m}),H( x_{n_m}, y_{n_m}) \bigr)$ also converge to $0$.  Thus $H(x_{n_m}, y_{n_m})$ converges to $H(x_0,y_0)$.  Therefore $H$ is continuous. \end{proof}

The following is an consequence  of \cite[Lemma 13]{FischerZastrow2005}.

\begin{lem}\label{cutoff}
Suppose that $f:\mathbb S^1 \to X$ is a nullhomotopic loop in a planar set $X$.  Then $f$ is nullhomotopic in the $B^X_{r}\bigl(f(0)\bigr)$ for every $r> \diam\bigl(\im{(f)}\bigr)$.
\end{lem}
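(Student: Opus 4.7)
The strategy is to deduce the statement directly from \cite[Lemma 13]{FischerZastrow2005}, which already contains all the planar topology needed. That lemma provides a ``cutoff'' mechanism: if a loop in a planar continuum $X$ is nullhomotopic in $X$ and its image is contained in an open subset $U$ of $\mathbb{R}^2$, then the loop is already nullhomotopic in $U\cap X$. Once this is cited, our lemma is a one-line specialization to a round open ball.

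Concretely, I would first fix $r>\diam\bigl(\im(f)\bigr)$ and set $D=\diam\bigl(\im(f)\bigr)$. For every $y\in\im(f)$ one has $d\bigl(f(0),y\bigr)\leq D<r$, so
\[
\im(f)\subset B_r^{\mathbb R^2}\bigl(f(0)\bigr).
\]
Taking the open set $U=B_r^{\mathbb R^2}\bigl(f(0)\bigr)$, the hypotheses of the Fischer--Zastrow cutoff lemma are met: $X$ is planar, $f$ is nullhomotopic in $X$ by assumption, and $\im(f)\subset U$ by the display above. The conclusion is that $f$ is nullhomotopic in
\[
U\cap X = B_r^{\mathbb R^2}\bigl(f(0)\bigr)\cap X = B_r^X\bigl(f(0)\bigr),
\]
where the last equality uses the convention on balls in planar sets given in the definitions section.

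The only real work in this argument is hidden in the Fischer--Zastrow lemma, and this is where I would expect the main obstacle to lie if one had to reprove the statement from scratch. In that case one would start with a disk-filling $H\colon\mathbb D\to X$ of $f$, examine the components of the open set $H^{-1}(X\setminus U)$, and argue that each component maps to a loop near $\partial U$ that can be replaced by a map into $U\cap X$ using the structure of $\mathbb{R}^2\setminus X$. This is precisely where planarity is essential, and precisely the content of \cite[Lemma 13]{FischerZastrow2005}. Since that lemma is already in the literature, no reproof is needed here and the appeal is clean.
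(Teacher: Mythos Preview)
Your proposal is correct and matches the paper's approach exactly: the paper does not give a proof of this lemma at all but simply states that it is a consequence of \cite[Lemma 13]{FischerZastrow2005}, which is precisely the cutoff mechanism you invoke. Your specialization to the open ball $U=B_r^{\mathbb R^2}\bigl(f(0)\bigr)$ is the intended one-line deduction.
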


The following lifting lemma was originally proved in \cite{ConnerKentpreprint}.  We reproduce a proof here for completeness.

\begin{lem}[Reduced Path Lifting] \label{kreduced}
Let $X$ be a planar continuum and $k$ a line in the plane.  For every path $\alpha:I\to X$ there exists a path $\tilde \alpha:I\to X$ homotopic to $\alpha$ such  that $\pi_k\circ \tilde \alpha $  is  a reduced representative of $\pi_k\circ \alpha$.
\end{lem}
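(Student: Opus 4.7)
The plan is to build $\tilde\alpha$ by replacing $\alpha$ on each component $(a_i, b_i)$ of an open cancellation of $\pi_k\circ\alpha$ with a constant-speed linear path along the fiber of $\pi_k$ containing $\alpha(a_i)$ and $\alpha(b_i)$. First I would apply Theorem~\ref{reducedloop-thm} to $\pi_k\circ\alpha : I \to X_k$ to obtain an open cancellation $\mathcal U = \bigsqcup_i (a_i, b_i)$ such that $(\pi_k\circ\alpha)|_{[a_i,b_i]}$ is a nullhomotopic loop in $X_k$ for every $i$. In particular $\pi_k(\alpha(a_i)) = \pi_k(\alpha(b_i))$, so $\alpha(a_i)$ and $\alpha(b_i)$ share a fiber of $\pi_k$, which by definition is either a single point or a maximal line segment in $X$ parallel to $k$. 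I then define $\tilde\alpha$ to agree with $\alpha$ on $I\setminus\mathcal U$ and to be the constant-speed linear parametrization of the (possibly degenerate) segment from $\alpha(a_i)$ to $\alpha(b_i)$ on each $[a_i,b_i]$.

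To verify that $\pi_k\circ\tilde\alpha$ is a reduced representative, observe that $\pi_k\circ\tilde\alpha$ is constant on each $[a_i,b_i]$ (since $\tilde\alpha$ lies in a single fiber there) and agrees with $\pi_k\circ\alpha$ outside $\mathcal U$; Theorem~\ref{reducedloop-thm} then identifies this as a reduced representative of $\pi_k\circ\alpha$. Continuity of $\tilde\alpha$ on $I\setminus\mathcal U$ follows from uniform continuity of $\alpha$ together with the fact that any disjoint countable family of open intervals in $I$ has lengths tending to zero: if $a_{i_n}\to t_0$, then $\diam(\alpha([a_{i_n},b_{i_n}])) \to 0$, and the segment of $\tilde\alpha$ on $[a_{i_n},b_{i_n}]$ has diameter bounded by $\d(\alpha(a_{i_n}),\alpha(b_{i_n}))$. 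Because $\mathcal U$ is an open cancellation, no two components share an endpoint, so the one-sided limits at any $a_i$ or $b_i$ also agree.

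For the homotopy $\alpha\simeq\tilde\alpha$, let $\gamma_i := \alpha|_{[a_i,b_i]} \cdot (\tilde\alpha|_{[a_i,b_i]})^{-1}$, a loop in $X$ based at $\alpha(a_i)$. Since $\pi_k\circ\tilde\alpha|_{[a_i,b_i]}$ is constant, $\pi_k\circ\gamma_i$ is homotopic rel basepoint to $(\pi_k\circ\alpha)|_{[a_i,b_i]}$, which is nullhomotopic in $X_k$; injectivity of $(\pi_k)_\ast$ then forces $\gamma_i$ to be nullhomotopic in $X$, producing a homotopy $H_i : [a_i,b_i]\times I \to X$ rel endpoints from $\alpha|_{[a_i,b_i]}$ to $\tilde\alpha|_{[a_i,b_i]}$. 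Applying Lemma~\ref{cutoff} I may arrange that $\im(H_i) \subset B^X_{r_i}(\alpha(a_i))$ with $r_i$ just above $\diam(\im \gamma_i) \le 2\diam(\alpha([a_i,b_i]))$, so in particular $\diam(\im H_i) \to 0$. Defining $H : I\times I \to X$ to equal $H_i$ on each $[a_i,b_i]\times I$ and $H(t,s) := \alpha(t)$ for $t\notin\mathcal U$, Lemma~\ref{aalem1} immediately gives continuity of $H$, yielding the desired homotopy.

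The main technical obstacle is coordinating everything at accumulation points of the endpoints $\{a_i,b_i\}$: one must control simultaneously the diameters of the segments $\tilde\alpha([a_i,b_i])$ and of the local nullhomotopies $H_i$ so that both $\tilde\alpha$ and $H$ are continuous. Lemmas~\ref{cutoff} and~\ref{aalem1} are precisely designed for these tasks, and injectivity of $(\pi_k)_\ast$ is what produces the local nullhomotopies in the first place.
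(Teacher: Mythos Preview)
Your argument is correct and follows essentially the same route as the paper: take an open cancellation for $\pi_k\circ\alpha$, replace $\alpha$ on each component by the straight segment in the corresponding fiber, use injectivity of $(\pi_k)_*$ to get local nullhomotopies, control their diameters via Lemma~\ref{cutoff}, and assemble the global homotopy with Lemma~\ref{aalem1}. The only differences are cosmetic---you spell out the continuity of $\tilde\alpha$ separately (which also follows once $H$ is continuous) and phrase the diameter bound slightly differently.
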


The induced map $\pi_{k*}$ on fundamental groups, in general, will not be surjective.  Thus, the lemma says that if the decomposition map hits a path, then one can lift a reduced representative of the path class.

\begin{proof}
    If $\pi_k\circ \alpha$ is reduced, we are done.  Otherwise, there exists an interval $[c,d]$ such that $\pi_k\circ \alpha\bigl|_{[c,d]}$ is a nonconstant nullhomotopic loop.  Then $\pi_k\circ \alpha(c) = \pi_k\circ \alpha(d)$ which implies that the line segment $\overline{\alpha(c)\alpha(d)}$ is contained in $X$.  Let $l$ be a parametrization of the line segment from $\alpha(c)$ to $\alpha(d)$.  The loop $\alpha\bigl|_{[c,d]} * \overline{l}$ maps to $\pi_k\circ \alpha\bigl|_{[c,d]}$ and hence must be nullhomotopic since $\pi_{k*}$ is injective. Therefore $\alpha$ is homotopic to $ \alpha'$ where the subpath $\alpha\bigl|_{[c,d]}$ is replaced by the path $l$.

    Let $\mathcal U $ be an open cancellation for $\pi_k\circ \alpha$ and $\bigl\{(c_i,d_i)\bigr\}_{i\in J}$ be the components of $\mathcal U$.  For each $i\in J$, let $l_i:[c_i,d_i] \to X$ be a parametrization of the line segment from $\alpha(c_i)$ to $\alpha(d_i)$.  Let  $\tilde \alpha$ be the path obtained by, for each $i\in J$, replacing the subpath $\alpha|_{[c_i,d_i]}$ by $l_i$. Then $\tilde \alpha$ defines a continuous path such that $\alpha(t) =\tilde \alpha(t)$ for $t\not\in \mathcal U$.

    We need to show that $\alpha$ is homotopic to $\tilde \alpha$.  Since $\alpha$ is uniformly continuous, $\diam{\bigr(\alpha \bigl|_{[c_i,d_i]}\bigl )}$ must converge to zero.

    As explained in the first paragraph of the proof, there exists continuous maps $H_i:[c_i,d_i]\times I \to X$ such that $H_i\bigr|_{[c_i,d_i]\times \{0\}} = \alpha\bigr|_{[c_i,d_i]}$,  $H_i\bigr|_{[c_i,d_i]\times\{1\}}=l_i$, and $H_i(c_i, t) = \alpha(c_i)$, $H_i(d_i,t) = \alpha(d_i)$.  By Lemma \ref{cutoff}, we may assume that $\diam{\bigl(H_i(I\times[c_i,d_i])\bigr)} \to 0$.  Define $H:I \times I \to X$ by $H(s,t) = H_i(s,t)$ if $s\in [c_i,d_i]$ and $H(s,t) = \alpha(s)$ otherwise.

    Lemma \ref{aalem1} implies that $H$ is continuous and thus $\alpha$ is homotopic to $\tilde \alpha$.  By our choice of $\mathcal U$, $\pi_k\circ\tilde \alpha$ is reduced which completes the proof.
\end{proof}

The following is a consequence of the previous proof, which we will formally state for later reference.

\begin{lem}\label{shortening}
    Let $\alpha: I\to X$ be a path into a planar continuum. Then $\alpha$ is homotopic to the line segment between its endpoints if and only if $\pi_k\circ\alpha$ is nullhomotopic where $k$ is a line passing through the endpoints of $\alpha$.
\end{lem}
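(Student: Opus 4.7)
The plan is to handle the two directions separately. The only-if direction is essentially immediate, while the if direction is where Lemma \ref{kreduced} does the real work.

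For the only-if direction, I would observe that since $k$ passes through both $\alpha(0)$ and $\alpha(1)$, the line segment $l$ between them lies on $k$, and hence must be contained in some maximal line segment of $X$ parallel to $k$ (such a segment exists in $X$ because the hypothesis $\alpha\simeq l$ forces $l\subset X$). That maximal segment is precisely one decomposition element of $\pi_k$, so $\pi_k\circ l$ is a constant path, and therefore $\pi_k\circ\alpha\simeq\pi_k\circ l$ is nullhomotopic.

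For the if direction, I would first apply Lemma \ref{kreduced} to produce a path $\tilde\alpha\simeq\alpha$ such that $\pi_k\circ\tilde\alpha$ is a reduced representative of $\pi_k\circ\alpha$. By hypothesis the latter is nullhomotopic, and the constant loop at $\pi_k(\alpha(0))$ is itself a reduced representative of that trivial class; the uniqueness clause in Theorem \ref{reducedloop-thm} then forces $\pi_k\circ\tilde\alpha$ to be constant. Consequently $\tilde\alpha$ takes values in a single fiber of $\pi_k$, namely a maximal line segment of $X$ parallel to $k$. Because this segment contains both endpoints of $\alpha$, which lie on $k$, the segment sits along $k$ and in particular contains $l$.

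The final step is to note that $\tilde\alpha$, being a path inside a convex subset of $\mathbb R^2$, is homotopic rel endpoints to $l$ via the straight-line homotopy, which stays inside the segment by convexity. Chaining $\alpha\simeq\tilde\alpha\simeq l$ completes the argument. The main technical ingredient, and essentially the only nontrivial one, is the invocation of Lemma \ref{kreduced}: without the ability to replace $\alpha$ by $\tilde\alpha$ one would have a nullhomotopy only in the quotient $X_k$ and no obvious way to promote it to a homotopy in $X$ that cooperates with the straight-line segment $l$.
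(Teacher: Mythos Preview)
Your argument is correct and follows the same line as the paper, which records this lemma as a direct consequence of the proof of Lemma~\ref{kreduced}. The only difference is cosmetic: the paper's implicit argument applies the injectivity of $\pi_{k*}$ once to the loop $\alpha * \overline{l}$ (exactly as in the first paragraph of the proof of Lemma~\ref{kreduced}), whereas you invoke the full conclusion of Lemma~\ref{kreduced} to obtain a $\tilde\alpha$ with constant $\pi_k$-image and then finish with a straight-line homotopy inside the fiber---a slightly longer path to the same destination.
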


\begin{defn}
    If $\alpha$ maps to a reduced path under $\pi_k$, we will say that $\alpha$ is \emph{reduced} with respect to $k$ or that $\alpha$ is \emph{$k$-reduced}.  For any path $\alpha$; if $\tilde \alpha$ is obtained from $\alpha$ as in Lemma \ref{kreduced}, i.e. $\tilde \alpha$ is obtained by making $\alpha$ linear on the components of an open cancelation of $\pi_k\circ\alpha$, then we will say $\tilde \alpha$ is obtained by \emph{reducing $\alpha$ with respect to $k$.}
\end{defn}

\section{Total oscillation function}

The following definition was originally given by Cannon, Conner, and  Zastrow in \cite{ccz}.  While we will follow the convention of \cite{ccz} and define the total oscillation function on any subset of $\mathbb R^n$, our primary interest will be the total oscillation function for planar sets and one-dimensional subsets of $\mathbb R^3$.

\begin{defn}[Oscillation]
Let $g: \mathbb R^n \to [-1,1]$ be given by


$$g(x_1, \cdots, x_n) = \left.\begin{cases}   ~-1 &\mbox{ if } ~~~~~~~~~x_1\leq -3 \\ \frac12(x_1+1) &\mbox{ if } -3\leq x_1\leq -1 \\ ~~~~0 &\mbox{ if } -1\leq x_1\leq 1\\  \frac12(x_1-1) &\mbox{ if } ~~~~1\leq x_1\leq 3  \\ ~~~~1 &\mbox{ if } ~~~~3\leq x_1 \end{cases}\right\}.$$

For any pair of parallel hyperplanes $(P,Q)$ in $\mathbb R^n$, there exists a homeomorphism $h_{(P,Q)}$ which is a composition of a dilation, a rotation, and a translation that sends $P$ to the hyperplane $\{x_1=-3\}$ and the $Q$ to the hyperplane  $\{x_1=3\} $.   Define $g_{(P,Q)}: \mathbb R^n \to [-1,1]$ by $g_{(P,Q)} =g\circ h_{(P,Q)}$.  While $h_{(P,Q)}$ is not uniquely defined, $g_{(P,Q)}$ is independent of the choice of the affine homeomorphism $h_{(P,Q)}$.   We will at times refer to $g_{(P,Q)}^{-1}(0)$ as the \emph{middle third of $(P,Q)$}.

The function $g_{(P,Q)}$ will be used to count \emph{continuously} the number of times a path oscillates between the hyperplanes $P$ and $Q$.  Given a continuous function $f:[a,b]\to \mathbb R^n$, define the \emph{oscillation} $o\bigl(f,(P,Q)\bigr)$ of $f$ with respect to the pair $(P,Q)$ to be supremum of the sum

$$-\sum\limits_{i=1}^k g_{(P,Q)} \bigl(f(a_{i-1})\bigr)\cdot g_{(P,Q)}\bigl(f(a_i)\bigr)$$

where the supremum is taken over all finite collections of points $a\leq a_0\leq a_1\leq\cdots\leq a_k\leq b$.  By way of convention, we will always take the sum to be 0 when considering collections containing 0 points or 1 point.   Thus $o\bigl(f, (P,Q)\bigr)$ is always non-negative.

\end{defn}

\begin{prop}\label{three parameters}[\cite{ccz}]
    The three parameters $f$, $P$, and $Q$ can all be parameterized by the compact open topology.  Then $o\bigl(f,(P,Q)\bigr)$ is a continuous function of all three parameters $f$, $P$, and $Q$. There is a finite collection of points $\{a_i\}$ for which the sum realizes the supremum.
\end{prop}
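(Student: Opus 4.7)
The plan has three steps: (i) establish joint continuity of $g_{(P,Q)}$, and hence of each finite sum $S_\Pi$; (ii) prove that the supremum is finite and attained by a finite partition, using uniform continuity of $g_{(P,Q)}\circ f$ on $[a,b]$; and (iii) deduce continuity of $o$ via a local bound on the cardinality of an optimal partition.

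For step (i), since $h_{(P,Q)}$ is an affine homeomorphism depending continuously on the pair $(P,Q)$ of parallel hyperplanes, and $g$ itself is continuous, one obtains joint continuity of $(x,P,Q)\mapsto g_{(P,Q)}(x)$. As evaluation $f\mapsto f(a_i)$ is continuous in the compact-open topology, each finite sum
$$S_\Pi(f,P,Q)=-\sum_{i=1}^k g_{(P,Q)}(f(a_{i-1}))g_{(P,Q)}(f(a_i))$$
is jointly continuous in $(f,P,Q)$. In particular $o$ is a supremum of continuous functions, hence lower semicontinuous.

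For step (ii), let $\psi(t):=\bigl(h_{(P,Q)}(f(t))\bigr)_1$, so that $\phi:=g_{(P,Q)}\circ f$ depends only on $\psi$ and vanishes wherever $|\psi(t)|\le 1$. Uniform continuity of $\psi$ on the compact interval $[a,b]$ yields $\delta>0$ such that $|s-t|<\delta$ implies $|\psi(s)-\psi(t)|<2$, so on any subinterval of length $\le\delta$, $\psi$ cannot cross the strip $(-1,1)$ entirely and $\phi$ cannot change strict sign. Therefore $\phi$ has at most $N:=\lceil(b-a)/\delta\rceil$ strict sign changes, decomposing $[a,b]$ into at most $N+1$ super-blocks of fixed non-strict $\phi$-sign. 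Only consecutive samples of strictly opposite sign can contribute positively to $S_\Pi$; each such pair consumes one of the $N$ sign changes with contribution at most $1$, so $o(f,(P,Q))\le N<\infty$. For attainment, within each super-block of sign $\epsilon_s$ and $|\phi|$-maximum $M_s$ the contribution of the samples located there is linear in each sample value $u\in\epsilon_s\cdot[0,M_s]$ when the other sample values are fixed, so its maximum is attained at corners $\{0,\epsilon_s M_s\}$; a short case analysis then shows one never needs more than two samples per super-block (one at a $|\phi|$-extremum and, if useful, one at a point where $\phi$ vanishes). This gives a uniform bound of $2(N+1)$ on the cardinality of an optimal partition, and compactness of the simplex of partitions of that size combined with continuity of $S_\Pi$ produces an attaining partition.

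For step (iii), fix $(f_0,P_0,Q_0)$ and let $N_0$ be the bound from step (ii) for $\psi_0$. Joint continuity established in step (i) implies that $(f,P,Q)\mapsto \psi$ is continuous into the space of continuous functions on $[a,b]$ with the uniform norm; hence a neighborhood $U$ of $(f_0,P_0,Q_0)$ admits the same modulus $\delta_0$ after slight shrinking, so for every $(f,P,Q)\in U$ the supremum is realized by a partition of cardinality at most $2(N_0+1)$. On $U$ we therefore have
$$o(f,(P,Q))=\max_{|\Pi|\le 2(N_0+1)}S_\Pi(f,P,Q),$$
a maximum of a continuous function over the compact set of partitions of cardinality at most $2(N_0+1)$, continuous in $(f,P,Q)$ by Berge's Maximum Theorem. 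Combined with lower semicontinuity from step (i), this gives continuity of $o$. The main obstacle is the attainment argument in step (ii): one must carefully analyze how extra same-sign samples in a super-block interact with the adjacent contributions and verify that no partition needs more than two samples per super-block.
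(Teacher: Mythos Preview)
The paper does not prove this proposition; it is quoted from \cite{ccz}, and the only justification given in the text is the single sentence following the statement: ``The finiteness of the collection follows from the fact that a continuous path can only go between two disjoint closed half-spaces finitely many times.'' This is exactly your step~(ii) observation that $\phi=g_{(P,Q)}\circ f$ has only finitely many strict sign changes, since a sign change forces $h_{(P,Q)}\circ f$ to cross from $\{x_1\ge 1\}$ to $\{x_1\le -1\}$ or vice versa. So on the only point the paper addresses, your argument and the paper's remark coincide.

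Your overall strategy---joint continuity of each $S_\Pi$, a locally uniform bound on the size of an optimal partition, then Berge's Maximum Theorem---is sound and gives an honest proof where the paper simply cites one. The one place that needs tightening is the reduction to ``at most two samples per super-block.'' Your linearity-in-each-coordinate observation is correct and shows an optimal choice can be taken at extreme values, but the case analysis you allude to is not quite as short as suggested: for instance, inserting a sample with $\phi=0$ between two same-sign samples can strictly increase $S_\Pi$ (it kills a negative cross-term), so one must argue that after such insertions and deletions the total count is still controlled by the number $N$ of sign changes. A cleaner route is to show directly that for any partition $\Pi$ one can produce $\Pi'$ with at most $N+2$ points and $S_{\Pi'}\ge S_\Pi$, by keeping only one $|\phi|$-maximizer per sign block together with the two endpoints; the verification that dropping extra same-sign points never decreases the sum is a short inequality. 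Either way, what matters for step~(iii) is only that \emph{some} bound depending on $N$ exists, and that $N$ is locally stable in $(f,P,Q)$ because the modulus of continuity of $\psi$ is; you have both of these.
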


The finiteness of the collection follows from the fact that a continuous path can only go between two disjoint closed half-spaces finitely many times.    We state the following immediate corollary for compact subsets of $\mathbb R^n$.

\begin{cor}\label{small partial oscillation difference}
    Let $X$ be a compact subset of $\mathbb R^n$.  For every  path $\alpha:[0,1]\to X$ and $\epsilon>0$; there exists a $\delta>0$ such that if  $\beta: [0,a]\to X$ is a path such that $\d\bigl(\alpha(t),\beta(t)\bigr)< \delta$ for all $t$ then $\bigl|o\bigl(\alpha,(P,Q)\bigr)- o\bigl(\beta,(P,Q)\bigr)\bigr|<\epsilon$
\end{cor}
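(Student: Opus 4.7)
The plan is to derive this bound directly from the joint continuity asserted in Proposition \ref{three parameters}. Since $[0,1]$ is compact and $\mathbb R^n$ is metric, the compact-open topology on $C([0,1],\mathbb R^n)$ coincides with the topology of uniform convergence induced by the metric $d_\infty(f,g) = \sup_{t} \d\bigl(f(t),g(t)\bigr)$. Thus continuity of $f \mapsto o\bigl(f,(P,Q)\bigr)$ at $\alpha$ in the compact-open topology is the same as continuity at $\alpha$ in $d_\infty$, which is precisely what the corollary asserts (with the hypothesis $\d(\alpha(t),\beta(t))<\delta$ for all $t$ being $d_\infty(\alpha,\beta)<\delta$). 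I interpret $\beta:[0,a]\to X$ as a typo for $\beta:[0,1]\to X$, so that the uniform estimate makes sense.

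First, I would fix the pair $(P,Q)$ and apply Proposition \ref{three parameters} to obtain a compact-open neighborhood $\mathcal V$ of $\alpha$ in $C([0,1],\mathbb R^n)$ such that
\[
\bigl|o\bigl(\alpha,(P,Q)\bigr) - o\bigl(\beta,(P,Q)\bigr)\bigr| < \epsilon \qquad \text{for all } \beta\in\mathcal V.
\]
Shrinking $\mathcal V$ if necessary, I may assume it has the standard subbasic form $\mathcal V = \bigcap_{j=1}^m \{f : f(K_j)\subset U_j\}$ for compact $K_j\subset[0,1]$ and open $U_j\subset\mathbb R^n$ with $\alpha(K_j)\subset U_j$.

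Next I would set
\[
\delta := \min_{1\le j\le m} \d\bigl(\alpha(K_j),\, \mathbb R^n\setminus U_j\bigr),
\]
which is strictly positive because each $\alpha(K_j)$ is compact and disjoint from the closed complement $\mathbb R^n\setminus U_j$. For any path $\beta:[0,1]\to X$ with $\d\bigl(\alpha(t),\beta(t)\bigr)<\delta$ for every $t$, and any $t\in K_j$, the point $\beta(t)$ lies within $\delta$ of $\alpha(t)\in\alpha(K_j)$ and therefore belongs to $U_j$. Hence $\beta\in\mathcal V$, and the claimed inequality follows from our choice of $\mathcal V$.

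There is no real obstacle here: the content is almost entirely absorbed by Proposition \ref{three parameters}. The only thing to verify is the translation from compact-open neighborhoods to uniform $\delta$-neighborhoods, which is the standard fact that uniform and compact-open topologies agree for continuous maps out of a compact metric space; the short computation with the subbasic sets above makes this explicit.
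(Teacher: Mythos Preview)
Your argument is correct and matches the paper's intent: the paper gives no proof, introducing the statement as an ``immediate corollary'' of Proposition \ref{three parameters}, and your translation from compact-open continuity to a uniform $\delta$-ball is exactly the routine verification that makes this immediate. Your reading of $(P,Q)$ as fixed is consistent with how the corollary is invoked later (in the proof of Theorem \ref{existence}), and your remark that $[0,a]$ should be $[0,1]$ is reasonable. One small observation: you never use the compactness of $X$, and indeed it is not needed for the statement as you have interpreted it; the hypothesis appears to be carried along for context rather than necessity.
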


\begin{lem}\label{line segments}
  Let $X\subset \mathbb R^n$.  Suppose that $\alpha : [0,1]\to X$ is a parametrization of a line segment and $\beta:\bigl([0,1], 0,1\bigr)\to \bigl(X, \alpha(0), \alpha(1)\bigr)$ is any path. Then $o\bigl(\alpha, (P,Q)\bigr) \leq o\bigl(\beta, (P,Q)\bigr )$ for any $(P,Q)$.  If in addition, $\beta$ is not a reparametrization of $\alpha$ there exists $(P,Q)$ such that $o\bigl(\alpha, (P,Q)\bigr) < o\bigl(\beta, (P,Q)\bigr )$.
\end{lem}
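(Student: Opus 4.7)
The plan is to exploit the monotonicity of $g_{(P,Q)}$ along affine lines, together with the linearity of the parametrization $\alpha$, to reduce the oscillation of a line segment to a two-point computation involving only its endpoint $g$-values.

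First I would establish the formula $o(\alpha, (P,Q)) = \max\{0, -u_0 u_1\}$ where $u_i = g_{(P,Q)}(\alpha(i))$. Because $\alpha$ is the linear parametrization of a line segment, the first coordinate after the affine homeomorphism $h_{(P,Q)}$ is linear in the parameter, and $g$ is monotonic in $x_1$, so $g_{(P,Q)} \circ \alpha : [0,1] \to [-1,1]$ is monotonic. A short argument then shows that for any monotonic $\varphi$ with $\varphi(0) = u_0$, $\varphi(1) = u_1$, the supremum of $-\sum \varphi(a_{i-1}) \varphi(a_i)$ equals $\max\{0, -u_0 u_1\}$: when $u_0, u_1$ share a sign every product is nonnegative and the empty partition wins, while in the opposite-sign case one isolates the unique sign-change $v_{j-1} < 0 \leq v_j$ in any partition and checks $-v_{j-1} v_j \leq -u_0 u_1$ using monotonicity, so the two-point partition $\{0, 1\}$ is optimal. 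The first inequality of the lemma then follows immediately: since $\beta$ shares endpoints with $\alpha$, the partition $\{0,1\}$ for $\beta$ already yields $o(\beta, (P,Q)) \geq -u_0 u_1$, and combining with the trivial bound $o(\beta, (P,Q)) \geq 0$ gives $o(\beta, (P,Q)) \geq \max\{0, -u_0 u_1\} = o(\alpha, (P,Q))$.

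For the strict inequality I would split on whether $\beta$ leaves the line $L \subset \mathbb{R}^n$ containing the image of $\alpha$. If some $\beta(t_0) \notin L$, pick a hyperplane $\Pi \supset L$ and take $(P,Q)$ parallel to $\Pi$, with $P$ placed just past $L$ on the side opposite $\beta(t_0)$ and $Q$ placed between $\Pi$ and $\beta(t_0)$, positioned so that under the transformation $L$ maps into $\{x_1 \leq -3\}$ and $\beta(t_0)$ into $\{x_1 \geq 3\}$. Then $g_{(P,Q)} \equiv -1$ on the image of $\alpha$, giving $o(\alpha, (P,Q)) = 0$, while the partition $\{0, t_0, 1\}$ forces $o(\beta, (P,Q)) \geq 2$. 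If instead the image of $\beta$ lies in $L$ but $\beta$ is not a reparametrization of $\alpha$, then the signed arc-length coordinate $s(t)$ of $\beta$ along $L$ (with $s(0) = 0$ and $s(1) = \ell$) fails to be monotonic non-decreasing, so an intermediate value argument produces $t_0 < t_1 < t_2$ with $s(t_1)$ strictly outside the closed interval between $s(t_0)$ and $s(t_2)$. Choosing $(P,Q) \perp L$ whose two outside regions separate $s(t_1)$ from $\{s(t_0), s(t_2)\}$ gives $o(\beta, (P,Q)) \geq 2$ via the partition $\{t_0, t_1, t_2\}$, while $o(\alpha, (P,Q)) \leq 1$ by the first part since $|u_0|, |u_1| \leq 1$.

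The main obstacle is executing the monotonic-path lemma cleanly, in particular verifying that no clever multi-point partition can beat $\{0,1\}$, and arranging the hyperplane placements in the two strict-inequality cases explicitly enough to guarantee that the transformed coordinates really land in the prescribed outside regions; both are elementary but need some care, especially when $\beta$ may exit the endpoints of $\alpha$ while still lying on $L$.
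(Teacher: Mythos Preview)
Your argument is essentially the paper's: the first inequality via the endpoint partition $\{0,1\}$, then two cases for strictness according to whether $\im(\beta)\subset L$. One small slip in your off-line case: if $P$ lies on the side of $\Pi$ \emph{opposite} $\beta(t_0)$ while $Q$ lies between $\Pi$ and $\beta(t_0)$, then $L\subset\Pi$ sits strictly between $P$ and $Q$ and hence lands in $\{-3<x_1<3\}$ under $h_{(P,Q)}$, not in $\{x_1\le -3\}$; placing \emph{both} $P$ and $Q$ between $\Pi$ and $\beta(t_0)$ (exactly the paper's choice of two hyperplanes each separating $\beta(t_0)$ from $L$) gives what you want, and then $o\bigl(\alpha,(P,Q)\bigr)=0<2\le o\bigl(\beta,(P,Q)\bigr)$ as claimed.
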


\begin{proof}
    Let $\alpha : [0,1]\to X$ be a parametrization of a line segment and $\beta:\bigl([0,1],0,1\bigr)\to \bigl(X, \alpha(0),\alpha(1)\bigr)$ be any path.

    Fix disjoint parallel hyperplanes $P,Q$ in $\mathbb R^n$.  If $o\bigl(\alpha, (P,Q)\bigr) =0$, then the first conclusion is trivial since the oscillation function is non-negative.  Thus we may assume that $\alpha(0),\alpha(1)$ lie in distinct components of $\cl_X\bigl(X\backslash g^{-1}_{(P,Q)}(0)\bigr)$.  Then $o\bigl(\alpha, (P,Q)\bigr) =-g_{(P,Q)} \bigl(\alpha(0)\bigr)\cdot g_{(P,Q)}\bigl(\alpha(1)\bigr) = -g_{(P,Q)} \bigl(\beta(0)\bigr)\cdot g_{(P,Q)}\bigl(\beta(1)\bigr) \leq o\bigl(\beta, (P,Q)\bigr )$.

    Suppose that $\beta$ is not a reparametrization of $\alpha$.  Then we must find $(P,Q)$ such that $o\bigl(\alpha, (P,Q)\bigr) < o\bigl(\beta, (P,Q)\bigr )$.  Let $l$ be the line in $\mathbb R^n$ containing $\alpha(0)$ and $\alpha(1)$.

    \textbf{Case 1:} $\im(\beta)$ is not contained in the line $l$.  Then there exists $t\in(0,1)$ such that $\beta(t)\not\in l$. Let $P,Q$ be disjoint parallel  hyperplanes each of which separates $\beta(t) $ from  $l$ in $\mathbb R^n$.  Then $o\bigl(\alpha, (P,Q)\bigr) =0 <2= -g_{(P,Q)} \bigl(\beta(0)\bigr)\cdot g_{(P,Q)}\bigl(\beta(t)\bigr) -g_{(P,Q)} \bigl(\beta(t)\bigr)\cdot g_{(P,Q)}\bigl(\beta(1)\bigr)\leq  o\bigl(\beta, (P,Q)\bigr )$.

    \textbf{Case 2:} $\im(\beta)$ is  contained in the line $l$.  Since $\beta$ is not a reparametrization of $\alpha$, $\beta$ contains a nondegenerate nullhomotopic subloop $\beta|_{[s,t]}$.  Then for any disjoint parallel hyperplanes $P,Q$ such that $o\bigl(\beta|_{[s,t]}, (P,Q)\bigr)> 0$, we have that $o\bigl(\alpha, (P,Q)\bigr) < o\bigl(\beta, (P,Q)\bigr )$.
\end{proof}

Since the oscillation function counts the number of times a path crosses the middle third between to parallel hyperplanes, the following is an exercise.

\begin{lem}\label{not increase}Let $X\subset \mathbb R^n$ and $\alpha: [0,1] \to X$ a continuous map.  Then $o\bigl(\tilde\alpha, (P,Q)\bigr) \leq o\bigl( \alpha, (P,Q)\bigr )$ for every $(P,Q)$ where $\tilde \alpha$ is the path obtained by replacing a subpath of $\alpha$ by a parametrization of the line segment between its endpoints.
\end{lem}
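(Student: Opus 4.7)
The plan is to show that, for every finite partition $t_0\le t_1\le\cdots\le t_k$ of the domain of $\tilde\alpha$, the oscillation sum
$$-\sum_{i=1}^k g_{(P,Q)}\bigl(\tilde\alpha(t_{i-1})\bigr)\cdot g_{(P,Q)}\bigl(\tilde\alpha(t_i)\bigr)$$
is bounded above by $o\bigl(\alpha,(P,Q)\bigr)$; taking the supremum over partitions on the left then gives the conclusion. Write $[a,b]$ for the subinterval on which $\tilde\alpha$ replaces $\alpha$ by the line segment.

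The crucial observation is that $\tilde\alpha$ agrees with $\alpha$ off $[a,b]$, so only partition points in $[a,b]$ require attention. Because $g_{(P,Q)}=g\circ h_{(P,Q)}$ is non-decreasing in the first coordinate and $h_{(P,Q)}$ is affine, the image of the line segment has monotone first coordinate; hence $g_{(P,Q)}\circ\tilde\alpha|_{[a,b]}$ is a monotone real-valued function travelling from $g_{(P,Q)}(\alpha(a))$ to $g_{(P,Q)}(\alpha(b))$.

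Given a partition $T$, list $T\cap[a,b]=\{s_1\le\cdots\le s_m\}$ and set $v_i=g_{(P,Q)}\bigl(\tilde\alpha(s_i)\bigr)$. The $v_i$ then form a monotone sequence lying in the interval spanned by $g_{(P,Q)}(\alpha(a))$ and $g_{(P,Q)}(\alpha(b))$. Applying the intermediate value theorem inductively to the continuous function $g_{(P,Q)}\circ\alpha:[a,b]\to[-1,1]$, which takes precisely those two endpoint values, I can produce times $a\le s_1'\le\cdots\le s_m'\le b$ with $g_{(P,Q)}\bigl(\alpha(s_i')\bigr)=v_i$. Let $T^*$ be obtained from $T$ by swapping each $s_i$ for $s_i'$. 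Because $\alpha$ and $\tilde\alpha$ agree off $[a,b]$ and the $g_{(P,Q)}$-values at paired partition points are equal by construction, the oscillation sum for $\alpha$ along $T^*$ equals the oscillation sum for $\tilde\alpha$ along $T$, which is therefore at most $o\bigl(\alpha,(P,Q)\bigr)$.

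The step I expect to be the main obstacle is the monotonicity of $g_{(P,Q)}\circ\tilde\alpha|_{[a,b]}$: this forces the interpretation of ``parametrization of the line segment'' as a monotone (for instance affine) reparametrization, since a back-and-forth trace of the same segment can increase oscillation and would defeat the statement. Once monotonicity is secured, the intermediate value theorem handles the interior matching, and the cross-terms at the boundary of $[a,b]$ coincide automatically: at a partition point $t<a$ neighbouring $s_1$, one factor $g_{(P,Q)}(\tilde\alpha(t))=g_{(P,Q)}(\alpha(t))$ is untouched and the paired factor $g_{(P,Q)}(\tilde\alpha(s_1))=g_{(P,Q)}(\alpha(s_1'))$ matches by construction, so the term-by-term comparison goes through and the supremum bound follows.
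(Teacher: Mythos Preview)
Your argument is correct. The paper does not actually prove this lemma: it simply remarks that ``the oscillation function counts the number of times a path crosses the middle third between two parallel hyperplanes'' and declares the statement an exercise, so your write-up supplies precisely the details the paper omits. Your caveat that ``parametrization of the line segment'' must be read as a monotone trace is well taken and matches the paper's usage (e.g.\ in Lemma~\ref{kreduced}); with that convention the monotonicity of $g_{(P,Q)}\circ\tilde\alpha|_{[a,b]}$ is exactly as you describe, and the inductive application of the intermediate value theorem to produce the matching partition $T^*$ for $\alpha$ is clean and complete.
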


The following lemma relates being $k$-reduced with having minimal oscillation for planar sets.

\begin{lem}\label{reduced oscillation}
Let $X$ be a planar continuum.  A path $\alpha: [0,1]\to X$ is $k$-reduced if and only if $o\bigl(\alpha, (P,Q)\bigr) \leq o\bigl(\beta, (P,Q)\bigr )$ for every path $\beta$ in the homotopy class of $\alpha$ and every pair of lines $(P,Q)$ parallel to $k$.
\end{lem}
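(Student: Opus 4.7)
Both directions hinge on the observation that when $(P,Q)$ is a pair of lines parallel to $k$, the function $g_{(P,Q)}$ is constant on each line parallel to $k$ and hence on each maximal segment of $X$ parallel to $k$; it therefore factors through $\pi_k$ as $g_{(P,Q)}|_X = \bar g_{(P,Q)} \circ \pi_k$ for some $\bar g_{(P,Q)} \colon X_k \to [-1,1]$. Thus $o\bigl(\gamma,(P,Q)\bigr)$ depends only on $\pi_k \circ \gamma$, and the statement reduces to a comparison of oscillations of reduced versus non-reduced paths in the one-dimensional continuum $X_k$.

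For the forward direction, suppose $\pi_k \circ \alpha$ is reduced and let $\beta$ be homotopic to $\alpha$. Theorem \ref{reducedloop-thm} applied to $\pi_k \circ \beta$ provides an open cancellation whose component closures, when collapsed to constants, yield a reduced representative of $\pi_k \circ \beta$; by uniqueness this representative is a reparametrization of $\pi_k \circ \alpha$. Given any finite ordered collection of points realizing a partial sum in $o\bigl(\pi_k\circ\alpha,(P,Q)\bigr)$, I pull the points back to the domain of $\pi_k \circ \beta$ through the reparametrization and nudge any point that lands inside a collapsed component to one of its endpoints: the value of $\bar g_{(P,Q)}$ is unchanged, and the resulting partial sum for $\pi_k \circ \beta$ has the same value. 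Taking suprema yields $o\bigl(\alpha,(P,Q)\bigr) \leq o\bigl(\beta,(P,Q)\bigr)$.

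For the backward direction I argue the contrapositive. Suppose $\pi_k\circ\alpha$ is not reduced and choose $[c,d]$ on which it is a nondegenerate nullhomotopic loop. Define $\beta$ by replacing $\alpha|_{[c,d]}$ with the line segment between $\alpha(c)$ and $\alpha(d)$; this segment is parallel to $k$ since these two points lie on a common maximal segment of $X$ parallel to $k$, and the proof of Lemma \ref{kreduced} shows $\beta$ is homotopic to $\alpha$. The key geometric step is that $\alpha([c,d])$ cannot lie on any single line parallel to $k$: if it did, path-connectedness would place it inside a single maximal segment parallel to $k$, forcing $\pi_k\alpha|_{[c,d]}$ to be constant and contradicting nondegeneracy. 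So I can pick $t_0 \in (c,d)$ with $\alpha(t_0)$ strictly off the line $l$ through $\alpha(c)$ and $\alpha(d)$, and then choose $(P,Q)$ parallel to $k$ whose two lines both lie strictly between $\alpha(t_0)$ and $l$ in the direction perpendicular to $k$. Up to a sign this yields $g_{(P,Q)}(\alpha(c))=g_{(P,Q)}(\alpha(d))=-1$ and $g_{(P,Q)}(\alpha(t_0))=+1$.

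To deduce the strict inequality $o\bigl(\alpha,(P,Q)\bigr) > o\bigl(\beta,(P,Q)\bigr)$, take optimal points $B_\beta$ for $\beta$. Because $g_{(P,Q)}\circ\beta$ is constant on $[c,d]$, a short merging argument (removing one of two consecutive points of $B_\beta$ lying in $[c,d]$ strictly increases the sum) lets me assume $B_\beta \cap (c,d) = \emptyset$ and at most one of $c, d$ belongs to $B_\beta$. Then $B_\alpha := B_\beta \cup \{c,t_0,d\}$ is a valid collection for $\alpha$, and a direct case analysis on whether $c$ or $d$ already lies in $B_\beta$ shows that the new pair contributions beat the removed ones by at least $1$ in each case; in the cleanest case $c,d \notin B_\beta$ the increment is $1 + \bigl(1 + g_{(P,Q)}(\beta(b_L))\bigr)\bigl(1 + g_{(P,Q)}(\beta(b_R))\bigr)$ for the consecutive points $b_L, b_R$ of $B_\beta$ bracketing $(c,d)$. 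The main obstacle I anticipate is precisely this last step: carefully verifying the \emph{WLOG} reductions on $B_\beta$ and checking that the net increment is strictly positive across every configuration of $c, d$ relative to $B_\beta$.
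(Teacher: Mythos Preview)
Your proof is correct and follows essentially the same approach as the paper: the forward direction in both cases transfers an optimal collection of points from $\alpha$ to $\beta$ via the reduced-path structure of $\pi_k\circ\alpha$ (the paper simply asserts the existence of the $b_i$'s, while you produce them through the open cancellation), and the backward direction in both cases straightens a cancelable subpath and picks $(P,Q)$ separating $\alpha(t_0)$ from $\alpha(\{c,d\})$. Your treatment of the strict inequality is more detailed than the paper's one-line ``crosses the middle third strictly fewer times,'' but the underlying idea is the same.
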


\begin{proof}
    Suppose that $\alpha : [0,1]\to X$ is $k$-reduced and $P,Q$ is a pair  of lines parallel to $k$.  Fix $0\leq a_0\leq a_1\leq\cdots\leq a_k\leq 1$ such that $$o\bigl(\alpha, (P,Q)\bigr)= -\sum\limits_{i=1}^k g_{(P,Q)} \bigl(\alpha(a_{i-1})\bigr)\cdot g_{(P,Q)}\bigl(\alpha(a_i)\bigr).$$
    Let $\beta :[0,1]\to X$ be any path homotopic to $\alpha$ relative to endpoints.  Since $\pi_k\circ \alpha$ is a reduced path, there exists $0\leq b_0\leq b_1\leq\cdots\leq b_k\leq 1$ such that $\pi_k\circ \alpha(a_i) = \pi_k\circ \beta(b_i)$.  Thus $\alpha(a_i) $ and $\beta(b_i)$ lie on a line parallel to $k$.  Since $k$ is parallel to $P$ and $g_{(P,Q)}$ is constant on lines parallel to $P$, $g_{(P,Q)}\bigl(\alpha(a_i)\bigr) = g_{(P,Q)}\bigl(\beta(a_i)\bigr)$.  Thus $o\bigl(\alpha, (P,Q)\bigr) \leq o\bigl(\beta, (P,Q)\bigr )$.

    Suppose that $\alpha$ is not $k$-reduced.  Then there exists $c,d\in [0,1]$ such that  $\pi_k\circ \alpha\bigr|_{[c,d]}$ is a nondegenerate nullhomotopic loop.  Hence there exists $b\in(c,d)$ and lines $P,Q$ parallel to $k$ such that $\alpha\bigl(\{c,d\}\bigr)$ and $\alpha(b)$ are in distinct thick components of $\mathbb R^2\backslash \{P,Q\}$.  Let  $\alpha': [0,1] \to X$ be the path obtained by replacing $\alpha|_{[c,d]}$ with the line segment $\overline{g(c)g(d)}$.  Thus $\alpha'$ crosses the middle third between $(P,Q)$ strictly few times than $\alpha$.  Thus $\alpha'$ is homotopic to $\alpha$ and $o\bigl(\alpha', (P,Q)\bigr)< o\bigl(\alpha, (P,Q)\bigr)$.
 \end{proof}

\begin{defn}
All pairs of parallel hyperplanes in the $\mathbb R^n$ can obtained by a composition of the following functions: a dilation (to fix a distance between hyperplanes), a rotation (to fix an orientation of the hyperplanes), and a translation. Thus the set of parallel hyperplanes may be topologized by the compact open topology, and the resulting space is separable metric. Hence, we may fix a countable dense set $\bigl\{(P_i,Q_i)\bigr\}$ of parallel hyperplanes. We define the \emph{total oscillation} of a path $\alpha : [a,b]\to \mathbb R^n$ by the following formula  $$\mathcal T(\alpha) = \sum\limits_{i=1}^\infty \left(\frac{1}{2^i}\right)\left[\frac{o\bigl(\alpha,(P_i,Q_i)\bigr)}{1+ o\bigl(\alpha,(P_i,Q_i)\bigr)}\right].$$

We say that a path $f: [a,b]\to X$ is an \emph{oscillatory geodesic} if it has minimum total oscillation in its homotopy class.
\end{defn}

In general subsets of $\mathbb R^n$ will not always have oscillatory geodesics in every path class.  For example, there does not exists an oscillatory geodesic from $(-1,0)$ to $(1,0)$ in $\mathbb R^2\backslash\bigl\{(0,0)\bigr\}$.  Even Peano continua will not necessarily have oscillatory geodesics, since any space which is not homotopically Hausdorff (contains homotopically essential loops which can be homotoped arbitrarily small) will have path classes without oscillatory geodesics.

\begin{thm}[{\cite[Theorem 2.3]{ccz}}]

The total oscillation of a path is a continuous function on the space of paths. It vanishes on a path if and only if the path is constant. If a path $\alpha$ is the concatenation of paths $\beta$ and $\gamma$, with $\gamma$ nonconstant, then $\mathcal T(\alpha) >\mathcal T(\beta)$. Thus every path $\alpha$ may be parameterized by total oscillation. This parametrization collapses constant subpaths to degenerate subpaths.

\end{thm}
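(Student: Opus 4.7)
The plan is to prove the four assertions in sequence, each building on the previous.

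For continuity of $\mathcal{T}$, I would apply the Weierstrass M-test: each summand is bounded by $1/2^i$, so the series converges uniformly on the space of paths, and each summand is continuous in the path by Proposition \ref{three parameters} composed with the continuous map $x\mapsto x/(1+x)$. A uniform limit of continuous functions is continuous, which yields the first assertion.

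For the vanishing characterization, a constant path has $o(\alpha,(P,Q))=0$ for every $(P,Q)$ by the convention on trivial partitions, so $\mathcal{T}(\alpha)=0$. Conversely, if $\alpha(s)\neq\alpha(t)$ for some $s,t$, the condition $g_{(P,Q)}(\alpha(s))=-1$ and $g_{(P,Q)}(\alpha(t))=+1$ defines a nonempty open subset of the space of parallel hyperplane pairs, so by density some $(P_i,Q_i)$ lies in it, and the two-point partition $\{s,t\}$ forces $o(\alpha,(P_i,Q_i))\geq 1$, hence $\mathcal{T}(\alpha)>0$.

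I expect the main obstacle to be strict monotonicity under concatenation. Let $\alpha=\beta*\gamma$ with $\gamma$ nonconstant. Every partition of $\beta$'s domain induces a partition of $\alpha$'s first half with identical sum, so $o(\alpha,(P,Q))\geq o(\beta,(P,Q))$ for every $(P,Q)$. For strict inequality at some hyperplane pair, apply the vanishing argument to $\gamma$ to obtain $(P_i,Q_i)$ in the dense set at which $g_{(P_i,Q_i)}$ attains both $+1$ and $-1$ on the image of $\gamma$, at parameters $r_+$ and $r_-$ respectively. Fix an optimal partition $a_0\leq\cdots\leq a_k$ of $\beta$ (provided by Proposition \ref{three parameters}) realizing $o(\beta,(P_i,Q_i))$, and consider extensions of this partition into the $\gamma$-half that append the point corresponding to $r_+$, to $r_-$, or to both in the order in which they occur in $\gamma$. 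A short case analysis on $c=g_{(P_i,Q_i)}(\beta(a_k))\in[-1,1]$ shows that at least one such extension produces a strictly positive additional contribution to the sum, regardless of $c$ and of the relative order of $r_+,r_-$. Thus $o(\alpha,(P_i,Q_i))>o(\beta,(P_i,Q_i))$; combined with the uniform inequality $o(\alpha,(P_j,Q_j))\geq o(\beta,(P_j,Q_j))$ and the strict monotonicity of $x\mapsto x/(1+x)$, this gives $\mathcal{T}(\alpha)>\mathcal{T}(\beta)$.

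Finally, define $\phi(t)=\mathcal{T}(\alpha|_{[0,t]})$. The previous steps show that $\phi$ is continuous, non-decreasing, and constant on a subinterval if and only if $\alpha$ is constant there, so the assignment $\tilde\alpha(\phi(t))=\alpha(t)$ is well-defined, and monotonicity of $\phi$ together with continuity of $\alpha$ makes $\tilde\alpha$ continuous on $[0,\mathcal{T}(\alpha)]$. This $\tilde\alpha$ is the desired reparametrization by total oscillation, and the maximal constant subpaths of $\alpha$ collapse to single parameter values.
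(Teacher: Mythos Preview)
The paper does not give its own proof of this theorem; it is quoted verbatim as \cite[Theorem 2.3]{ccz} and used as a black box. Your proposal therefore cannot be compared against anything in the present paper, but it stands on its own as a correct and complete argument: the Weierstrass $M$-test handles continuity, the density argument handles the vanishing characterization, your case analysis on $c=g_{(P_i,Q_i)}\bigl(\beta(a_k)\bigr)$ for strict monotonicity is sound (for any $c\in[-1,1]$ at least one of the three extensions by $r_+$, $r_-$, or both yields a strictly positive increment), and the reparametrization follows. One small point worth making explicit is that $\phi(t)=\mathcal T\bigl(\alpha|_{[0,t]}\bigr)$ is continuous because oscillation is invariant under reparametrization, so one may regard each $\alpha|_{[0,t]}$ as a path on $[0,1]$ via $s\mapsto\alpha(ts)$ and then invoke uniform continuity of $\alpha$ together with the continuity of $\mathcal T$ already established.
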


We will say that a path $\alpha: [0,1]\to X$ is \emph{parameterized proportional to total oscillation} if $\mathcal T\bigl( \alpha|_{[0,t]}\bigr) = t\cdot \mathcal T(\alpha)$.

For compact subsets, this gives us the following corollary.

\begin{cor}\label{small oscillation difference}Let $X$ be a compact subset of $\mathbb R^n$.  For every  $\epsilon>0$ there exists a $\delta>0$ such that if  $\alpha,\beta: [0,a]\to X$ are two continuous paths and $\d\bigl(\alpha(t),\beta(t)\bigr)< \delta$ for all $t$ then $|\mathcal T(\alpha)- \mathcal T(\beta)|<\epsilon$.
\end{cor}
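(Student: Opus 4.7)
The plan is to truncate the defining series and handle the finitely many head terms uniformly. Given $\epsilon>0$, first choose $N$ with $\sum_{i>N}2^{-i}<\epsilon/2$. Since each summand $\frac{2^{-i} o_i}{1+o_i}$ is bounded above by $2^{-i}$ for every path, the tail of the series contributes at most $\epsilon/2$ to $|\mathcal T(\alpha)-\mathcal T(\beta)|$ for any $\alpha,\beta$, leaving only the finite head to control.

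For each $i\le N$ it suffices to produce $\delta_i>0$ such that $\d(\alpha(t),\beta(t))<\delta_i$ for all $t$ forces $|f(o_i(\alpha))-f(o_i(\beta))|<\epsilon/2$, where $f(x)=x/(1+x)$ and $o_i(\gamma)=o(\gamma,(P_i,Q_i))$; then $\delta:=\min_i\delta_i$ completes the argument since $\sum_{i\le N}2^{-i}\le 1$. To obtain such a uniform $\delta_i$, I would choose $M$ with $(1+M)^{-1}<\epsilon/4$ and split by the magnitude of the oscillations. When both $o_i(\alpha),o_i(\beta)\ge M$, both values of $f$ lie in $[1-\epsilon/4,1)$, so their difference is automatically under $\epsilon/2$. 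Otherwise $\min\{o_i(\alpha),o_i(\beta)\}\le M$, and I would estimate directly, mimicking the argument behind Corollary \ref{small partial oscillation difference}: for any partition $\{a_j\}_{j=0}^k$, the sum $-\sum_j g_{(P_i,Q_i)}(\alpha(a_{j-1}))\,g_{(P_i,Q_i)}(\alpha(a_j))$ differs from the analogous sum for $\beta$ by at most $2kC_i\delta_i$, where $C_i$ is the Lipschitz constant of $g_{(P_i,Q_i)}$. Applying this to a near-optimal partition for the path of smaller oscillation and using that $f$ is $1$-Lipschitz yields $|f(o_i(\alpha))-f(o_i(\beta))|\le 2kC_i\delta_i$.

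The main obstacle is bounding $k$ uniformly when only $\min\{o_i(\alpha),o_i(\beta)\}\le M$ is known, since an optimal partition for a confined, rapidly oscillating path can have far more partition points than its oscillation value. I plan to handle this by restricting to a near-optimal partition that keeps only points $a_j$ with $|g_{(P_i,Q_i)}(\alpha(a_j))|\ge\eta_0$ for a small auxiliary threshold $\eta_0=\eta_0(\epsilon)$: the discarded points contribute terms of magnitude $O(\eta_0)$, which can be absorbed into the $\epsilon/2$ error budget, while each consecutive pair among the retained points contributes at least $\eta_0^2$ to the sum, forcing their count to be at most $M/\eta_0^2$. With this uniform bound on $k$, any $\delta_i$ below $\epsilon\eta_0^2/(4MC_i)$ delivers the desired estimate.
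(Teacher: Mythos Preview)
Your threshold argument in the final paragraph breaks down. Removing a partition point $a_j$ with $\bigl|g_{(P_i,Q_i)}\bigl(\alpha(a_j)\bigr)\bigr|<\eta_0$ replaces two terms of size at most $\eta_0$ by the single new term $-g(a_{j-1})g(a_{j+1})$, which can be as negative as $-1$; each removal may therefore cost $O(1)$, not $O(\eta_0)$. Worse, in the regime you must control, the optimal partition can consist \emph{entirely} of sub-threshold points. Concretely: write $[c-w,c+w]$ for the middle third of $(P_1,Q_1)$, work along the line perpendicular to $P_1$, fix $\delta\in(0,2w)$, let $\beta_m$ oscillate $m$ times between $c-w-\delta$ and $c+w+\delta$, and let $\alpha_m$ agree with $\beta_m$ where $\beta_m\ge c-w$ and equal $c-w$ elsewhere. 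Then $\d\bigl(\alpha_m(t),\beta_m(t)\bigr)\le\delta$ for all $t$, and $g_1\circ\alpha_m\ge 0$ so $o_1(\alpha_m)=0$, while $g_1\circ\beta_m$ oscillates in $[-\eta,\eta]$ with $\eta=\delta/(2w)$, giving $o_1(\beta_m)\approx 2m\eta^2\to\infty$. Every point of the optimal partition for $\beta_m$ has $\bigl|g_1\circ\beta_m\bigr|\le\eta$, which lies below any preassigned $\eta_0$ once $\delta$ is small; your restricted partition is then empty and recovers none of the oscillation, and no bound of the form $k\le M/\eta_0^2$ is available.

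This obstruction is not merely a gap in your write-up. The same paths give $f\bigl(o_1(\alpha_m)\bigr)=0$ while $f\bigl(o_1(\beta_m)\bigr)\to 1$, and a dominated-convergence check over the remaining indices shows $\mathcal T(\beta_m)-\mathcal T(\alpha_m)\to\sum_{i\in S_\delta}2^{-i}\ge 2^{-1}$ as $m\to\infty$, where $S_\delta$ is the set of indices whose middle third is straddled by the range of $\beta_m$ but not of $\alpha_m$. Thus for every $\delta>0$ there are $\delta$-close paths in a fixed compact $X$ whose total oscillations differ by more than $1/4$, so the corollary as written---uniform continuity of $\mathcal T$ over \emph{all} paths in $X$---appears to fail. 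The paper itself gives no proof beyond citing continuity of $\mathcal T$ from \cite{ccz}, which does not yield a uniform $\delta$ since the path space is not compact; its sole application (in the proof of Proposition~\ref{continuously}) is to subpaths of oscillatory geodesics, where such unbounded oscillation is ruled out, and any correct version of the statement would need to exploit that restriction.
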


\begin{lem}\label{oscillatory-lreduced}
A path in a planar continuum is an oscillatory geodesic if and only if it is $l$-reduced for any line $l$ in the plane.
\end{lem}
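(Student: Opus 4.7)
The plan is to reduce both directions of the equivalence to pointwise comparisons of the partial oscillations $o\bigl(\cdot,(P,Q)\bigr)$, using Lemma \ref{reduced oscillation} as the workhorse and then integrating over the countable dense family $\{(P_i,Q_i)\}$ that defines $\mathcal T$.

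For the forward implication, suppose $\alpha$ is $l$-reduced for every line $l$. Fix any path $\beta$ homotopic to $\alpha$ relative to endpoints. For each $i$ the pair $(P_i,Q_i)$ is parallel to some line $l_i$, and since $\alpha$ is $l_i$-reduced, Lemma \ref{reduced oscillation} yields $o\bigl(\alpha,(P_i,Q_i)\bigr)\le o\bigl(\beta,(P_i,Q_i)\bigr)$. Because $x\mapsto x/(1+x)$ is monotone on $[0,\infty)$, every term of the series defining $\mathcal T(\alpha)$ is dominated termwise by the corresponding term of $\mathcal T(\beta)$, so $\mathcal T(\alpha)\le \mathcal T(\beta)$, and $\alpha$ is an oscillatory geodesic.

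For the converse, suppose $\alpha$ is an oscillatory geodesic but, toward a contradiction, fails to be $k$-reduced for some line $k$ in the plane. By the Reduced Path Lifting Lemma (Lemma \ref{kreduced}) there is a path $\tilde\alpha$ homotopic to $\alpha$ that is obtained by replacing certain subpaths of $\alpha$ by line segments and for which $\pi_k\circ\tilde\alpha$ is reduced. By Lemma \ref{not increase}, each such replacement is nonincreasing in oscillation, so $o\bigl(\tilde\alpha,(P,Q)\bigr)\le o\bigl(\alpha,(P,Q)\bigr)$ for \emph{every} pair of parallel hyperplanes $(P,Q)$. The argument inside the proof of Lemma \ref{reduced oscillation} moreover furnishes a specific pair $(P_0,Q_0)$ of lines parallel to $k$ and a path $\alpha'$ homotopic to $\alpha$ with $o\bigl(\alpha',(P_0,Q_0)\bigr)<o\bigl(\alpha,(P_0,Q_0)\bigr)$; applying the $k$-reduced half of Lemma \ref{reduced oscillation} to $\tilde\alpha$ against $\alpha'$ then gives $o\bigl(\tilde\alpha,(P_0,Q_0)\bigr)\le o\bigl(\alpha',(P_0,Q_0)\bigr)<o\bigl(\alpha,(P_0,Q_0)\bigr)$.

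The remaining step—the one I expect to be the only nonroutine point—is to upgrade this strict inequality at the specific pair $(P_0,Q_0)$ to a strict inequality at some pair $(P_{i_0},Q_{i_0})$ from the dense countable family. I would do this via local constancy: by Proposition \ref{three parameters}, $o\bigl(\tilde\alpha,\cdot\bigr)$ and $o\bigl(\alpha,\cdot\bigr)$ are continuous in the $(P,Q)$ parameter and take values in $\mathbb Z_{\ge 0}$, hence are locally constant near $(P_0,Q_0)$; choosing an index $i_0$ with $(P_{i_0},Q_{i_0})$ inside a common neighborhood of constancy, one obtains $o\bigl(\tilde\alpha,(P_{i_0},Q_{i_0})\bigr)<o\bigl(\alpha,(P_{i_0},Q_{i_0})\bigr)$. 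Combined with the pointwise domination $o\bigl(\tilde\alpha,(P_j,Q_j)\bigr)\le o\bigl(\alpha,(P_j,Q_j)\bigr)$ for all $j$ and the strict monotonicity of $x\mapsto x/(1+x)$, this forces $\mathcal T(\tilde\alpha)<\mathcal T(\alpha)$, contradicting that $\alpha$ minimizes total oscillation in its homotopy class. Hence $\alpha$ must be $l$-reduced for every line $l$, completing the proof.
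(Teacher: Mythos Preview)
Your overall strategy matches the paper's almost exactly, but there is one genuine error in the converse direction. You assert that $o\bigl(\alpha,\cdot\bigr)$ and $o\bigl(\tilde\alpha,\cdot\bigr)$ take values in $\mathbb Z_{\ge 0}$ and are therefore locally constant. This is false: the weight function $g_{(P,Q)}$ interpolates continuously through $[-1,1]$ (it is piecewise linear, not piecewise constant), so the products $-g_{(P,Q)}\bigl(f(a_{i-1})\bigr)\cdot g_{(P,Q)}\bigl(f(a_i)\bigr)$ and hence the oscillation $o\bigl(f,(P,Q)\bigr)$ are real-valued, not integer-valued. Local constancy therefore does not follow from continuity.

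The fix is immediate and is precisely what the paper does: continuity alone suffices. The function $(P,Q)\mapsto o\bigl(\alpha,(P,Q)\bigr)-o\bigl(\tilde\alpha,(P,Q)\bigr)$ is continuous by Proposition~\ref{three parameters} and strictly positive at $(P_0,Q_0)$, hence strictly positive on an open neighborhood; density of $\{(P_i,Q_i)\}$ then supplies an index $i_0$ with $o\bigl(\tilde\alpha,(P_{i_0},Q_{i_0})\bigr)<o\bigl(\alpha,(P_{i_0},Q_{i_0})\bigr)$, and your termwise argument goes through. One minor additional point: passing from Lemma~\ref{not increase} (a single replacement) to the full $\tilde\alpha$ (countably many replacements) requires either a finite-stage approximation and continuity of $o$ in the path variable, or a direct check; the paper explicitly invokes continuity here as well.
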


\begin{proof}
If a path is $l$-reduced for any line $l$ in the plane, then it is an oscillatory geodesic by Lemma \ref{reduced oscillation}.

Suppose that $\alpha$ is not $l$-reduced for some line $l$.  Lemma \ref{reduced oscillation} implies that  $o\bigl(\alpha,(P,Q)\bigr) >o\bigl(\tilde \alpha,(P,Q)\bigr)$ for some parallel line pair $(P,Q)$ where $\tilde \alpha$ is the path obtained by reducing $\alpha$ with respect to $l$.  Since $\bigl\{(P_i,Q_i)\bigr\}$ is a dense set of line pairs, the continuity of the oscillation function implies that there exists and $i$ such that $o\bigl(\alpha,(P_i,Q_i)\bigr) >o\bigl(\tilde \alpha,(P_i,Q_i)\bigr)$.  Lemma \ref{not increase} together with the continuity of the oscillation function imply that  $o\bigl(\tilde\alpha,(P,Q)\bigr) \leq o\bigl( \alpha,(P,Q)\bigr)$ for all $(P,Q)$.  Thus $\alpha $ is not an oscillatory geodesic.

\end{proof}

Since being $l$-reduced is preserved under taking subpaths, we have the following corollary.

\begin{cor}\label{subpaths}
If a path  in a planar continuum is an oscillatory geodesic, then each of its subpaths is also an oscillatory geodesic.
\end{cor}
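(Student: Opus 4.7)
The plan is to deduce this immediately from Lemma \ref{oscillatory-lreduced}, which characterizes oscillatory geodesics in a planar continuum as exactly those paths that are $l$-reduced for every line $l$ in the plane. So it suffices to show that the property of being $l$-reduced passes to subpaths, for each fixed line $l$.

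Fix an oscillatory geodesic $\alpha : [0,1] \to X$ and a subpath $\alpha|_{[a,b]}$. For any line $l$ in the plane, Lemma \ref{oscillatory-lreduced} gives that $\pi_k \circ \alpha$ is a reduced path in the one-dimensional continuum $X_l$. Now the restriction $(\pi_l \circ \alpha)|_{[a,b]}$ equals $\pi_l \circ (\alpha|_{[a,b]})$. If this restriction failed to be reduced, then by definition there would exist an open interval $U \subset [a,b] \subset [0,1]$ such that $(\pi_l \circ \alpha)|_{\cl(U)}$ is a nondegenerate nullhomotopic loop; but then the same interval $U$ would witness that $\pi_l \circ \alpha$ itself is reducible, contradicting the hypothesis that $\alpha$ is $l$-reduced. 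Hence $\pi_l \circ (\alpha|_{[a,b]})$ is reduced, i.e., $\alpha|_{[a,b]}$ is $l$-reduced.

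Since $l$ was arbitrary, $\alpha|_{[a,b]}$ is $l$-reduced for every line $l$. Applying the other direction of Lemma \ref{oscillatory-lreduced} concludes that $\alpha|_{[a,b]}$ is an oscillatory geodesic. There is no genuine obstacle here; the essence of the argument is the trivial observation that a nullhomotopic subloop of a subpath is, a fortiori, a nullhomotopic subloop of the full path, and the work has already been done in Lemma \ref{oscillatory-lreduced}.
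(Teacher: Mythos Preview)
Your proof is correct and follows exactly the approach the paper takes: the paper simply notes ``Since being $l$-reduced is preserved under taking subpaths, we have the following corollary,'' and you have spelled out precisely this observation using Lemma~\ref{oscillatory-lreduced}. (Minor typo: in your second paragraph you write $\pi_k\circ\alpha$ where you mean $\pi_l\circ\alpha$.)
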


\begin{cor}
The set of oscillatory geodesics in a planar continuum is independent of the dense set of parallel line pairs chosen to define the total oscillation function.
\end{cor}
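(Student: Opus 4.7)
The plan is to deduce this corollary directly from Lemma \ref{oscillatory-lreduced}, which gives a characterization of oscillatory geodesics that makes no reference to a particular dense set of line pairs.

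First I would observe that the statement of Lemma \ref{oscillatory-lreduced} says: for a fixed choice of dense set $\bigl\{(P_i,Q_i)\bigr\}$ used to define $\mathcal T$, a path $\alpha$ in the planar continuum is a minimizer of $\mathcal T$ in its homotopy class if and only if $\alpha$ is $l$-reduced for every line $l$ in the plane. The right-hand side of this equivalence is a purely topological condition on $\alpha$: it depends on the delineation maps $\pi_l$ and the notion of reduced path in the one-dimensional continuum $X_l$, and involves no choice of hyperplane pairs whatsoever.

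Thus, if $\mathcal T$ and $\mathcal T'$ are two total oscillation functions built from two different countable dense sets $\bigl\{(P_i,Q_i)\bigr\}$ and $\bigl\{(P_i',Q_i')\bigr\}$ of parallel line pairs, then applying Lemma \ref{oscillatory-lreduced} to each separately shows that a path $\alpha$ is an $\mathcal T$-minimizer in $[\alpha]$ if and only if $\alpha$ is $l$-reduced for every line $l$, and likewise for $\mathcal T'$. Since the common characterization is independent of the dense set, the two sets of oscillatory geodesics coincide. This finishes the proof.

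There is really no obstacle here: the work is all contained in Lemma \ref{oscillatory-lreduced}, whose proof already exploits both the density of $\bigl\{(P_i,Q_i)\bigr\}$ (to detect a single bad pair $(P,Q)$ via continuity of $o(\cdot,(P,Q))$ in its last two arguments) and Lemma \ref{not increase} (to guarantee that reducing with respect to $l$ does not increase oscillation against any other pair). The only thing to take care with when writing up is to phrase the proof so that the independence is clearly a logical consequence of the characterization, rather than requiring a fresh comparison argument between $\mathcal T$ and $\mathcal T'$.
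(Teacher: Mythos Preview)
Your proposal is correct and is exactly the argument the paper intends: the corollary is stated immediately after Lemma \ref{oscillatory-lreduced} without a separate proof, precisely because the characterization ``$l$-reduced for every line $l$'' makes no reference to the chosen dense set of line pairs. There is nothing to add.
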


Since the property of begin $l$-reduced for every line $l$ in the plane is an isometry invariant, we also have the following.

\begin{cor}
The set of oscillatory geodesics in a planar continuum is invariant under isometries of the plane.
\end{cor}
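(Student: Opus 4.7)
The plan is to reduce the statement to the characterization in Lemma \ref{oscillatory-lreduced}: a path $\alpha$ in a planar continuum is an oscillatory geodesic if and only if it is $l$-reduced for every line $l$ in the plane. Thus, if $\phi$ is an isometry of $\mathbb{R}^2$ and $X$ is a planar continuum, it suffices to show that $\alpha$ is $l$-reduced for every line $l$ if and only if $\phi\circ\alpha$ is $l'$-reduced for every line $l'$.

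First I would observe that $\phi$ takes the planar continuum $X$ to a planar continuum $\phi(X)$, and acts as a bijection on the set of lines in the plane, sending each line $l$ to a line $\phi(l)$. Moreover, since $\phi$ is an isometry, it sends maximal line segments in $X$ parallel to $l$ precisely to maximal line segments in $\phi(X)$ parallel to $\phi(l)$. Hence $\phi$ descends to a homeomorphism $\bar\phi_l : X_l \to \phi(X)_{\phi(l)}$ of the corresponding quotient spaces satisfying $\pi_{\phi(l)}\circ \phi = \bar\phi_l \circ \pi_l$.

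Because $\bar\phi_l$ is a homeomorphism, a subloop of $\pi_l\circ\alpha$ is nondegenerate and nullhomotopic in $X_l$ if and only if its image under $\bar\phi_l$, which equals the corresponding subloop of $\pi_{\phi(l)}\circ(\phi\circ\alpha)$, is nondegenerate and nullhomotopic in $\phi(X)_{\phi(l)}$. Therefore $\alpha$ is $l$-reduced if and only if $\phi\circ\alpha$ is $\phi(l)$-reduced. Since $l \mapsto \phi(l)$ is a bijection on lines, this gives the required equivalence, and Lemma \ref{oscillatory-lreduced} completes the proof.

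There is no real obstacle here; the corollary is essentially bookkeeping once one realizes that the delineation map is natural with respect to planar isometries. The only thing worth spelling out is the factorization $\pi_{\phi(l)}\circ \phi = \bar\phi_l \circ \pi_l$, which justifies transferring the ``no nondegenerate nullhomotopic subloop'' condition between the two quotients.
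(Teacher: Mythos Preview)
Your proof is correct and follows exactly the paper's approach: the paper simply remarks that being $l$-reduced for every line $l$ is an isometry invariant and invokes Lemma~\ref{oscillatory-lreduced}, which is precisely what you have spelled out in detail via the naturality of the delineation map under isometries.
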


If $\phi$ is an isometry of a planar continuum and $\alpha$ is an oscillatory geodesic, as noted, $\phi\circ\alpha$ is still an oscillatory geodesic.  However, in general,  $\mathcal T(\alpha) $ will no be equal to $ \mathcal T(\alpha\circ\phi)$.

The following follows from Lemma \ref{line segments}.

\begin{lem}\label{lineunique}
A parametrization of a line segment in the plane is the unique oscillatory geodesic in its path class (up to  reparametrization).
\end{lem}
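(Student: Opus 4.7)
The plan is to derive this lemma almost directly from Lemma \ref{line segments}, which already packages the key two-sided comparison into a single statement, together with continuity of oscillation and density of the chosen hyperplane pairs.

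First, to see that a parametrization $\alpha$ of a line segment is an oscillatory geodesic in its path class, fix any $\beta$ homotopic rel endpoints to $\alpha$. Lemma \ref{line segments} gives $o\bigl(\alpha,(P_i,Q_i)\bigr) \leq o\bigl(\beta,(P_i,Q_i)\bigr)$ for every $i$. Since $x\mapsto x/(1+x)$ is strictly increasing on $[0,\infty)$, each term of the defining series for $\mathcal T(\alpha)$ is bounded above by the corresponding term for $\mathcal T(\beta)$, so summing yields $\mathcal T(\alpha)\leq \mathcal T(\beta)$. Hence $\alpha$ realizes the minimum total oscillation in its homotopy class.

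For uniqueness, suppose $\beta$ is also an oscillatory geodesic homotopic rel endpoints to $\alpha$ but not a reparametrization of $\alpha$. The second half of Lemma \ref{line segments} produces a pair $(P,Q)$ of parallel lines with $o\bigl(\alpha,(P,Q)\bigr) < o\bigl(\beta,(P,Q)\bigr)$. Joint continuity of $o$ in $(P,Q)$ (Proposition \ref{three parameters}) inflates this strict inequality to an open neighborhood of $(P,Q)$ in the space of parallel line pairs, and density of $\bigl\{(P_i,Q_i)\bigr\}$ then supplies an index $i$ with $o\bigl(\alpha,(P_i,Q_i)\bigr) < o\bigl(\beta,(P_i,Q_i)\bigr)$. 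Combined with the pointwise inequality from the first half of Lemma \ref{line segments} for every other index, strict monotonicity of $x\mapsto x/(1+x)$ yields $\mathcal T(\alpha)<\mathcal T(\beta)$, contradicting the assumption that $\beta$ is an oscillatory geodesic.

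The only step that needs any care is the transfer of strict inequality from an arbitrary pair $(P,Q)$ to one of the fixed dense family $(P_i,Q_i)$; this is essentially immediate from Proposition \ref{three parameters}, so I do not expect a genuine obstacle. The crux is really just Lemma \ref{line segments}: once both the non-strict inequality for every $(P,Q)$ and the strict inequality at some $(P,Q)$ are available, the structure of the series defining $\mathcal T$ does all the remaining work.
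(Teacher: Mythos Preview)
Your proposal is correct and is exactly what the paper intends: the paper simply asserts that the lemma ``follows from Lemma \ref{line segments}'' without further argument, and you have supplied precisely the details (termwise comparison for the non-strict bound, and the continuity/density step to transfer the strict inequality to some $(P_i,Q_i)$) that make this deduction work.
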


\begin{lem}\label{small diameter}
    Let $X$ be a bounded subset of $\mathbb R^n$.  For every $\epsilon>0$,  there exists a $\delta>0$ such that if $\alpha:[0,1]\to X$ is a continuous path and $\mathcal T(\alpha)< \delta$ then $\diam\bigl(\im(\alpha)\bigr)\leq \epsilon$.
\end{lem}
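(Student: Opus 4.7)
The plan is to argue by contradiction. Suppose that for a given $\epsilon>0$ no $\delta$ works; then there is a sequence of paths $\alpha_m:[0,1]\to X$ with $\mathcal T(\alpha_m)\to 0$ but $\diam\bigl(\im(\alpha_m)\bigr)>\epsilon$ for every $m$. I would choose $s_m,t_m\in[0,1]$ with $d\bigl(\alpha_m(s_m),\alpha_m(t_m)\bigr)>\epsilon$, and use the compactness of $\overline X$ (which holds because $X$ is bounded in $\mathbb R^n$) to pass to a subsequence along which $\alpha_m(s_m)\to x^*$ and $\alpha_m(t_m)\to y^*$ in $\overline X$, with $d(x^*,y^*)\geq\epsilon>0$.

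Next I would pick an ``ideal'' pair of parallel hyperplanes witnessing nontrivial oscillation of the tail of this subsequence. Take $(P^*,Q^*)$ perpendicular to $y^*-x^*$, symmetric about the midpoint of $\overline{x^*y^*}$, with $d(P^*,Q^*)$ a small fraction of $d(x^*,y^*)$, say $d(P^*,Q^*)=d(x^*,y^*)/10$. A direct distance calculation places $x^*$ strictly on the side of $P^*$ opposite to $Q^*$ and $y^*$ strictly on the side of $Q^*$ opposite to $P^*$, so that $g_{(P^*,Q^*)}(x^*)=-1$ and $g_{(P^*,Q^*)}(y^*)=1$, with both equalities realized in the interior of the respective $\pm 1$ half-spaces.

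The main obstacle will be to transfer this witness to a pair from the fixed dense sequence $\{(P_i,Q_i)\}$, since only those pairs contribute to the series defining $\mathcal T$. The resolution exploits the strictness built into the previous step: since $x^*$ lies in the open half-space on which $g_{(P^*,Q^*)}=-1$, and $y^*$ lies in the open half-space on which $g_{(P^*,Q^*)}=1$, these inclusions persist on an open neighborhood $W$ of $(P^*,Q^*)$ in the space of parallel hyperplane pairs; by density I would pick an index $i$ with $(P_i,Q_i)\in W$. For $m$ large enough, $\alpha_m(s_m)$ and $\alpha_m(t_m)$ remain in the same open half-spaces, so $g_{(P_i,Q_i)}(\alpha_m(s_m))=-1$ and $g_{(P_i,Q_i)}(\alpha_m(t_m))=1$. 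Using the two-point partition $\{s_m,t_m\}$ (reordered if necessary) in the definition of oscillation gives $o\bigl(\alpha_m,(P_i,Q_i)\bigr)\geq 1$, hence
$$\mathcal T(\alpha_m)\geq \frac{1}{2^i}\cdot\frac{1}{1+1}=\frac{1}{2^{i+1}}$$
for all sufficiently large $m$, contradicting $\mathcal T(\alpha_m)\to 0$.
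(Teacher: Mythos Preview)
Your proof is correct but proceeds by a genuinely different route from the paper. The paper argues directly: it fixes a finite maximal $\epsilon/8$-separated set $\{a_i\}$ in $X$, and for each pair $(a_i,a_j)$ with $d(a_i,a_j)>3\epsilon/4$ selects from the dense family an index $k(i,j)$ so that $(P_{k(i,j)},Q_{k(i,j)})$ separates $a_i$ from $a_j$ with margin at least $\epsilon/4$; any path of diameter exceeding $\epsilon$ must pass within $\epsilon/8$ of two such net points, forcing $\mathcal T(\alpha)\geq 1/2^{k(i,j)+1}$ for one of the finitely many indices, and $\delta$ is chosen below the minimum of these bounds. Your argument instead uses contradiction and sequential compactness of $\overline X$ to produce a single limiting pair $x^*,y^*$ and a single separating hyperplane pair, which you then approximate from the dense family. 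Both proofs rest on the same core mechanism---density of $\{(P_i,Q_i)\}$ together with a strict separation margin that survives perturbation---but the paper's version is constructive and makes the uniformity of $\delta$ over all paths explicit via the finite net, whereas your version is shorter and avoids the net combinatorics at the price of being non-constructive.
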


\begin{proof}
    Let $\bigl\{(P_i,Q_i)\bigr\}$ be the parallel hyperplanes used to define $\mathcal T$ on $X$.  Let $\{a_i\}$ be a finite maximal $\epsilon/8$-separated set in $X$, i.e. $d(a_i, a_j) \geq \epsilon/8$ for all $i,j$ and for every $x\in X$ there exists an $i$ such that $\d(x, a_i)\leq \epsilon/8$.  (It is an exercise to show that every metric space has a maximal $\eta$-separated subset and since $X$ is a bounded subset of $\mathbb R^n$ any $\eta$-separated subset must be finite.)

    For every pair $\{a_i,a_j\}$ with the property that $\d(a_i,a_j)>3\epsilon/4$, we may choose a parallel hyperplanes $(P_{k(i,j)}, Q_{k(i,j)})$ such that $\d\bigl( a_i, \{P_{k(i,j)},Q_{k(i,j)}\}\bigr),\d\bigl( a_j, \{P_{k(i,j)}, Q_{k(i,j)}\}\bigr)> \epsilon/4$ and $a_i,a_j$ are contained in distinct thick components of $\mathbb R^n \backslash \{P_{k(i,j)},Q_{k(i,j)}\}$.

    Thus any path which comes within $\epsilon/4$ of both $a_i$ and $a_j$ must have oscillation at least $1/2^{k(i,j)+1}$ if $\d(a_i,a_j)> 3\epsilon/4$.  If a path $\alpha$ has diameter greater than $\epsilon$, there exists $a_i, a_j$ such that $\d\bigl(a_i, \im(\alpha)\bigr), \d\bigl(a_j, \im(\alpha)\bigr)\leq \epsilon/8$ and $\d(a_i, a_j)>3\epsilon/4$.  Thus $\mathcal T(\alpha)\geq 1/2^{k(i,j)+1}$.

    Let $N = \max\{k(i,j)\}$; then any $\delta < 1/2^{N+1}$ gives the desired conclusion.
\end{proof}

\section{Planar oscillatory geodesics}\label{pog}

Our goal in this section is to show that in a planar Peano continuum homotopy classes of paths have unique oscillatory geodesics that vary continuously with their endpoints.  This was done in \cite{ccz} for one-dimensional spaces.

\begin{defn}
For $x,y\in \mathbb R^2$, we will use $[x,y]$ to denote the unoriented line segment between $x$ and $y$ in the plane.  Recall that $\bbS $ is the unit circle in the plane.  Let $\cS$ be the boundary of the unit square $[0,1]\times[0,1]$.  For convenience, we will refer to the four maximal line segments of $\cS$ as the \emph{sides} of $\cS$.  When more specificity is required, we will refer to $[0,1]\times \{1\}$,  $[0,1]\times \{0\}$, $\{0\}\times[0,1]$, and $\{1\}\times[0,1]$ as the \emph{top}, \emph{bottom}, \emph{left side}, and \emph{right side} of $\cS$ respectively.

\end{defn}

\begin{defn}
    We will say that paths $\alpha, \beta$ are \emph{$l$-straight-line homotopic}, if their exists a homotopy, $h: [0,1]\times [0,1] \to X$, and reparametrizations $\theta_\alpha,\theta_\beta :[0,1]\to [0,1]$ such that $h(t,0) = \alpha\circ\theta_\alpha(t)$, $h(t,1) = \beta\circ\theta_\beta(t)$, $h(0,t) = \alpha(0)$, $h(1,t) = \alpha(1)$, and $h|_{\bigl[\{t\}\times [0,1]\bigr]}$ is a parametrization of a line segment parallel to $l$ for every $t\in[0,1]$.  By way of convention, we will allow line segments to be degenerate (i.e. points) and  a degenerate line segment is parallel to every line segment.
\end{defn}

\begin{lem}\label{straight line}
    Let $\alpha,\beta: [0,1]\to X$ be $l$-reduced paths in a planar continuum $X$ that are homotopic relative to endpoints.  Then there exist reparametrizations $\theta_\alpha,\theta_\beta: [0,1]\to [0,1]$ such that $\pi_l\circ\alpha\circ\theta_\alpha(t) = \pi_l\circ\beta\circ\theta_\beta(t)$ for all $t\in[0,1]$.  Additionally, $h:[0,1]\times[0,1] \to X$ by $h(s,t) = (1-t)\alpha\circ\theta_\alpha(s) + t\beta\circ\theta_\beta(s)$ is an $l$-straight-line homotopy from $\alpha$ to $\beta$.
\end{lem}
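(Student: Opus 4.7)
My plan is to first build reparametrizations $\theta_\alpha,\theta_\beta$ that render the $\pi_l$-projections of $\alpha\circ\theta_\alpha$ and $\beta\circ\theta_\beta$ literally equal, and then to exploit the fact that the fibers of $\pi_l$ are line segments parallel to $l$ to verify that the straight-line interpolation between the two aligned paths stays in $X$.

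Because $\alpha$ is homotopic to $\beta$ relative to endpoints, $\pi_l\circ\alpha$ and $\pi_l\circ\beta$ are homotopic reduced paths in the one-dimensional Peano continuum $X_l$. Applying Theorem \ref{reducedloop-thm} yields a canonical reduced path $\gamma:[0,1]\to X_l$ in the common homotopy class together with weakly monotone continuous endpoint-preserving surjections $\psi,\phi:[0,1]\to[0,1]$ satisfying $\pi_l\circ\alpha=\gamma\circ\psi$ and $\pi_l\circ\beta=\gamma\circ\phi$. To align these, I consider the fibered product $K=\{(s,t)\in[0,1]^2:\psi(s)=\phi(t)\}$. This set is closed, contains $(0,0)$ and $(1,1)$, and decomposes into axis-aligned rectangles $\psi^{-1}(u)\times\phi^{-1}(u)$ (over values $u$ where $\psi$ or $\phi$ is locally constant) joined together by graph-like arcs at the $u$-values where both $\psi$ and $\phi$ are strictly increasing. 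Tracing a continuous non-decreasing path through $K$ from $(0,0)$ to $(1,1)$, namely diagonally across each rectangle and along the connecting arcs, and letting $\theta_\alpha,\theta_\beta$ be its coordinate functions, one obtains continuous weakly monotone endpoint-preserving surjections with $\psi\circ\theta_\alpha=\phi\circ\theta_\beta$. Composing with $\gamma$ then yields $\pi_l\circ\alpha\circ\theta_\alpha=\pi_l\circ\beta\circ\theta_\beta$, establishing the first claim of the lemma.

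With the alignment in place, I verify that $h(s,t)=(1-t)\,\alpha\circ\theta_\alpha(s)+t\,\beta\circ\theta_\beta(s)$ has the required properties. For each fixed $s$, the points $\alpha\circ\theta_\alpha(s)$ and $\beta\circ\theta_\beta(s)$ share the same image under $\pi_l$, so they lie in a common decomposition element. By the definition of the delineation map $\pi_l$, such an element is either a single point or a maximal line segment of $X$ parallel to $l$; in either case the segment connecting the two points lies in $X$ and is parallel to $l$ (allowing degenerate segments). Therefore $h$ takes values in $X$; it is jointly continuous in $(s,t)$ since it is a bilinear expression in the continuous maps $\alpha\circ\theta_\alpha$ and $\beta\circ\theta_\beta$; and it satisfies $h(0,t)=\alpha(0)$ and $h(1,t)=\alpha(1)$ because $\theta_\alpha$ and $\theta_\beta$ fix the endpoints of $[0,1]$. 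Each vertical slice $h(\{s_0\}\times[0,1])$ parametrizes the (possibly degenerate) line segment between $\alpha\circ\theta_\alpha(s_0)$ and $\beta\circ\theta_\beta(s_0)$, a segment parallel to $l$, so $h$ is an $l$-straight-line homotopy.

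The main technical obstacle is the alignment step: producing the continuous monotone path through the fibered product $K$ requires care because the intervals of constancy of $\psi$ and $\phi$ generate axis-aligned rectangles of positive area, and one must patch together the diagonal traversals of these rectangles with the single-point fibers between them while preserving continuity and monotonicity in both coordinates. Once this is done, the remainder of the argument is a direct verification using the definition of $\pi_l$ and the formula for $h$.
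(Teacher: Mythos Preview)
Your proposal is correct and follows essentially the same approach as the paper: invoke the uniqueness of reduced paths in $X_l$ to align $\pi_l\circ\alpha$ and $\pi_l\circ\beta$ via reparametrizations, and then use that the fibers of $\pi_l$ are line segments parallel to $l$ to verify that the straight-line interpolation lands in $X$. The only difference is that the paper simply asserts the existence of $\theta_\alpha,\theta_\beta$ from the phrase ``reduced paths are unique up to reparametrization,'' whereas you carry out the alignment explicitly via the fibered product $K$; this extra care is a genuine filling-in of a step the paper elides, not a different argument.
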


\begin{proof}
    Since $\alpha,\beta$ are $l$-reduced paths they both map to reduced paths in $X_l$.  Since reduced paths are unique up to reparametrizations, there exist reparametrizations $\theta_\alpha,\theta_\beta:[0,1]\to [0,1]$ such that $\pi_l\circ\alpha\circ\theta_\alpha(t) = \pi_l\circ\beta\circ\theta_\beta(t)$ for all $t\in [0,1]$.

    Then $\bigr[\alpha\circ\theta_\alpha(t),\beta\circ\theta_\beta(t)\bigr]\subset X$ for all $t\in[0,1]$  and is parallel to $l$.  It is then immediate that the desire homotopy maps into $X$ and is continuous.
\end{proof}

\begin{thm}\label{existence}
    Every homotopy class of paths in a planar continuum has an oscillatory geodesic representative.
\end{thm}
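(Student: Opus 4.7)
My plan is to iteratively reduce $\alpha$ with respect to a dense sequence of line directions, pass to a uniform limit of the resulting paths, and verify that the limit is both homotopic to $\alpha$ and $l$-reduced for every line $l$ in the plane, so an oscillatory geodesic by Lemma \ref{oscillatory-lreduced}.

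Fix a countable list $\{l_i\}_{i \ge 1}$ of line directions, dense among all line directions, and arranged so each direction appears infinitely often. Set $\alpha_0 = \alpha$, and inductively define $\alpha_{n+1}$ by reducing $\alpha_n$ with respect to $l_{n+1}$ using Lemma \ref{kreduced}, so that $\alpha_{n+1}$ is $l_{n+1}$-reduced and homotopic to $\alpha_n$. Because this reduction only replaces subpaths by the line segments joining their endpoints, Lemma \ref{not increase} yields $o\bigl(\alpha_{n+1},(P,Q)\bigr) \le o\bigl(\alpha_n,(P,Q)\bigr)$ for every parallel line pair $(P,Q)$; consequently $\mathcal T(\alpha_n)$ is nonincreasing and converges to some $T_\infty \ge 0$. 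Moreover, the property of being $l$-reduced is inherited by later terms: once $\alpha_n$ is $l_i$-reduced, every $\alpha_m \sim \alpha_n$ with no greater oscillation on lines parallel to $l_i$ must also attain the oscillation minimum for such $(P,Q)$ by Lemma \ref{reduced oscillation}, and so is itself $l_i$-reduced. Since each direction occurs infinitely often, for every fixed $i$ all sufficiently large $\alpha_n$ are $l_i$-reduced.

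Reparametrize each $\alpha_n$ proportional to its total oscillation. Since $\mathcal T(\alpha_n) \le \mathcal T(\alpha)$, applying Lemma \ref{small diameter} to subintervals gives a uniform modulus of continuity for the family $\{\alpha_n\}$, and since the planar continuum $X$ is compact, Arzel\`a--Ascoli yields a subsequence $\alpha_{n_k}$ converging uniformly to some path $\beta:[0,1] \to X$. By continuity of $o$ (Proposition \ref{three parameters}), $o\bigl(\beta,(P,Q)\bigr) = \lim_k o\bigl(\alpha_{n_k},(P,Q)\bigr)$ for every $(P,Q)$. Granting the homotopy class is preserved, this limit equals the infimum of oscillation over the homotopy class of $\alpha$ for every $(P,Q)$ parallel to some $l_i$; by density of $\{l_i\}$ and the joint continuity of $o$, $\beta$ is then $l$-reduced for every line $l$, hence an oscillatory geodesic.

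The main obstacle is verifying $\beta \sim \alpha$, since uniform convergence of paths does not generally preserve homotopy class in spaces with bad local topology. My approach is to assemble the step homotopies $H_n : \alpha_n \Rightarrow \alpha_{n+1}$ produced in the proof of Lemma \ref{kreduced} into a single homotopy $H : [0,1] \times [0,1] \to X$ from $\alpha$ to $\beta$ by placing $\alpha_n$ at level $t_n = 1 - 1/2^n$ and inserting the rescaled $H_n$ on the strip $[0,1] \times [t_n, t_{n+1}]$. Continuity on the interior is immediate, continuity at $t = 1$ uses the uniform convergence $\alpha_{n_k} \to \beta$, and overall continuity is obtained from Lemma \ref{aalem1}. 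The delicate technical point is ensuring that the vertical fibers of the rescaled $H_n$ have diameters shrinking uniformly to $0$ as $n \to \infty$; this is where Lemma \ref{cutoff} is essential, combined with the telescoping bound $\sum_n \bigl(\mathcal T(\alpha_n) - \mathcal T(\alpha_{n+1})\bigr) = \mathcal T(\alpha) - T_\infty < \infty$, which constrains how much each step can modify the preceding path.
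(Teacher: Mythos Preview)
Your iterative-reduction strategy and the extraction of a uniform limit $\beta$ match the paper, and your observation that $l_i$-reducedness persists under later reductions is correct. The genuine gap is in showing $\beta\sim\alpha$. To stack the step homotopies $H_n$ into a continuous $H:[0,1]\times[0,1]\to X$ you need the fiber diameters of $H_n$ to tend to $0$; via Lemma~\ref{cutoff} these are controlled by $\max_i\diam\bigl(\alpha_n|_{[c_i,d_i]}\bigr)$ over the cancellation intervals of $\pi_{l_{n+1}}\circ\alpha_n$. But the telescoping bound $\mathcal T(\alpha_n)-\mathcal T(\alpha_{n+1})\to 0$ does not control these diameters. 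A cancellation subpath can have large diameter while contributing almost nothing to the oscillation drop: since its endpoints lie on a line parallel to $l_{n+1}$, the replacing segment is itself parallel to $l_{n+1}$, and if the subpath stays within $\epsilon$ of that segment then the oscillation change is of order $\epsilon$ even though the subpath---and hence the nullhomotopy supplied by Lemma~\ref{cutoff}---may have diameter comparable to $\diam(X)$. Nothing in the paper converts a small oscillation drop into a small modification diameter, and Lemma~\ref{small diameter} goes the wrong way. There are also secondary issues (Lemma~\ref{aalem1} is stated for disjoint modification intervals in a single $I$-factor rather than for a vertical stack of homotopies, and your total-oscillation reparametrizations must be reconciled with the $H_n$, which were built in the original parametrizations), but the diameter issue is the essential one.

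The paper bypasses any explicit homotopy from $\alpha$ to $\beta$. It uses that $\pi_{l*}$ is injective and that the one-dimensional continuum $X_l$ is shape injective, so that $\pi_1(X_l)$ embeds in $\varprojlim_m\pi_1\bigl(\mathcal N_{1/m}(X_l)\bigr)$ with neighborhoods taken in $\mathbb R^3$. In each open neighborhood $\mathcal N_{1/m}(X_l)$ the uniform convergence gives, for large $k$, a straight-line homotopy from $\pi_l\circ\alpha_{i_{n_k}}$ to $\pi_l\circ\beta$; since this holds for every $m$, one concludes $\pi_l\circ\alpha\sim\pi_l\circ\beta$ in $X_l$ and hence $\alpha\sim\beta$ in $X$. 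This shape-theoretic step is the idea your proposal is missing. (The paper then shows minimality of $\mathcal T(\beta)$ by a direct contradiction with Corollary~\ref{small partial oscillation difference}, rather than via the density argument you sketch.)
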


\begin{proof}
    Let $X$ be a planar continuum and fix $\alpha:\bigl([0,1],0,1\bigr) \to \bigl(X,x,y\bigr)$.  Let $\bigl\{(P_i,Q_i)\bigr\}$ be the countable dense set of lines in the plane used to define the total oscillation function.

    Inductively define $\alpha_i :\bigl([0,1],0,1\bigr)\to\bigl(X,x,y\bigr)$ as the path obtained by reducing $\alpha_{i-1}$ with respect to $P_i$ where $\alpha_0 = \alpha$.  Therefore $\alpha_i$ is obtained by replacing a sequence of subpaths of $\alpha_{i-1}$ by the straight line joining the endpoints of the subpaths.  Thus we may assume that any modulus of continuity of $\alpha$ will also serve as a modulus of continuity of $\alpha_i$ for all $i$.

    There exists a subsequence $i_n$ such that $\alpha_{i_n}$ converges uniformly to a path $\beta:\bigl([0,1],0,1\bigr)\to\bigl(X,x,y\bigr)$.   (It will be a consequence of Proposition \ref{unique} that the sequence $\alpha_n$ converges without actually needing to pass to a subsequence.)

    We need to show that $\beta$ is homotopic to $\alpha$.  Recall that $X_l$ embeds into $\mathbb R^3$ and $\pi_1\bigl(X_l, \pi_l\circ\alpha(0)\bigr)$ embeds in $\lim\limits_\leftarrow \pi_1\bigl(\mathcal N_{1/m} (X_l), \pi_l\circ\alpha(0)\bigr)$  where we consider the neighborhoods taken in $\mathbb R^3$.  Therefore a path is homotopic to $\alpha$ in $X_l$ if and only if it is homotopic to $\alpha$ in $\mathcal N_{1/m} (X_l)$ for all $m$.  Fixing $m$, there exists an $n(m)$ such that $\pi_l\circ \alpha_{i_{n(m)}}$ is homotopic to $\pi_l\circ\beta$ in $\mathcal N_{1/m} (X_l)$ by a straight-line homotopy.  Since this holds true for all $m$ and $\pi_l\circ \alpha_{i_{n(m)}}$ is homotopic to $\pi_l\circ\alpha$ in $X_l$; we have that $\pi_l\circ \alpha$ is homotopic to $\pi_l\circ\beta$.  Thus $\alpha$ is homotopic to $\beta$.

    We must now show that $\beta$ has minimal total oscillation in its homotopy class. Suppose that for some $\gamma$ homotopic to $\beta$ there exists an $i$ and $\epsilon>0$ such that $o\bigl(\beta, (P_i,Q_i)\bigr) - o\bigl(\gamma, (P_i,Q_i)\bigr)\geq \epsilon$.

    By Corollary \ref{small partial oscillation difference}, there exists a $\delta >0$ such that for all paths $\eta:[0,1]\to X$ with $d\bigl(\eta(t), \beta(t)\bigr)<\delta$ for all $t$, we have that $\bigl|o\bigl(\beta, (P_i,Q_i)\bigr) - o\bigl(\eta, (P_i,Q_i)\bigr)\bigr|< \frac{\epsilon}{2}$.  Thus for all $n$ sufficiently large, $\bigl|o\bigl(\beta, (P_i,Q_i)\bigr) - o\bigl(\alpha_{i_n}, (P_i,Q_i)\bigr)\bigr|< \frac{\epsilon}{2}$.

    By construction, $\alpha_{i_n}$ is $P_i$-reduced for $i\leq i_n$. Hence for all sufficiently large $n$, we have  $o\bigl(\alpha_{i_n}, (P_i,Q_i)\bigr) \leq  o\bigl(\gamma, (P_i,Q_i)\bigr)< o\bigl(\beta, (P_i,Q_i)\bigr)$.  Thus $\epsilon\leq o\bigl(\beta, (P_i,Q_i)\bigr) - o\bigl(\gamma, (P_i,Q_i)\bigr) \leq o\bigl(\beta, (P_i,Q_i)\bigr) - o\bigl(\alpha_{n_i}, (P_i,Q_i)\bigr)\leq \frac{\epsilon}{2}$ which is a contradiction.  Therefore $\beta$ has minimal total oscillation in its homotopy class.
\end{proof}

\begin{defn}
A line $l$ is \emph{transverse} to $f:[a,b]\to X$ if no nonempty open interval $(c,d)\subset[a,b]$ maps into a line segment parallel to $l$.  A line $l$ is \emph{transverse} to $f:\cS\to X$ if it is transverse to $f$ restricted to each side of $\cS$.
\end{defn}

The property that $l$ is transverse to $f: [a,b]\to X$ depends on more than just the path but also upon the parametrization of $f$. No line is transverse to any path which maps a nonempty open interval to a point.

\begin{lem}
If $\{\alpha_t: [0,1]\to X\}$ is a countable set of nondegenerate oscillatory geodesics in the plane which are parameterized proportional to total oscillation, then there exists a line $l$ which is transverse to $\alpha_t$ for all $t$.
\end{lem}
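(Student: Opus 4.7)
The plan is to show, for each fixed $t$, that the set of directions $l$ which fail to be transverse to $\alpha_t$ is at most countable. Since the family $\{\alpha_t\}$ is countable, the union of these bad-direction sets remains countable, and the set of all line-directions in the plane (i.e., $\mathbb{RP}^1$) is uncountable, so some direction avoids every bad set.

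To bound the bad directions for a single $\alpha_t$, for each direction $l$ let $S_l \subset [0,1]$ denote the open set of parameters $s$ having a neighborhood that $\alpha_t$ maps into a line segment parallel to $l$. By definition, $l$ fails to be transverse to $\alpha_t$ precisely when $S_l \neq \emptyset$. I would examine the connected components of $S_l$ as $l$ varies over $\mathbb{RP}^1$. The key claim is that these components, taken across all directions, form a pairwise disjoint family of open intervals in $[0,1]$.

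Indeed, suppose components $(c_1,d_1) \subset S_{l_1}$ and $(c_2,d_2) \subset S_{l_2}$ overlap with $l_1 \neq l_2$. On the overlap, $\alpha_t$ lies simultaneously in a line parallel to $l_1$ and one parallel to $l_2$; these non-parallel lines meet in at most a single point, so $\alpha_t$ would be constant on a nonempty open sub-interval. This is impossible because $\alpha_t$ is parameterized proportional to total oscillation and is nondegenerate: if $\alpha_t|_{[a,b]}$ were constant with $a<b$, then $\mathcal T\bigl(\alpha_t|_{[0,a]}\bigr)=\mathcal T\bigl(\alpha_t|_{[0,b]}\bigr)$, giving $a\cdot\mathcal T(\alpha_t)=b\cdot\mathcal T(\alpha_t)$, which forces $a=b$ since $\mathcal T(\alpha_t)>0$. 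Components associated to a single direction are disjoint as components of one open set, so the whole family across all directions is pairwise disjoint.

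A pairwise disjoint family of nondegenerate open intervals in $[0,1]$ is at most countable. Each bad direction $l$ contributes at least one component of its (nonempty) $S_l$, and different bad directions contribute disjoint components, so the set of bad directions for $\alpha_t$ is at most countable. Taking the union over the countable family $\{\alpha_t\}$ yields a countable set of bad directions in $\mathbb{RP}^1$, and any $l$ outside this set is transverse to every $\alpha_t$. The only step that requires genuine care is the exclusion of constant subpaths, which is precisely where the hypothesis of parameterization proportional to total oscillation is used; everything else is disjoint-interval counting.
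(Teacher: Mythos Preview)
Your proof is correct and follows essentially the same approach as the paper: both arguments show that the bad directions for a single $\alpha_t$ are indexed by a pairwise disjoint family of open subintervals of $[0,1]$ (hence countable), and then take the countable union over $t$. Your treatment is in fact a bit more explicit than the paper's about why intervals corresponding to distinct directions cannot overlap, correctly isolating the no-constant-subpath consequence of parametrization proportional to total oscillation as the crucial point.
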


\begin{proof}  Let  $\bigl\{(a_i,b_i)\mid i\in J_t\bigr\}$ be a set of maximal nonempty open intervals of $[0,1]$ such that $\alpha_t$ maps each interval into a line segment in the plane.   Notice that $(a_i,b_i)\cap (a_j,b_j)= \emptyset$ for $i\neq j$, since the intervals were maximal.  Thus $J_t$ must be countable (a compact interval can only contain countably many disjoint open subintervals).  Since $\alpha_t$ is parameterized proportional to total oscillation, $\alpha_t\bigl((a_i,b_i)\bigr)$ maps into a unique nondegenerate line segment. Thus there are only countably many directions which are not transverse to $\alpha_t$ for a fixed $t$.

Since the set of paths is countable and the set of directions in the plane is uncountable the lemma follows.

\end{proof}

\begin{defn}
 Let $\proj_x, \proj_y: \mathbb R^2 \to \mathbb R$ be the projections of the Euclidean plane onto the $x$-axis and $y$-axis respectively.

\end{defn}

\begin{thm}\label{unique}
There is a unique, up to reparametrization, oscillatory geodesic representative for every homotopy class of paths in a planar continuum.
\end{thm}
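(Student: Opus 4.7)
The plan is to take any two oscillatory geodesic representatives of a given path class, parametrize both proportional to total oscillation, and use two non-parallel transverse directions to force them to be reparametrizations of each other. Let $\alpha,\beta:[0,1]\to X$ be two oscillatory geodesics in the same homotopy class, each parametrized proportional to total oscillation; in particular neither has a constant open subpath. The preceding lemma (and its proof, which shows only countably many directions fail to be transverse to a given path) applied to the finite collection $\{\alpha,\beta\}$ produces two non-parallel lines $l_1$ and $l_2$ each transverse to both paths.

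By Lemma~\ref{oscillatory-lreduced}, the projections $\pi_{l_i}\circ\alpha$ and $\pi_{l_i}\circ\beta$ are reduced paths in $X_{l_i}$; transversality of $l_i$ combined with the absence of constant open subpaths of $\alpha,\beta$ ensures that neither projection is constant on any open subinterval. Composing the homotopy between $\alpha$ and $\beta$ in $X$ with $\pi_{l_i}$ produces a homotopy between their projections in $X_{l_i}$, so Theorem~\ref{reducedloop-thm} yields a reparametrization $\phi_i:[0,1]\to[0,1]$ with $\pi_{l_i}\circ\alpha = \pi_{l_i}\circ\beta\circ\phi_i$ for $i=1,2$. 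Once one knows $\phi_1=\phi_2=:\phi$, then for every $s$ the points $\alpha(s)$ and $\beta(\phi(s))$ lie in a common $l_1$-decomposition element and a common $l_2$-decomposition element; each such element is a (possibly degenerate) line segment parallel to $l_i$, and since $l_1\not\parallel l_2$ two such segments meet in at most one point, forcing $\alpha(s)=\beta(\phi(s))$ and exhibiting $\alpha$ and $\beta$ as reparametrizations of each other.

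The main obstacle is establishing $\phi_1=\phi_2$. I would approach this by applying Lemma~\ref{straight line} separately for $l_1$ and $l_2$ to obtain $l_i$-straight-line homotopies between $\alpha$ and $\beta\circ\phi_i^{-1}$, and exploiting the fact that oscillation with respect to any hyperplane pair parallel to $l_i$ is invariant under an $l_i$-straight-line homotopy. Combined with the continuity of oscillation in the line-pair parameter (Proposition~\ref{three parameters}), the density of $\{(P_i,Q_i)\}$ among all parallel hyperplane pairs, and the identities $\mathcal T(\alpha|_{[0,s]})=s\cdot\mathcal T(\alpha)=s\cdot\mathcal T(\beta)=\mathcal T(\beta|_{[0,s]})$ arising from parametrization proportional to total oscillation, I expect $\phi_1$ and $\phi_2$ to both coincide with the canonical total-oscillation reparametrization of the common projected reduced path, and hence with each other. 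The subtlety is that the reduced representative of a path in a one-dimensional space is only unique up to reparametrization fixing constant subpaths, and the hard work lies in using transversality and the total-oscillation normalization to eliminate this ambiguity.
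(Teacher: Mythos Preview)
Your two-direction strategy is appealing, but the gap you flag---proving $\phi_1=\phi_2$---is genuine and the tools you list do not close it. An $l_i$-straight-line homotopy only equates $o\bigl(\alpha|_{[0,s]},(P,Q)\bigr)$ with $o\bigl(\beta|_{[0,\phi_i(s)]},(P,Q)\bigr)$ for pairs $(P,Q)$ parallel to $l_i$; it gives no control over the contribution of line pairs in other directions. Since $\mathcal T$ sums over a dense family of pairs in \emph{all} directions, and each direction would supply its own reparametrization $\phi_l$, you obtain no equation linking $\phi_1$ to $\phi_2$. Your phrase ``canonical total-oscillation reparametrization of the common projected reduced path'' conflates two different quotients: the projected paths live in $X_{l_1}$ and $X_{l_2}$ respectively, while $\mathcal T$ is defined on $X$, not on either quotient, so there is nothing canonical to match against. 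Without $\phi_1=\phi_2$ the argument collapses, and nothing you invoke (Proposition~\ref{three parameters}, density, proportional parametrization) bridges the two directions.

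The paper's proof uses a \emph{single} transverse direction and replaces the reparametrization-matching problem with a geometric contradiction. One first reduces, via the closed set $S$ of parameters at which $\alpha$ and $\beta$ agree as homotopy-initial segments, to the situation where (after reparametrizing so $\pi_l\circ\alpha=\pi_l\circ\beta$) one has $\alpha(t)\neq\beta(t)$ for all $t\in(0,1)$. Since $\alpha(t)$ and $\beta(t)$ lie on a common non-horizontal line and never coincide, one path---say $\alpha$---has strictly larger $y$-coordinate throughout $(0,1)$. Choose a horizontal line $k$ strictly between $\max\proj_y\circ\beta$ and $\max\proj_y\circ\alpha$; on the component $(a,b)$ of $\alpha^{-1}\bigl(\{y>c\}\bigr)$ containing the maximum, each $l$-segment $\bigl[\alpha(s),\beta(s)\bigr]$ crosses $k$, so truncating the straight-line homotopy at $k$ carries $\alpha|_{[a,b]}$ into $k\cap X$. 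Hence $\pi_k\circ\alpha|_{[a,b]}$ is nullhomotopic, and by Lemma~\ref{shortening} the oscillatory geodesic $\alpha|_{[a,b]}$ is homotopic to the segment $\bigl[\alpha(a),\alpha(b)\bigr]$ without being contained in it, contradicting Lemma~\ref{lineunique}. This ``highest point'' argument is the missing idea; it sidesteps entirely the need to compare reparametrizations arising from different directions.
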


\begin{proof}
    Let $X$ be a planar continuum.  Theorem \ref{existence} proved the existence of oscillatory geodesics; thus, we need only show uniqueness.  Suppose that $\alpha, \beta: \bigl([0,1], 0,1\bigr) \to \bigl(X, x_0,x_1\bigr)$ are oscillatory geodesics, which are homotopic relative to endpoints and parameterized proportional to total oscillation.

    Let $S$ be the set of points $s\in[0,1]$ with the property that there exists a $t\in[0,1]$ such that $\alpha(s) = \beta(t)$ and the paths $\alpha|_{[0,s]}$ and $\beta|_{[0,t]}$ are homotopic.  If the subpaths $\alpha|_{[0,s]}$ and $\beta|_{[0,t]}$ are homotopic, then $\mathcal T\bigl(\alpha|_{[0,s]}\bigr) =  T\bigl(\beta|_{[0,t]}\bigr)$ since both are oscillatory geodesics by Corollary \ref{subpaths}.  Thus $s\mathcal T(\alpha) = \mathcal T\bigl(\alpha|_{[0,s]}\bigr) =  T\bigl(\beta|_{[0,t]}\bigr)= t\mathcal T(\beta)$ which implies that $s= t$.

    Suppose that $s_n$ is a sequence of points in $S$ converging to $s$.  Then $\alpha(s) = \beta(s)$.  Since the paths $\alpha|_{[0,s_n]}$ and $\beta|_{[0,s_n]}$ are homotopic and planar sets are shape injective (see \cite{FischerZastrow2005}), $\alpha|_{[0,s]}$ and $\beta|_{[0,s]}$ are homotopic and $s\in S$.  Thus $S$ is a closed subset of $[0,1]$.  If $S=[0,1]$, then $\alpha=\beta$ and we are done.  Otherwise by passing to a complimentary component of $S$ in $[0,1]$, we may assume that for every $s,t\in [0,1]$ if $\alpha(s) = \beta(t) $ then $\alpha|_{[0,s]}$ and $\beta|_{[0,t]}$ are not homotopic.

     Lemma \ref{lineunique} implies that neither $\alpha$ nor $\beta$ parameterizes a line segment.  After composition with a rotation and reflection of the plane, we may assume that $\bigl[\alpha(0),\alpha(1)\bigr]$ is contained in the line $\{y=0\}$ and $\im(\alpha)\cup\im(\beta)$ intersects the open half space $\bigl\{(x,y)\in \mathbb R^2 \mid y>0\bigr\}$.

    Let $l$ be a non-horizontal line in the plane that is transverse to both $\alpha$ and $\beta$.  Lemma \ref{straight line} implies that by passing to reparametrizations of $\alpha$ and $\beta$, we may assume that  $\pi_l\circ\alpha(t) = \pi_l\circ\beta(t)$ for all $t\in[0,1]$.  By our choice of $\alpha$ and $\beta$, the new parameterizations of  $\alpha$ and $\beta$ have the property that $\alpha(t) \neq \beta(t)$ for all $t\in(0,1)$.  Thus we can assume, without loss of generality,  that $\proj_y\bigl(\alpha(s)\bigr)>\proj_y\bigl(\beta(s)\bigr)$ for all $s\in(0,1)$.

    Fix $t_0\in(0,1)$ such that $\proj_y\bigl(\alpha(s)\bigr)\leq \proj_y\bigl(\alpha(t_0)\bigr)$ for all $s\in [0,1]$. Since $\proj_y\bigl(\beta(s)\bigr) < \proj_y\bigl(\alpha(s)\bigr)$ for all $s\in[0,1]$, there exists $\epsilon >0$ such that $\proj_y\bigl(\beta(s)\bigr) \leq \proj_y\bigl(\alpha(t_0)\bigr) -\epsilon$ for all $s\in[0,1]$.  Let $k$ be the horizontal line $\{y = c\}$ where $c= \max\bigl\{\proj_y\bigl(\beta(s)\bigr) \mid s\in[0,1]\bigr\} +\epsilon/2$.  Then $\beta$ does not intersect $k$ but $\alpha(t_0)$ lies strictly above the line $k$ in the plane.  Let $(a,b)$ be the component of $\alpha^{-1}\bigl(\{(x,y)\mid y>c\}\bigr)$ which contains $t_0$.  Thus for all $s\in [a,b]$ the parallel line segments $[\alpha(s),\beta(s)\bigr]$ must intersect $k$  which implies that $\alpha|_{[a,b]}$ is homotopic to $\bigl[\alpha(a),\alpha(b)\bigr]$.  Since $\alpha|_{[a,b]}$ is not contained in the line segment $\bigl[\alpha(a),\alpha(b)\bigr]$, this contradicts the fact that $\alpha$ has minimal total oscillation.
\end{proof}

\begin{defn}\label{small endpoint homotopic}
    We will say that two paths $\alpha,\beta: [0,1]\to X$ are \emph{$\delta$-endpoint homotopic} in $X$, if there exists $h: [0,1]\times[0,1] \to X$ such that $h(t,0) = \alpha(t)$, $h(t,1) = \beta(t)$, and $\diam\bigl(\im(h|_{\{0\}\times [0,1]})\bigr), \diam\bigl(\im(h|_{\{1\}\times [0,1]})\bigr)<\delta$.

    We will say that paths $\alpha:[0,a]\to X$ and $\beta: [0,b]\to X$ are \emph{$\epsilon$-close} if, for all $t\geq 0$, $$\d\Bigl(\alpha\bigl(\min\{t,a\}\bigr),\beta\bigl(\min\{t,b\}\bigr)\Bigr)< \epsilon.$$
\end{defn}

\begin{defn}
    A nullhomotopic map, $f:\cS\to X$, is an \emph{oscillatory square} if $f$ restricted to each side of $\cS$ is a nondegenerate oscillatory geodesic parameterized proportional to total oscillation.  A \emph{pseudo-oscillatory square} is an oscillatory square where the left and right sides of $\cS$ are allowed to map degenerately.  The \emph{top, bottom}, and \emph{sides} of a (pseudo-)oscillatory square are the images of the top, bottom, sides of $\cS$.

    Suppose that $f: \cS \to X$ is a nullhomotopic loop and $s,t\in[0,1]$. We will say that an ordered pair $(s,t)$ \emph{subdivides} $f$, if $f$ extends to a map of $I\times I$ such that the line segment from $(s,1)$ to $(t,0)$ maps to $\bigl\{f\bigl((s,1)\bigr)=f\bigl((t,0)\bigr)\bigr\}$, i.e. maps degenerately.  In which case, we will call the ordered pair $(s,t)$ a \emph{subdivision} of $f$ and we will say that $f$ is \emph{thick} if it has no subdivisions.
\end{defn}

\begin{lem}\label{subdivisions}
    Let $f:\cS \to X$ be a pseudo-oscillatory square. If $f$ has a subdivision, then there exists subdivisions $(s_0,t_0),(s_1,t_1)$ such that $f$ has no subdivision $(s,t)$ with

    $$(s,t)\in \Bigl([0,s_0)\times [0,t_0)\Bigr) \cup \Bigl( (s_1,1] \times (t_1,1]\Bigr).$$
  \end{lem}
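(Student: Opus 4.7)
The plan is to study the set $S \subset [0,1]^2$ of all subdivisions of $f$. Once $S$ is shown to be nonempty and compact, the conclusion follows by taking extrema of the continuous functional $(s,t) \mapsto s + t$.

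By hypothesis $S$ is nonempty. To see $S$ is closed, I would first characterize membership in $S$ intrinsically: $(s,t) \in S$ exactly when $f\bigl((s,1)\bigr) = f\bigl((t,0)\bigr)$ and the ``left subloop'' $L_{(s,t)}$ based at this common point---obtained by concatenating $f$ along the top from $(s,1)$ back to $(0,1)$, down the left side, and along the bottom from $(0,0)$ to $(t,0)$---is nullhomotopic in $X$. Because $f$ is nullhomotopic, this condition is equivalent to the symmetric ``right subloop'' being nullhomotopic, and together these provide the desired extension of $f$ to $I \times I$ with the diagonal from $(s,1)$ to $(t,0)$ collapsed to a point. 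Now if $(s_n, t_n) \in S$ converges to $(s_0, t_0)$, then continuity of $f$ gives $f\bigl((s_0, 1)\bigr) = f\bigl((t_0, 0)\bigr)$, and the loops $L_{(s_n, t_n)}$ converge uniformly to $L_{(s_0, t_0)}$.

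The key step is to conclude that $L_{(s_0, t_0)}$ is nullhomotopic. Here I would invoke shape injectivity of planar continua \cite{FischerZastrow2005}: it suffices to check nullhomotopy in each neighborhood $\mathcal N_{1/m}(X) \subset \mathbb R^2$, and for $n$ sufficiently large the straight-line homotopy in $\mathbb R^2$ between $L_{(s_n, t_n)}$ and $L_{(s_0, t_0)}$, combined with a short path adjusting the basepoint from $f\bigl((s_n,1)\bigr)$ to $f\bigl((s_0,1)\bigr)$, lies entirely in $\mathcal N_{1/m}(X)$. Since $L_{(s_n, t_n)}$ is nullhomotopic in $X$ hence in $\mathcal N_{1/m}(X)$, the same holds for $L_{(s_0, t_0)}$.

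With $S$ compact and nonempty, let $(s_0, t_0) \in S$ minimize $s + t$ and $(s_1, t_1) \in S$ maximize $s + t$. Any subdivision $(s, t)$ with $s < s_0$ and $t < t_0$ would give $s + t < s_0 + t_0$, contradicting minimality of $(s_0, t_0)$, and the upper-right corner case is symmetric. The main obstacle is proving the closedness of $S$, i.e.\ that the uniform limit of nullhomotopic loops in a planar continuum remains nullhomotopic; this is where planarity of $X$ is crucially used, via shape injectivity.
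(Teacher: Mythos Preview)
Your proof is correct, and the core step---showing the set $S$ of subdivisions is closed by using shape injectivity of planar continua to pass nullhomotopy through a uniform limit of loops---is exactly what the paper does. The difference lies only in how the extremal subdivisions are extracted from the compact set $S$. You minimize and maximize the functional $s+t$, which is clean and suffices for the lemma as stated. The paper instead sets $s_0 = \inf\{s : (s,t) \in S \text{ for some } t\}$, invokes uniqueness of oscillatory geodesics (the top and bottom of $f$) to show there is a \emph{unique} $t_0$ with $(s_0,t_0)\in S$, and then defines $s_1$ as a supremum over the restricted region $\{s \geq s_0,\ t \geq t_0\}$. This more involved construction buys the additional ordering $s_0 \leq s_1$ and $t_0 \leq t_1$, which is tacitly relied upon in the proof of Lemma~\ref{close} where one forms $\alpha|_{[s_0,s_1]}$ and $\beta|_{[t_0,t_1]}$; your minimizer and maximizer of $s+t$ need not satisfy this ordering in general.
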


Note that if either $s_0$ or $ t_0$ is $0$, we are using the convention $[0,0)$ is empty.  Similarly we will assume that $(1,1]$ is also empty.

\begin{proof}
Let $(s_i, t_i)$ be a sequence of subdivisions of $f$ a pseudo-oscillatory square.  Suppose that $s_i$ converges to $s$ and $t_i$ converges to $t$.  Thus $f(s) = f(t)$.  As well, $f$ restricted to the subpath of $\cS$ between $(s_i,1)$ and $(t_i, 0)$ containing the right side of $\cS$ is a sequence of nullhomotopic loops converging to $f$ restricted to the corresponding subpath of $\cS$ between $(s,1)$ and $(t,0)$.  Thus $(s,t)$ subdivides $f$ since planar sets are shape injective.  Hence $\bigl\{ (s,t)\mid (s,t) \text{ subdivides } f\bigr\}$ is a closed subset of the plane.
Let
$$s_0=\inf\bigl\{ s\in[0,1]\mid (s,t) \text{ subdivides } f \text{ for some } t\in[0,1]\bigr\}.$$  Since the top and bottom of $f$ map to oscillatory geodesics, there exists a unique $t_0$ such that $(s_0,t_0)$ is a subdivision of $f$.  Let
$$s_1=\sup\bigl\{ s\in[s_0,1]\mid (s,t) \text{ subdivides } f \text{ for some } t\in[t_0,1]\bigr\}.$$  Let $t_1$ be the unique point such that $(s_1,t_1)$ subdivides $f$.


By the previous paragraph, $(s_0,t_0)$, $(s_1,t_1)$ subdivide $f$ and, by construction, satisfy the desired maximality condition.
\end{proof}

\begin{mal}\label{mal}[Cannon and Conner] Suppose that $f:\bbS\to M$ is a nullhomotopic mapping from the circle $\bbS$ into a one-dimensional continuum $M$. Then there is an upper
semicontinuous decomposition $H$ of $\bbS$ into compacta that has the following three properties:

\begin{enumerate}[(i)]
	
	\item The mapping $f$ is constant on each element of $H$.
	\item\label{malii} The decomposition $H$ is \emph{noncrossing}. That is,
if $h_1$ and $h_2$ are distinct elements of $H$, then
the convex hulls $\hull(h_1)$ and $\hull(h_2)$ of $h_1$ and
$h_2$ in the unit disc are disjoint. [Equivalently, $h_1$
does not separate $h_2$ on $\bbS$.]

	\item\label{maliii} The decomposition $H$ is \emph{filling}. That is,
the unit disc is the union of the convex hulls $\hull(h)$
of the elements $h\in H$.

\end{enumerate}
\end{mal}

We will refer the interested reader to \cite{cc4} for a complete proof.  However, we will give a sketch of how $H$ is constructed.

\begin{proof}[Sketch of proof]
Since $M$ is one-dimensional, the map $f$ factors through a map $f':\mathbb S^1 \to D$ where $D$ is a planar dendrite.  (A dendrite is a connected, locally connected, compact metric space that contains no simple closed curve.)  Then $H=\{f^{'-1}(x)\mid x\in D\}$.
\end{proof}

We require a version of the Mapping Analysis Lemma for planar sets to study how oscillatory geodesics in the plane vary with their endpoints. We will present the planar version in terms of oscillatory squares instead of the unit circle.  Requiring that sides of $\cS$ map nicely, i.e. are oscillatory geodesics, allows us to understand how elements of the upper semicontinuous decomposition intersect the sides of $\cS$.

\begin{lem}[Mapping Analysis Lemma for planar maps] \label{malPlanar} Suppose that $f:\cS\to X$ is a oscillatory square in a planar continuum and $l$ is a line transverse to $f$. Then there exists an upper semicontinuous decomposition $H$ of $\cS$ into compacta that has the following properties:

\begin{enumerate}[(i)]

	\item\label{malpi} The mapping $f$ takes each element of $H$ into a line segment in $X$ parallel to $l$.
	\item\label{malpii} The decomposition $H$ is \emph{noncrossing}. That is, if $h_1$ and $h_2$ are distinct elements of $H$, then the convex hulls $\hull(h_1)$ and $\hull(h_2)$ of $h_1$ and $h_2$ in the plane are disjoint.
	\item\label{malpiii}  The decomposition $H$ is \emph{filling}. That is,
the unit square is the union of the convex hulls $\hull(h)$
of the elements $h\in H$.
	\item\label{malpiv} Every element of $H$ intersects each side of $\cS$ in at most one point and no element of $H$ is contained in the interior of a side of $\cS$.

\end{enumerate}
\end{lem}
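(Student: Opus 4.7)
My plan is to reduce to the one-dimensional Mapping Analysis Lemma by composing $f$ with the delineation map. The map $\pi_l\circ f\colon\cS\to X_l$ is a nullhomotopic loop into the one-dimensional Peano continuum $X_l$. Identifying $\cS$ with $\bbS$ via a homeomorphism that extends to a homeomorphism $[0,1]\times[0,1]\to\mathbb D$, apply the Mapping Analysis Lemma to $\pi_l\circ f$ to obtain an upper semicontinuous decomposition $H$ of $\cS$ into compacta on which $\pi_l\circ f$ is constant, with the noncrossing and filling properties in the disc. From the sketch of proof of that lemma, $H$ arises from a factorization $\pi_l\circ f = g\circ\phi$ where $\phi\colon\cS\to D$ is onto a planar dendrite and $H = \{\phi^{-1}(y) : y\in D\}$.

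Property (i) is immediate: every $\pi_l$-fibre is either a point or a maximal line segment parallel to $l$, and since $\pi_l\circ f$ is constant on each $h\in H$, $f(h)$ lies in such a fibre. Properties (ii) and (iii) transfer from the disc to the unit square by convexity of $[0,1]\times[0,1]$: two subsets of $\cS$ have disjoint convex hulls in $\mathbb R^2$ iff they are cyclically non-separating on $\cS$ (since chords of a convex polygon cross iff their endpoints alternate on the boundary), and the images of the disc convex hulls under the identifying homeomorphism cover the square while being contained in the planar convex hulls of the corresponding elements of $H$.

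The heart of the argument is (iv). For the first clause, suppose $a\neq b$ lie in a common element $h\in H$ on a single side $s$ of $\cS$. Since $\phi(a)=\phi(b)$, the subarc of $s$ from $a$ to $b$ maps under $\phi$ to a loop in $D$, which is nullhomotopic because $D$ is a dendrite; composing with $g$, the restriction of $\pi_l\circ f$ to this subarc is nullhomotopic in $X_l$. By Lemma \ref{oscillatory-lreduced}, $f|_s$ is $l$-reduced, so $\pi_l\circ f|_s$ is a reduced path and hence so is its restriction to any subarc. The uniqueness clause of Theorem \ref{reducedloop-thm} forces a reduced nullhomotopic path to be constant, so $f$ sends the entire subarc into a single maximal line segment parallel to $l$, contradicting the transversality of $l$ to $f|_s$. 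The first clause follows, and in particular any element of $H$ entirely contained in a single side must be a singleton.

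The second clause excludes singletons of $H$ lying in the open interior of a side. The main technical obstacle is refining the decomposition, without disturbing (i)--(iii), so that the only singletons arise at vertices of $\cS$; this amounts to a careful surgery on the dendrite factorization $\phi$ near points whose $\phi$-fibre is a single interior point of a side, pruning the corresponding non-branching sub-arc of $\phi(s)\subset D$ and absorbing it into the nearest multi-preimage point while maintaining upper semicontinuity, noncrossing, and filling. Once this refinement is in place, every singleton element of $H$ sits at a vertex of $\cS$, which meets two sides, so no element lies entirely in the interior of a single side.
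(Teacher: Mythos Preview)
Your overall strategy---pull back the one-dimensional Mapping Analysis Lemma along $\pi_l$---is exactly the paper's, and your treatment of (i), (iii), and the first clause of (iv) is essentially the same as the paper's (the paper phrases the first clause via Lemma~\ref{shortening} rather than Theorem~\ref{reducedloop-thm}, but the content is identical). One small wrinkle: your argument for (ii) asserts that non-separation on $\cS$ is equivalent to disjoint convex hulls in the square, but this fails when an element has two points on the same straight side (the segment between them then contains other boundary points). The paper avoids this by proving the first clause of (iv) \emph{before} (ii) and invoking it; you should reorder accordingly.

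The genuine gap is the second clause of (iv). You propose to \emph{modify} $H$ by a ``surgery'' that absorbs interior singletons into nearby multi-point elements, but you give no construction, and it is not clear such a modification preserves upper semicontinuity and filling (there could a priori be uncountably many such singletons, accumulating in complicated ways). The paper's point is that no modification is needed: the decomposition $H$ coming straight from the dendrite factorization already has no interior singletons. The argument is short. By the first clause of (iv), your map $\phi$ is injective on each side $s$ of $\cS$, so $\phi(s)$ is an arc in the dendrite $D$. If $p$ lies in the interior of $s$, then $\phi(p)$ lies in the interior of this arc and hence is a cut point of $D$ (arcs in dendrites are unique). Since $\phi\colon\cS\to D$ is a continuous surjection from a space with no cut points onto $D$, the fibre $\phi^{-1}(\phi(p))$ cannot be a single point, so the element of $H$ through $p$ meets another side. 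Replace your surgery paragraph with this observation and the proof is complete.
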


\begin{proof}
Fix $l$ a line transverse to $f$.  Let $g: \bbS \to \cS$ be an orientation preserving homeomorphism.  Let $H'$  be the upper semicontinuous decomposition obtained by applying the Mapping Analysis Lemma to $\pi_l\circ f\circ g: \bbS \to X_l$ where $X_l$ is the one-dimensional continuum obtained by the collapsing the maximal line segments in $X$ parallel to $l$.  Let $H = \{ g(h')\mid h'\in H'\}$.

Since $\pi_l\circ f\circ g$ is constant on each element of $H'$, $\pi_l\circ f$ is constant on each element of $H$.  This implies that $f$ maps each element of $H$ into a line segment in $X$ which is (\ref{malpi}).

\begin{clm}\label{clm1}
Suppose that $x,y$ are distinct points of an element $h'\in H'$.  Then $g(x), g(y)$ are not contained in the same side of $\cS$.
\end{clm}

\begin{proof}[Proof of claim]  If $x,y$ are distinct points in the same element of $H'$ such that $ g(x)$ and $g(y)$ are contained in the same side of $\cS$, then $f|_{\bigl[ g(x), g(y)\bigr]}$ is an oscillatory geodesic that, by Lemma \ref{shortening}, is homotopic to the interval $\bigl[ f\circ g(x), f\circ g(y)\bigr]$.  Thus $f|_{\bigl[ g(x), g(y)\bigr]}$ is a parametrization of the interval $\bigl[ f\circ g(x), f\circ g(y)\bigr]$ which contradicts the hypothesis that $l$ is transverse to $f$.
\end{proof}

By Claim \ref{clm1}, $H$ satisfies the first part of (\ref{malpiv}).  This implies that the only way an element of $H$ could be contained in the interior of a side of $\cS$ is if it is a single point of $\cS$.

From the proof of the Mapping Analysis Lemma, $\pi_l\circ f \circ g$ factors through a surjective map $\bar f : \mathbb S^1 \to D$ to a planar dendrite, $D$.  For any point $x$ such that $D\backslash \{x\}$ has two components, $|\bar f^{-1}(x)|>1$ since no point separates $\mathbb S^1$.  Recall $f$ restricted to each side of $\cS$ is an oscillatory geodesic parameterized proportional to total oscillation and $l$ is transverse to $f$ restricted to that side; thus $\bar f\circ g^{-1}$ must map each side of $\cS$ injectively into $D$.   Hence every point on the interior of a side of $\cS$ must have image which separates $D$ into at least two components.  Thus $H$ satisfies (\ref{malpiv}).

Property (\ref{malii}) of the Mapping Analysis Lemma implies that $h_1$
does not separate $h_2$ on $\cS$ for any $h_1,h_2\in H$ (not separating is preserve under homeomorphisms). Since each element of $H$ intersects each side of $\cS$ in at most one point, property (\ref{malpii}) holds for $H$.

The proof of property (\ref{maliii}) is exactly the same as the proof of the corresponding property for the Mapping Analysis Lemma for one-dimensional spaces.  We will sketch the proof here and refer the reader to \cite[p. 61-62]{cc4} for details.  The upper semicontinuous decomposition $H$ extends to a cellular decomposition $\tilde H$ of $\mathbb R^2$ whose non-degenerate elements are $\{\hull(h) \mid h\in H\}$.  Let $\pi: \mathbb R^2\to\mathbb R^2$ be the quotient map corresponding to $\tilde H$.  Notice that $\pi(\cS)$ is contractible, since it factors through a dendrite.  If $H$ wasn't filling then, then $\pi(\cS)$ would separate $\pi(\mathbb R^2\backslash [0,1]\times[0,1])$ and $\pi\bigl([0,1]\times[0,1]\bigr)\backslash \pi(\cS)$ which would be a contradiction.

\end{proof}

\begin{lem}\label{once}
    Let $f:\cS \to X$ be a thick oscillatory square in a planar continuum $X$.  Suppose that for some line $k$ in the plane $f|_{\{0,1\}\times [0,1]}$ maps into a single component of $\mathbb R^2\backslash k$.  Then $\im(f)$ is contained in the closure of a single component of $\mathbb R^2\backslash k$.
\end{lem}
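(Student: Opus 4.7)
The plan is to argue by contradiction. Suppose $\im(f)\cap C'\neq\emptyset$, where $C'$ denotes the component of $\mathbb R^2\setminus k$ opposite the one containing $f(\{0,1\}\times[0,1])$; I will produce a pair $(s,t)\in(0,1)^2$ that subdivides $f$, contradicting thickness.

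First, I claim that both the top and the bottom of $f$ must cross $k$ into $C'$. Since $f$ is nullhomotopic, $f|_{\text{top}}$ is homotopic rel endpoints in $X$ to the concatenation $(f|_{\text{left}})^{-1}\ast f|_{\text{bottom}}\ast f|_{\text{right}}$. If $f|_{\text{bottom}}$ remained in $\cl(C)$, this concatenation would be a representative of the top's homotopy class lying entirely in $\cl(C)$, and hence would have oscillation zero for every pair of lines $(P,Q)$ parallel to $k$ placed strictly on the $C'$ side of $k$. But $f|_{\text{top}}$, being an oscillatory geodesic that crosses $k$ into $C'$, has strictly positive oscillation for some such $(P,Q)$, contradicting Lemma \ref{reduced oscillation}. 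The symmetric argument completes the claim.

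Second, I apply Lemma \ref{malPlanar} with a line $l$ parallel to $k$ and transverse to $f$; such an $l$ exists because, on each side of $\cS$, only countably many parallel directions fail transversality. In the resulting decomposition $H$ of $\cS$, each $f(h)$ lies on a single line parallel to $k$, so $f(h)$ is contained entirely in one of $C$, $k$, or $C'$. Any $h$ with $f(h)\subseteq C'$ avoids the left and right sides (they map into $C$), cannot be a singleton in the interior of a side by property (iv) of Lemma \ref{malPlanar}, and by the at-most-one-point-per-side condition must have the form $h=\{(s,1),(t,0)\}$ with $s,t\in(0,1)$. Since $f^{-1}(C')\neq\emptyset$, such a straddling element $h^{\ast}$ exists.

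Third, I convert $h^{\ast}=\{(s,1),(t,0)\}$ into a subdivision. Because the dendrite factorization used in the proof of Lemma \ref{malPlanar} makes $\pi_k\circ f$ constant on $h^{\ast}$, the points $f(s,1)$ and $f(t,0)$ lie on a common maximal $k$-parallel segment $\ell\subseteq X\cap C'$. The $\pi_k$-image of the sub-arc of $\cS$ running from $(s,1)$ through the right side to $(t,0)$ is a loop in the dendrite, hence nullhomotopic in $X_k$; appending a parametrization along $\ell$ (which $\pi_k$ collapses) preserves nullhomotopy in $X_k$, and injectivity of $\pi_{k\ast}$ lifts this to a nullhomotopy in $X$ of the loop in $X$ formed by the right sub-arc of $f$ together with the $\ell$-segment from $f(t,0)$ to $f(s,1)$. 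This yields an extension of $f$ to $I\times I$ sending the straight segment from $(s,1)$ to $(t,0)$ into $\ell$. The main remaining obstacle is to arrange $f(s,1)=f(t,0)$, which is required for that segment to map degenerately; I plan to handle this by choosing the straddling element $h^{\ast}$ extremally, parametrizing $\ell$ and examining the signed displacement along $\ell$ between $f(s,1)$ and $f(t,0)$ as $h$ ranges over the straddling elements of $U_{C'}$. Combining upper semicontinuity of $H$, the closure property of subdivisions from Lemma \ref{subdivisions}, and a compactness argument, I expect to extract a limiting straddling pair at which the displacement vanishes, producing a genuine subdivision of $f$ and the desired contradiction.
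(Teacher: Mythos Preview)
Your argument has two genuine gaps.

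First, you cannot in general apply Lemma \ref{malPlanar} with $l$ parallel to $k$. Transversality of $l$ to $f$ is a condition on the \emph{direction} of $l$, not on its position: $l$ is transverse to $f$ exactly when no open subinterval of a side of $\cS$ maps into a segment parallel to $l$. Your justification (``only countably many parallel directions fail transversality'') is correct but irrelevant here, because you have already committed to the single direction of $k$. If some side of the oscillatory square contains a subarc lying on a line parallel to $k$---and nothing in the hypotheses rules this out---then \emph{no} line parallel to $k$ is transverse to $f$, and Lemma \ref{malPlanar} is unavailable in that direction. The paper deliberately chooses $l$ transverse to $f$ and \emph{not} parallel to $k$.

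Second, even granting the decomposition, your plan to extract a subdivision by locating a straddling element $h^\ast=\{(s,1),(t,0)\}$ at which the displacement along $\ell$ vanishes does not work. There is no mechanism forcing that signed displacement to change sign or to hit zero: it is entirely consistent for $f(s,1)$ to lie strictly to one side of $f(t,0)$ on their common $k$-parallel line for \emph{every} straddling pair. Upper semicontinuity and compactness produce limit elements, but those limits need not have zero displacement either. Indeed the paper, working with $l$ not parallel to $k$, establishes exactly the analogous strict dichotomy in the $y$-direction (one of the inequalities (1) or (2) holds uniformly across the straddling interval) and does \emph{not} try to squeeze a subdivision out of it. Thickness is used only to secure the strict sign; the contradiction then comes from the oscillatory-geodesic property of the top: one translates $k$ to a line $\bar k$ that the bottom no longer overshoots on the relevant interval, so every $l$-segment from top to bottom crosses $\bar k$, and a maximal subarc of the top beyond $\bar k$ is $l$-straight-line homotopic to a segment of $\bar k$ it is not contained in---contradicting minimal oscillation, not thickness.

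Your first step (both top and bottom must enter $C'$) is correct and is morally the content of the paper's preliminary Claim, but the endgame has to target the oscillatory-geodesic property rather than hunt for a subdivision.
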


\begin{proof}
Suppose that $k$ is a line in the plane such that $f|_{\{0,1\}\times [0,1]}$ maps into a single component of $\mathbb R^2\backslash k$.  After rotating the plane if necessary, we may assume that $k$ is a horizontal line.
Fix $l$, a line transverse to $f$ and not parallel to $k$. Let  $H$ be an upper semicontinuous decomposition of $\cS$ satisfying the conclusions of Lemma \ref{malPlanar} for $f$ and $l$.

By Condition (\ref{malpi}), each element of $H$ maps into an interval contained in $X$ which is parallel to $l$.  By Condition (\ref{malpiv}), every element of $H$ which intersects the interior of a side of $\cS$ must also intersect one of the other three sides of $\cS$.

Let $T$ be the component of $\mathbb R^2\backslash k$ which is disjoint from $\im\bigl(f|_{\{0,1\}\times [0,1]}\bigr)$.  Since the claim remains unchanged by reflecting the plane about $k$, we may assume that $\proj_y\bigl(f(i,a)\bigr)< \proj_y (k)$ for every $(i,a) \in \{0,1\}\times [0,1]$.

\begin{clm}
If every element of $H$ which intersects the top of $\cS$ also intersects $\{0,1\}\times[0,1]$, then $\im(f)$ is contained in the closure of a single component of $\mathbb R^2\backslash k$.
\end{clm}

\begin{proof}[Proof of claim.]

Suppose there exists an open interval $(a,b)$ contained in $[0,1]$ with the property that $f|_{(a,b)\times \{1\}}$ maps into $T$.  We may assume that $(a,b)$ is a maximal such interval.  For every $x\in (a,b)\times \{1\}$, the  line segment in $X$ parallel to $l$ containing $f(x)$ must intersect $k$, since $k$ separates $f\big((a,b)\times \{1\}\bigr)$ and $f\bigl(\{0,1\}\times[0,1]\bigr)$.  Notice that $f(a), f(b)\in k$.  Thus $f|_{[a,b]\times \{1\}}$ is $l$-straight-line homotopic to but not contained in the line segment $[f(a),f(b)]$, which contradicts our hypothesis that $f_{[0,1]\times\{1\}}$ was an oscillatory geodesic.

The hypothesis that every element of $H$ which intersects the top of $\cS$ also intersects $\{0,1\}\times[0,1]$ implies that   every element of $H$ which intersects the bottom of $\cS$ also intersects $\{0,1\}\times[0,1]$. So after applying the same argument to the bottom of $\cS$ the subclaim is proved.

\end{proof}

By way of contradiction, suppose that $\im(f)$ is not contained in the closure of a single component of $\mathbb R^2\backslash k$. Then there exists an element of $H$ which intersects only the top and bottom of $\cS$.

Since $H$ is noncrossing, there are a maximal intervals $(a_0,b_0), (a_1,b_1)\subset [0,1]$ and an orientation preserving homeomorphism $\sigma:  (a_1,b_1)\to (a_0,b_0)$ such that $\bigl\{(x,1),\bigl(\sigma(x),0\bigr)\bigr\}\in H$ for every $x\in(a_1,b_1)$.

\begin{figure}[h]\label{minimizing}
\centering
\def\svgwidth{\columnwidth}
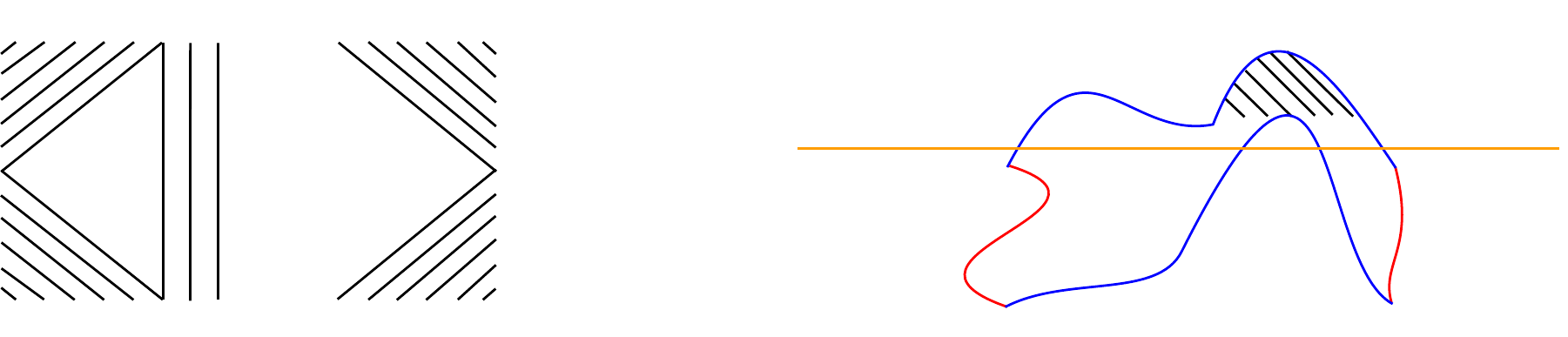\caption{}
\end{figure}

If $\proj_y\bigl( f( x,1) \bigr) =\proj_y\bigl( f (\sigma(x),0)\bigr)$ for some $x\in (a_1,b_1)$, then $f(x,1) = f(\sigma(x),0)$ (they both lie on non-horizontal line) and $\bigl(x, \sigma(x)\bigr)$  would subdivide $f$. Thus, since $f$ is thick, $\proj_y\circ f$ is non-constant on $\bigl\{(x,1),\bigl(\sigma(x),0\bigr)\bigr\}$ for all $x\in(a_1,b_1)$ which implies that one of the following two inequality holds.

\begin{equation}\proj_y\bigl( f (x,1) \bigr) < \proj_y\bigl( f (\sigma(x),0)\bigr)\text{ for all } x\in (a_1,b_1)\end{equation} or

\begin{equation}\label{aa}\proj_y\bigl( f (x,1) \bigr) > \proj_y\bigl( f (\sigma(x),0)\bigr)\text{ for all } x\in (a_1,b_1)\end{equation}

We will assume (\ref{aa}) holds.  The other case is similar.

Let $\delta = \max \Bigl\{\proj_y\bigl(f(x,0)\bigr) - \proj_y\bigl(k\bigr)\mid x\in[a_0,b_0]\Bigr\}$, i.e. the maximal amount that $f$ restricted to the bottom of $\cS$ goes above the line $k$. Let $\bar k$ be the horizontal line $k$ if $\delta<0$ or the positive vertical translation of $k$ by $\delta$ if $\delta\geq 0$.

Notice that by our choose of $\bar k$, $\im(f)$ is not contained in the closure of a single component of $\mathbb R^2 \backslash\bar k$.  Let $\overline T$ be the component of $\mathbb R^2\backslash \bar k$ which is disjoint from $\im\bigl(f|_{\{0,1\}\times [0,1]}\bigr)$.  Let $(a,b)$ be the maximal subinterval of $[a_1,b_1]$ such that $f|_{(a,b)\times \{1\}}$ maps into $\overline T$.

By our choice of $\bar k$, the  line segment in $X$ parallel to $l$ starting at $f(x)$ must intersect $\bar k$ for every $x\in (a,b)\times \{1\}$.  Notice that $f(a), f(b)\in \bar k$.  Thus $f|_{[a,b]\times \{1\}}$ is $l$-straight-line homotopic to but not contained in the line segment $[f(a),f(b)]$ which contradicts our hypothesis that $f_{[0,1]\times\{1\}}$ was an oscillatory geodesic, which completes the proof of the lemma.

\end{proof}

\begin{lem}\label{thin}
Let $f:\cS \to X$ be a thick oscillatory square in a planar continuum $X$. Suppose that $k_N, k_S$ are parallel lines in the plane such that $\im(f|_{\{0,1\}\times[0,1]})$ is contained in the thin component of $\mathbb R\backslash \{k_N\cup k_S\}$.  Then $\im(f)$ is contained in the thin component of $\mathbb R\backslash \{k_N\cup k_S\}$.

\end{lem}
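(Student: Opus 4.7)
The plan is to deduce Lemma~\ref{thin} as an immediate consequence of Lemma~\ref{once} applied twice, once for each of the two parallel lines $k_N$ and $k_S$.

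First I would make the elementary geometric observation that, since $k_N$ and $k_S$ are parallel, the thin component $T$ of $\mathbb R^2\setminus(k_N\cup k_S)$ is the open strip between the two lines, and $T$ is contained in a single component $C_N$ of $\mathbb R^2\setminus k_N$ (namely the open half-plane on the $k_S$-side of $k_N$) and in a single component $C_S$ of $\mathbb R^2\setminus k_S$ (the open half-plane on the $k_N$-side of $k_S$). By hypothesis $\im\bigl(f|_{\{0,1\}\times[0,1]}\bigr)\subset T$, so in particular $f|_{\{0,1\}\times[0,1]}$ maps into the single component $C_N$ of $\mathbb R^2\setminus k_N$ and into the single component $C_S$ of $\mathbb R^2\setminus k_S$.

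Next I would apply Lemma~\ref{once} with $k=k_N$: since $f$ is a thick oscillatory square whose restriction to $\{0,1\}\times[0,1]$ maps into a single component of $\mathbb R^2\setminus k_N$, the conclusion gives $\im(f)\subset \cl(C_N)$. Symmetrically, applying Lemma~\ref{once} with $k=k_S$ yields $\im(f)\subset \cl(C_S)$. Intersecting the two, $\cl(C_N)\cap \cl(C_S)$ is exactly the closed strip bounded by $k_N$ and $k_S$, which is the closure of the thin component $T$. Hence $\im(f)$ lies in $\cl(T)$, which is the content of the lemma (with "thin component" understood up to its bounding lines, consistent with the "closure of a single component" conclusion already established in Lemma~\ref{once}).

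There is no genuine obstacle here: the argument is essentially mechanical once Lemma~\ref{once} is in hand. The only point to verify is that the hypothesis $\im\bigl(f|_{\{0,1\}\times[0,1]}\bigr)\subset T$ places the left and right sides of $f$ on one definite side of each bounding line, which is precisely what is needed to invoke Lemma~\ref{once} for both $k_N$ and $k_S$ separately.
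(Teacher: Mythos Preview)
Your approach is essentially the paper's: apply Lemma~\ref{once} once for each bounding line and intersect. The one substantive difference is that applying Lemma~\ref{once} directly to $k_N$ and $k_S$ only yields $\im(f)\subset\cl(C_N)\cap\cl(C_S)=\cl(T)$, the \emph{closed} strip, whereas the lemma as stated asserts containment in the open thin component. You flag this and argue it is harmless, but the paper actually proves the open statement: it first chooses auxiliary parallel lines $\bar k_N,\bar k_S$ strictly inside the open strip (still separating $\im\bigl(f|_{\{0,1\}\times[0,1]}\bigr)$ from $k_N\cup k_S$), applies Lemma~\ref{once} to $\bar k_N$ and $\bar k_S$, and concludes $\im(f)$ lies in the closed strip between $\bar k_N$ and $\bar k_S$, which sits inside the \emph{open} strip between $k_N$ and $k_S$. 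This small shrinking trick is the only thing your argument is missing; with it, the two proofs are identical.
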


\begin{proof}

Choose two lines $\bar k_N$ and $\bar k_S$ such that $f\bigl(\{0,1\}\times[0,1]\bigr)$ is still contained in the thin component of $\mathbb R^2 \backslash \bigl\{\bar k_N\cup \bar k_S\bigr\}$ and $\bar k_N\cup \bar k_S$ is contained in the thin component of $\mathbb R^2 \backslash \bigl\{ k_N\cup k_S\bigr\}$.  Applying Lemma \ref{once} to each of $\mathbb R^2 \backslash \bar k_N$ and  $\mathbb R^2 \backslash \bar k_S$ completes the proof.

\end{proof}

We now want to prove a version of Lemma \ref{thin} which holds for pseudo-oscillatory geodesic squares.


\begin{lem}\label{reparameterization}
Let $f:\cS \to X$ be a pseudo-oscillatory square in a planar continuum $X$ and $\delta>0$.  There exists an oscillatory square $\bar f: \cS \to X$  and a monotone quotient map $\psi: \cS \to \cS$ such that $\bar f\circ\psi = f$ and $\d\bigl(f(x),\bar f(x)\bigr)\leq \delta$ for all $x\in\cS$.  If in addition $f$ is thick, then $\bar f$ is also thick.

\end{lem}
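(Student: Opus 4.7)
The plan is to proceed by a case analysis on the degeneracy of the left and right sides of $f$. The ``thick'' clause is essentially free: if $f$ is thick, then neither left nor right of $f$ can be degenerate, because if (say) $f$ sent $\{0\}\times I$ to a single point $p$, then any nullhomotopy extension $F: I\times I \to X$ of $f$ would automatically be constantly $p$ on $\{0\}\times I$, so $(0,0)$ would subdivide $f$. Hence a thick pseudo-oscillatory square is already an oscillatory square, and one may take $\bar f = f$ and $\psi = \mathrm{id}_\cS$.

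For the general (possibly non-thick) case, if both left and right of $f$ are already nondegenerate then again $\bar f = f$ works, so assume, after a cyclic relabeling, that the left side of $f$ is degenerate at a point $p$. The idea is to introduce a small ``kink'' by re-distributing a short initial segment of the bottom of $f$ to act as the left side of $\bar f$. Using uniform continuity of $f$, choose $\eta>0$ small enough that the initial $\eta$-portion of the bottom of $f$ (and, if right is also degenerate, the analogous final portion) has diameter less than $\delta$. Parameterize $\cS$ cyclically by $s\in[0,4]$ with $[0,1]$ the bottom, $[1,2]$ the right, $[2,3]$ the top, and $[3,4]$ the left, and define $\psi:\cS\to\cS$ as a piecewise affine monotone surjection which (i) collapses the degenerate left $[3,4]$ of the $f$-domain to the single codomain point $s=3$, (ii) stretches $[0,\eta]$ of the $f$-domain affinely onto $[3,4]$ of the codomain, (iii) stretches $[\eta,1]$ affinely onto $[0,1]$, and (iv) is the identity on $[1,3]$. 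Since the nontrivial fibers of $\psi$ are connected arcs on which $f$ is constant, the relation $\bar f\circ\psi = f$ unambiguously defines $\bar f$. Each side of $\bar f$ is then a reparameterization of a subpath of an oscillatory geodesic side of $f$, so is itself an oscillatory geodesic by Corollary~\ref{subpaths}; it is nondegenerate because $\eta>0$, and parameterized proportional to total oscillation because $\psi$ is affine on each piece while the original sides of $f$ are so parameterized.

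The case where both the left and right sides of $f$ are degenerate (at $p$ and $q$ respectively) is similar, and is precisely where Theorem~\ref{unique} enters: top and bottom of $f$ are both oscillatory geodesics from $p$ to $q$ and they are homotopic rel endpoints through the disk to which $f$ extends, hence coincide as parameterized paths, giving a common oscillatory geodesic $\alpha$. One then splits $\alpha$ at parameters $t_2$ near $0$ and $t_1$ near $1$, using $\alpha|_{[0,t_1]}$ and $\alpha|_{[t_1,1]}$ as the bottom and right of $\bar f$, and $\alpha|_{[t_2,1]}$ and $\alpha|_{[0,t_2]}$ (traversed in the reverse direction) as the top and left of $\bar f$; correspondingly $\psi$ collapses both degenerate sides of the $f$-domain to opposite corners of the codomain, and stretches the bottom and top of $f$ across three sides each of the codomain. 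The key distance estimate in all cases is that $\psi$ moves any parameter by at most $\max(\eta, t_2, 1-t_1)$, so by the uniform continuity of the oscillatory geodesic sides of $f$ one obtains $\d(f(x),\bar f(x))<\delta$ once these parameters are chosen sufficiently small; $\bar f$ is nullhomotopic as a reparameterization of the nullhomotopic loop $f$. The main obstacle to watch is ensuring that left of $\bar f$ is chosen so that it stays close to $p$ pointwise, rather than being an arbitrary oscillatory geodesic between the two new corners (which could wander far in a general planar continuum); the construction above solves this by taking left of $\bar f$ to be an explicit subpath of $\alpha_B$, whose diameter is controlled by $\eta$.
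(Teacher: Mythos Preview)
Your handling of the thick clause is clean and correct: if the left side is degenerate at $p$, then the segment from $(0,1)$ to $(0,0)$ is the left side itself, so any extension of $f$ to $I\times I$ maps it to $\{p\}$ and $(0,0)$ subdivides $f$. Thus a thick pseudo-oscillatory square already has both vertical sides nondegenerate and $\bar f=f$ suffices. The paper does not isolate this observation, but its construction reduces to $\psi=\mathrm{id}$ in exactly that situation, so the two agree there.

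For the general construction your route differs from the paper's. The paper uses one uniform recipe regardless of which sides are degenerate: leave the bottom and the middle $[\delta_1,1-\delta_1]$ of the top fixed, collapse each degenerate vertical side to the adjacent \emph{bottom} corner, and stretch the corresponding short end-segment $[0,\delta_1]\times\{1\}$ or $[1-\delta_1,1]\times\{1\}$ of the top across that collapsed side. No appeal to Theorem~\ref{unique} is needed. Your approach instead cases on degeneracy, redistributes from the bottom in the one-sided case, and in the both-degenerate case invokes Theorem~\ref{unique} to identify top and bottom as the same oscillatory geodesic $\alpha$; this is valid (Theorem~\ref{unique} precedes this lemma) but heavier than necessary. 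One slip: in the both-degenerate case $\psi$ stretches the top and bottom of $f$ across \emph{two} sides each of the codomain (bottom$\mapsto$bottom$+$right, top$\mapsto$left$+$top), not three.

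A subtlety you share with the paper: the assertion that the new sides of $\bar f$ are ``parameterized proportional to total oscillation because $\psi$ is affine on each piece'' does not follow, since $\mathcal T$ is only subadditive and an affine reparametrization of a proper subpath of a path so parameterized need not itself be so parameterized. What your construction does guarantee is that each side of $\bar f$ is a nondegenerate oscillatory geodesic with no constant subpath, which is what the downstream uses (transversality and Lemma~\ref{malPlanar}) actually require; a further monotone reparametrization of each side, absorbed into $\psi$, restores the literal hypothesis if desired.
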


\begin{proof}

Fix $\delta >0$ and choose $\delta_1\in(0,1/3)$ such that  $d\bigl(f(x), f(y)\bigr)< \delta$ for all $x,y \in \cS$ with $d(x,y)< 2\delta_1$.  Then we can define $\psi$ as follows.

The map $\psi$ is the identity on $\bigl([0,1]\times \{0\} \bigr)\cup\bigl( [\delta_1, 1-\delta_1]\times \{1\}\bigr)$.

If the right side of $f$ is degenerate, then $\psi$ collapses the right side of $\cS$ to $(1,0)$ and sends $[1-\delta_1, 1]\times\{1\}$ to $\bigl([1-\delta_1, 1]\times\{1\}\bigr) \cup\bigl( \{1\}\times [0,1]\bigr)$ by the an appropriate piecewise linear homeomorphism fixing $(1-\delta_1, 1)$. Otherwise $\psi$ is the identity on $\bigl([1-\delta_1, 1]\times\{1\}\bigr) \cup\bigl( \{1\}\times [0,1]\bigr)$.

If the left side of $f$ is degenerate, then $\psi$ collapses the left side of $\cS$ to $(0,0)$ and sends $[0, \delta_1]\times\{1\}$ to $\bigl([0, \delta_1]\times\{1\}\bigr) \cup\bigl( \{0\}\times [0,1]\bigr)$ by the an appropriate piecewise linear homeomorphism fixing $(\delta_1,1)$.  Otherwise $\psi$ is the identity on $\bigl([0, \delta_1]\times\{1\}\bigr) \cup\bigl( \{0\}\times [0,1]\bigr)$.

Since $\psi$ is a quotient map, it induces a map $\bar f: \cS \to X$ such that $\bar f \circ \psi = f$. It is immediate that $\bar f$ is an oscillatory square that satisfies the conclusions of the lemma.

\end{proof}

\begin{lem}\label{thinner}
Let $f:\cS \to X$ be a thick pseudo-oscillatory square in a planar continuum $X$.  Suppose that $k_N, k_S$ are parallel lines in the plane such that $\im\bigl(f|_{\{0,1\}\times[0,1]}\bigr)$ is contained in the thin component of $\mathbb R\backslash \{k_N\cup k_S\}$.  Then $\im(f)$ is contained in the thin component of $\mathbb R\backslash \{k_N\cup k_S\}$.
\end{lem}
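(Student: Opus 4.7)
The plan is to deduce the lemma from Lemma~\ref{thin} by invoking Lemma~\ref{reparameterization} to replace $f$ with a thick oscillatory square $\bar f$ whose sides still land in the thin component. The one thing I need to guard against is the following: when $f$ has a degenerate left or right side, Lemma~\ref{reparameterization} builds $\bar f$ by redistributing initial and terminal portions of the top of $f$ onto those sides, and a priori those portions could wander outside the thin component of $\mathbb R^2\backslash\{k_N\cup k_S\}$, which would break the hypothesis of Lemma~\ref{thin}. So the whole game is to calibrate the approximation parameter in Lemma~\ref{reparameterization} to the geometry of the lines $k_N$ and $k_S$.

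First I would set $\eta=\d\bigl(\im(f|_{\{0,1\}\times[0,1]}),\,k_N\cup k_S\bigr)$; this number is strictly positive because $\im(f|_{\{0,1\}\times[0,1]})$ is compact and lies in the open thin component of $\mathbb R^2\backslash\{k_N\cup k_S\}$. I would then apply Lemma~\ref{reparameterization} with $\delta=\eta/2$ to obtain a monotone quotient map $\psi:\cS\to\cS$ and an oscillatory square $\bar f:\cS\to X$ such that $\bar f\circ\psi=f$ and $\d\bigl(f(x),\bar f(x)\bigr)\leq\eta/2$ for every $x\in\cS$. That same lemma guarantees $\bar f$ is thick because $f$ is.

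Next I would verify the hypothesis of Lemma~\ref{thin} for $\bar f$: for any $x\in\{0,1\}\times[0,1]$, the planar segment from $f(x)$ to $\bar f(x)$ has length at most $\eta/2<\eta$ and so cannot meet $k_N\cup k_S$, forcing $\bar f(x)$ to lie in the same component of $\mathbb R^2\backslash\{k_N\cup k_S\}$ as $f(x)$, namely the thin one. Hence $\im\bigl(\bar f|_{\{0,1\}\times[0,1]}\bigr)$ is contained in the thin component, and Lemma~\ref{thin} applied to $\bar f$ yields $\im(\bar f)\subset$~thin component. I would finish by observing that since $\psi$ is surjective as a quotient map and $\bar f\circ\psi=f$, one has $\im(f)=\bar f\bigl(\psi(\cS)\bigr)=\bar f(\cS)=\im(\bar f)$, so $\im(f)$ lies in the thin component as required.

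The main obstacle is exactly the calibration step highlighted in the first paragraph: ensuring that the reparametrized square's new (possibly manufactured) sides do not escape the thin component. Once $\delta$ is chosen below the distance from $\im(f|_{\{0,1\}\times[0,1]})$ to $k_N\cup k_S$, every subsequent step is routine bookkeeping.
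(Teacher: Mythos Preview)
Your proof is correct and follows essentially the same approach as the paper: compute the distance from $\im\bigl(f|_{\{0,1\}\times[0,1]}\bigr)$ to $k_N\cup k_S$, apply Lemma~\ref{reparameterization} with a parameter calibrated to half that distance so the new sides of $\bar f$ stay in the thin component, invoke Lemma~\ref{thin} for $\bar f$, and conclude via $\im(f)=\im(\bar f)$. The paper's version is slightly terser, but the argument and the key calibration step are identical.
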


\begin{proof}
Suppose that $k_N, k_S$ are parallel lines in the plane such that $\im\bigl(f|_{\{0,1\}\times[0,1]}\bigr)$ is contained in the thin component of $\mathbb R\backslash \{k_N\cup k_S\}$.  Let $2\delta = \d\bigl(\im\bigl(f|_{\{0,1\}\times[0,1]}\bigr), k_N\cup k_S\bigr) >0$.

By Lemma \ref{reparameterization}, there exists a thick oscillatory square $\bar f :\cS \to X$ with the property that $\im\bigl(\bar f|_{\{0,1\}\times[0,1]}\bigr)\subset \mathcal N_\delta \Bigl(\im\bigl( f|_{\{0,1\}\times[0,1]}\bigr)\Bigr)$.  Hence $\im\bigl(\bar f|_{\{0,1\}\times[0,1]}\bigr)$ is contained in the thin component of $\mathbb R\backslash \{k_N\cup k_S\}$ by our choice of $\delta$.

Lemma \ref{thin} implies that the image of $\bar f$ is contained in the thin component of $\mathbb R\backslash \{k_N\cup k_S\}$.  Since $\im(f) = \im(\bar f)$, the image of $f$ is also contained in the thin component of $\mathbb R\backslash \{k_N\cup k_S\}$.
\end{proof}

\begin{lem}\label{weaklyclose}
For any thick pseudo-oscillatory square, $f:\cS \to X$, in a planar continuum $X$ there exists a reparametrization, $\rho : [0,1]\to [0,1]$, such that $$\d\bigl(f(x,1), f(\rho(x), 0)\bigr)< 7\max\Bigl\{\diam\Bigl(\im\bigr(f|_{\{0\}\times[0,1]}\bigr)\Bigr), \diam\Bigl(\im\bigr(f|_{\{1\}\times[0,1]}\bigr)\Bigr)\Bigr\}.$$
\end{lem}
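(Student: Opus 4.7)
The plan is to reduce to a thick oscillatory square via Lemma~\ref{reparameterization}, define a reparametrization $\rho$ using the planar Mapping Analysis Lemma (Lemma~\ref{malPlanar}), and bound displacements by applying Lemma~\ref{thinner} to a narrow strip around the line joining the two side images. Set $D := \max\bigl\{\diam\bigl(\im(f|_{\{0\}\times[0,1]})\bigr), \diam\bigl(\im(f|_{\{1\}\times[0,1]})\bigr)\bigr\}$ and fix a small $\delta > 0$. First I invoke Lemma~\ref{reparameterization} to replace $f$ by a thick oscillatory square $\bar f$ with $\d(f, \bar f) \leq \delta$ and sides of diameter at most $D + \delta$, so it suffices to prove the estimate for $\bar f$ up to an error of $2\delta$. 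Pick points $p_L$ and $p_R$ in the images of the two sides of $\bar f$, and let $m$ be the line through them. Since each side image has diameter at most $D + \delta$, every side point lies within perpendicular distance $D + \delta$ of $m$; for any $\epsilon > 0$, the two lines parallel to $m$ at perpendicular distance $D + \delta + \epsilon$ on either side bound a strip containing both side images strictly in its interior, and Lemma~\ref{thinner} places $\im(\bar f)$ entirely inside this strip of width $2(D + \delta + \epsilon)$.

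Next I pick a line $l$ transverse to $\bar f$ and close to perpendicular to $m$; this is possible because only countably many directions fail to be transverse. Applying Lemma~\ref{malPlanar} produces a decomposition $H$ of $\cS$ whose fibers map into line segments parallel to $l$, with each fiber meeting any given side of $\cS$ in at most one point. For each $x \in [0, 1]$, let $h_x$ denote the fiber through $(x, 1)$: set $\rho(x) = y$ if $h_x$ meets the bottom at the unique point $(y, 0)$, set $\rho(x) = 0$ if $h_x$ meets only the left side, and $\rho(x) = 1$ if $h_x$ meets only the right side. The noncrossing property forces these three cases to occupy consecutive subintervals of $[0, 1]$, and upper semicontinuity of $H$ (together with a short interpolation across boundary points where a fiber meets both the bottom and a side) yields a continuous monotone $\rho$.

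The distance estimate comes from strip containment: whenever $(x, 1)$ and $(y', 0)$ lie in a common fiber, both $\bar f(x, 1)$ and $\bar f(y', 0)$ lie on a single line parallel to $l$ inside the strip and are therefore separated by at most $2(D + \delta + \epsilon)\sec\theta$, where $\theta$ is the angle of $l$ from the perpendicular to $m$. When $\rho(x) = 0$ (respectively $1$) because the fiber meets only the left (respectively right) side at some $(0, s)$, the same chord bound applies between $\bar f(x, 1)$ and $\bar f(0, s)$, and the triangle inequality with $\d(\bar f(0, s), \bar f(0, 0)) \leq D + \delta$ yields the bound in this case. Pulling back to $f$ and taking $\delta, \epsilon, \theta$ small gives a bound comfortably below $7D$.

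The main obstacle will be making $\rho$ continuous across the boundaries between the three cases. When a boundary fiber meets both the bottom and a side (which upper semicontinuity permits), the natural pairings from the two adjacent cases produce different candidate values of $\rho$; bridging them requires a short interpolation interval, and verifying the displacement bound there requires an additional application of the chord bound (and, if need be, of Lemma~\ref{thinner} to an appropriate sub-loop cut off by the boundary fiber). This accumulation of triangle-inequality steps is what accounts for the gap between the cleaner estimate of roughly $3D$ available in the pure cases and the $7D$ bound stated here.
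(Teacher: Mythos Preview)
Your approach is essentially the paper's: reduce to a thick oscillatory square via Lemma~\ref{reparameterization}, confine the image to a narrow strip via Lemma~\ref{thinner}, apply Lemma~\ref{malPlanar} with a nearly-perpendicular transverse line, and read off $\rho$ from the decomposition with a triangle-inequality bound through the side images. Two cosmetic differences are worth noting. First, the paper pulls the decomposition $\bar H$ back through the quotient $\psi$ of Lemma~\ref{reparameterization} to a decomposition $H$ for $f$ itself (using $\im(\bar f)=\im(f)$), so no ``transfer back with error $2\delta$'' is needed. Second, rather than setting $\rho\equiv 0$ (resp.\ $1$) on the interval where fibers hit only the left (resp.\ right) side and then patching the resulting jump, the paper simply takes $\rho$ to be linear on $[0,a]\to[0,c]$ and $[b,1]\to[d,1]$; the key observation is that for $y\in[0,c]$ the fiber through $(y,0)$ must also meet the left side (by noncrossing against $h_{(a,1)}$), so the bound $3\delta+\delta+3\delta=7\delta$ holds on the whole linear piece without any interpolation argument.
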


\begin{proof}
    Let $\delta = \max\Bigl\{\diam\Bigl(\im\bigr(f|_{\{0\}\times[0,1]}\bigr)\Bigr), \diam\Bigl(\im\bigr(f|_{\{1\}\times[0,1]}\bigr)\Bigr)\Bigr\}$.  Fix $k_N,k_S$ parallel lines such that $\im\bigr(f|_{\{0,1\}\times[0,1]}\bigr)$ is contained in the thin component of $\mathbb R^2\backslash \{k_N\cup k_S\}$ and $\d(k_N, k_S)< 2\delta$.
	
    Let $l$ be a line which is transverse to each non-degenerate side of $f$ such that $l$ intersects the thin component of $\mathbb R^2\backslash \{k_N\cup k_S\}$ in an open interval of length less than $3\delta$.  (There are only countable many lines that are not transverse to $f$.  Hence, we can find a line that is nearly orthogonal to $k_N$ and is transverse to each nondegenerate side of $f$.)
	
    Let  $\bar f$ be the thick oscillatory square obtained from applying Lemma \ref{reparameterization} to $f$.  Notice $l$ is also transverse to $\bar f$  and we can apply Theorem \ref{malPlanar} to $\bar f$ and $l$ to obtain an upper semicontinuous decomposition $\bar H$  of $\cS$.  Let $H = \{\psi^{-1}(\bar h)\mid \bar h\in\bar H\}$ where $\psi:\cS \to \cS$ is the monotone quotient map from Lemma \ref{reparameterization}.  Then $H$ is an upper semicontinuous decomposition of $\cS$, it is still noncrossing, and each element of $H$ can only intersect the top or the bottom of $\cS$ at a unique point.

    Since $\bar f$ restricted to each element of $\bar H$ maps into a line segment parallel to $l$ and $\im (\bar f)$ is contained in the thin component of $\mathbb R^2\backslash \{k_N\cup k_S\}$, we have that the $\diam\bigl(\bar f|_{\bar h}\bigr) < 3\delta$ for every $\bar h\in \bar H$.  Since $ \bar f\circ\psi= f$, $\diam\bigl( f|_{ h}\bigr) < 3\delta$ for every $ h\in  H$.

    For $s\in\cS$, we will use $h_s$ to denote the element of $H$ containing $s$.

    Let $a = \sup\bigl\{ x\in[0,1] \mid h_{(x,1)}\cap \bigl(\{0\}\times [0,1]\bigr) \neq\emptyset\bigr\}$ and $b = \inf\bigl\{ x\in[0,1] \mid h_{(x,1)}\cap \bigl(\{1\}\times [0,1]\bigr) \neq\emptyset\bigr\}$.  Since $H$ is upper semicontinuous, $h_{(a,1)}$ also intersects the left side of $\cS$ and $h_{(b,1)}$ also intersects the right side of $\cS$. Since $H$ is noncrossing, $a\leq b$.

\begin{figure}[h]
\centering
\def\svgwidth{4in}
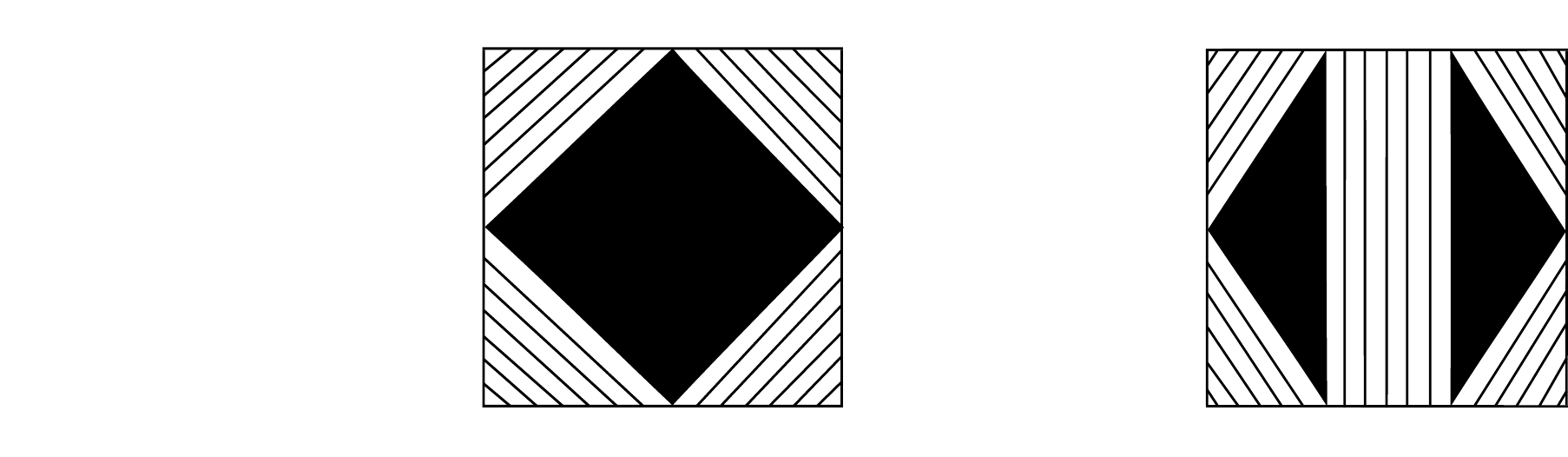
\end{figure}
    \textbf{Case 1: $a = b$.} Then $c = \sup\bigl\{ x\in[0,1] \mid h_{(x,0)}\cap \bigl(\{0\}\times [0,1]\bigr) \neq\emptyset\bigr\} = \inf\bigl\{ x\in[0,1] \mid h_{(x,0)}\cap \bigl(\{1\}\times [0,1]\bigr) \neq\emptyset\bigr\}$.  Let $\rho:[0,1]\to [0,1]$ be the piecewise linear map sending $[0,a] $ to $[0,c]$ and $[a,1]$ to $[c,1]$.

    Suppose $x\in[0,1]$.  If $x\leq a$, both $h_{(x,1)},h_{(\rho(x),1)}$ intersect $\{0\}\times [0,1]$ which implies that $\d\bigl(f(x,1),f(\rho(x),0)\bigr)\leq \d\bigl(f(x,1),f(u)\bigr) + \d\bigl(f(u),f(v)\bigr) + \d\bigl(f(v),f(\rho(x),0)\bigr)< 3\delta + \delta + 3 \delta = 7\delta$ for some point $u,v$ on the left side of $\cS$.  If $x\geq a$ the analogous argument applied to the right side shows the same inequality.

    \textbf{Case 2: $a < b$.}  Then $h_{(a,1)}$ and $h_{(b,1)}$ both intersects the bottom of $\cS$.  Let $(c,0), (d,0)$ be the unique points of intersection of $h_{(a,1)}, h_{(b,1)}$ with the bottom of $\cS$ respectively.  Since $H$ is noncrossing, for ever $x\in[a,b]$, the decomposition element  $h_{(x,1)}$ must intersect the bottom of $\cS$ in a unique point.

    This defines a map $\rho: [a,b]\to [c,d]$ by letting $\rho(x)$ be the unique point of $h_{(x,1)}$ which is contained in the bottom of $\cS$.  Again $H$ is a noncrossing implies that $\rho$ is increasing and since $H$ is a decomposition of $\cS$ the map $\rho$  must be surjective.  Hence $\rho$ is a homeomorphism. We can extend $\rho$ to all of $[0,1]$ be mapping $[0,a]$ linearly to $[0,c]$ and $[b,1]$ to $[d,1]$.  As before $\d\bigl(f(x,1),f(\rho(x),0)\bigr)<7\delta$, since $h_{(x,1)},h_{(\rho(x),0)}$ are either the same element (and hence have diameter at most $3\delta$) or both intersect the same vertical side of $\cS$.

\end{proof}

\begin{lem}\label{close}
    Suppose that $\alpha,\beta: [0,1] \to X$ are $\delta$-homotopic oscillatory geodesics in a planar continuum $X$.  Then there exits a reparametrization $\rho: [0,1] \to [0,1]$ such that $\d\bigl(\alpha(t), \beta\circ\rho(t)\bigr)< 7 \delta$ for all $t\in[0,1]$.
\end{lem}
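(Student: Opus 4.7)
The plan is to realize the $\delta$-endpoint homotopy between $\alpha$ and $\beta$ as a thick pseudo-oscillatory square and invoke Lemma \ref{weaklyclose}.

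First I would build the square. Let $h:[0,1]\times[0,1]\to X$ be a $\delta$-endpoint homotopy with $h(\cdot,0)=\alpha$ and $h(\cdot,1)=\beta$, and let $f=h|_{\cS}:\cS\to X$, so that the top and bottom of $f$ are $\beta$ and $\alpha$ while the two vertical sides have images of diameter less than $\delta$. Replace each vertical side by an oscillatory geodesic representative of its homotopy class rel endpoints (Theorem \ref{existence}). The reductions used to construct oscillatory geodesics only insert line segments between points of the original path, and because $\diam\bigl(\hull(A)\bigr)=\diam(A)$ in $\mathbb R^2$, the resulting oscillatory geodesic has image inside the closed convex hull of the original side and still has diameter less than $\delta$. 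Homotoping $h$ on thin vertical strips realizes this replacement and yields a pseudo-oscillatory square $\tilde f:\cS\to X$ whose vertical sides have diameter less than $\delta$ (a side degenerates exactly when the original side loop was nullhomotopic).

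Next I would reduce to the thick case by splitting $\tilde f$ along its subdivisions. Let $S\subset[0,1]^2$ be the set of subdivisions of $\tilde f$; $S$ is closed (by planar shape injectivity, as in the proof of Lemma \ref{subdivisions}) and each first coordinate appearing in $S$ has a unique second-coordinate partner (because $\alpha$ is an oscillatory geodesic). The complement in $[0,1]$ of the first coordinate projection of $S$ is a countable union of open intervals $(s_L,s_R)$; each gap determines a sub-square of $\tilde f$ with top $\beta|_{[s_L,s_R]}$, bottom $\alpha|_{[t_L,t_R]}$, and vertical sides either inherited from the small-diameter sides of $\tilde f$ (at the two boundary gaps) or collapsed to subdivision points (at interior gaps). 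Each such sub-square is a thick pseudo-oscillatory square whose vertical sides have diameter less than $\delta$.

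Apply Lemma \ref{weaklyclose} to each sub-square. A boundary gap yields a reparametrization identifying top and bottom to within $7\delta$. An interior gap has both vertical sides degenerate, so Theorem \ref{unique} forces $\beta|_{[s_L,s_R]}$ and $\alpha|_{[t_L,t_R]}$ to be reparametrizations of each other. For $s$ in the projection of $S$ the subdivision pairing $s\mapsto t$ gives $\beta(s)=\alpha(t)$. Assembling these pieces produces a monotone homeomorphism $\rho:[0,1]\to[0,1]$ with $\d\bigl(\beta(s),\alpha(\rho(s))\bigr)<7\delta$ for all $s$, and inverting $\rho$ gives the reparametrization in the statement. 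The main obstacle is the subdivision bookkeeping: one must verify that the gaps of the projection of $S$ yield thick pseudo-oscillatory sub-squares of side diameter less than $\delta$ and that the piecewise reparametrizations glue continuously into a monotone homeomorphism of $[0,1]$. The $7\delta$ bound then follows immediately from Lemma \ref{weaklyclose} once the diameter control on the replacement sides from the convex-hull observation is in place.
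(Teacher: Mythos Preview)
Your approach is correct and follows the same overall strategy as the paper: build a pseudo-oscillatory square from the $\delta$-endpoint homotopy and invoke Lemma \ref{weaklyclose} on thick pieces. Your care in replacing the vertical sides by oscillatory geodesics via the convex-hull diameter bound is exactly what the paper leaves implicit when it simply asserts the existence of such a pseudo-oscillatory square.

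The one substantive difference is in how subdivisions are handled. You take the full closed set $S$ of subdivisions, decompose $[0,1]$ into its projection together with countably many complementary gaps, and treat boundary gaps via Lemma \ref{weaklyclose}, interior gaps via uniqueness, and points of the projection via the pairing map---then glue. The paper instead uses Lemma \ref{subdivisions} to extract only the two \emph{extremal} subdivisions $(s_0,t_0)$ and $(s_1,t_1)$, observes that the entire middle portion $\alpha|_{[s_0,s_1]}$ is an oscillatory geodesic homotopic rel endpoints to $\beta|_{[t_0,t_1]}$ and hence a reparametrization of it by Theorem \ref{unique}, and applies Lemma \ref{weaklyclose} to just the two flanking thick sub-squares. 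This yields exactly three pieces to glue rather than countably many, so the continuity and monotonicity of the assembled $\rho$ are immediate. Your countable gluing does go through (the pairing map is continuous on the closed projection of $S$, and the gap reparametrizations match it at the gap endpoints), but you are doing more work than needed: once the outermost subdivisions are located, uniqueness handles the entire middle in one stroke.
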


\begin{proof}
    We can assume that $\alpha,\beta$ are parameterized proportional to total oscillation. Let $f: \cS \to X$ be a pseudo-oscillatory square such that $f(x,1) = \alpha(x)$; $f(x,0)= \beta(x)$ for all $x\in[0,1]$ and $\diam\bigl(\im(h|_{\{i\}\times[0,1]})\bigr)<\delta$ for $i\in \{0,1\}$.  If $f$ has no subdivisions then it is thick and the lemma follows from Lemma \ref{weaklyclose}.

    Otherwise, we may find subdivisions $(s_0,t_0)$ and $(s_1, t_1)$ of $f$ which satisfy the maximality condition of Lemma \ref{subdivisions}.  Since oscillatory geodesics are unique $\alpha|_{[s_0, s_1]}$ is a reparametrization of $\beta|_{[t_0, t_1]}$ and we can define $\rho_1: [s_0, s_1]\to [t_0,t_1]$ such that $f(x,1) = f\bigl(\rho_1(x),0\bigr)$ for all $x\in[s_0,s_1]$.  Since $f$ restricted to the top or the bottom as no constant subpaths, $\rho_1$ can be chosen to be a homeomorphism.

    Since $f(s_0,1)= f(t_0,0)$, we may define a thick pseudo-oscillatory square $f_l$ by $$f_l(x,y) =\begin{cases} f(xs_0, y) &\mbox{ if } y = 1\\
    f(xt_0, y) &\mbox{ if } y = 0\\
    f(x,y)  &\mbox{ if } x = 0\\
    f(s_0,1) &\mbox{ if } x = 1\end{cases}.$$

    By applying Lemma \ref{weaklyclose} to $f_l$, we can find a reparametrization $\bar\rho_l: [0,1]\to [0,1]$ such that $\d\bigl(f_l(x,1), f_l(\bar\rho_l(x),0)\bigr) < 7 \delta$.  Define $\rho_l: [0,s_0]\to [0, t_0]$  by $\rho_l(x) = t_0\bar \rho_l(x/s_0)$ for $x\in[0,s_0]$.

    Notice that $\rho_l(s_0) = \rho_1(s_0)$. For $x\in[0,s_0]$, we have
    \begin{align*}
        \d\bigl(f(x,1), f(\rho_l(x),0)\bigr)  &= \d\bigl(f_l(x/s_0,1), f(t_0\bar\rho_l(x/s_0),0)\bigr)  \\
        & =\d\bigl(f_l(x/s_0,1), f_l(\bar\rho_l(x/s_0),0)\bigr)< 7\delta
    \end{align*}

 We can repeat the argument used to define $\rho_l$ to define a reparametrization $\rho_r: [s_1,1] \to [t_1, 1]$ such that  $\d\bigl(f(x,1), f(\rho_l(x),0)\bigr)< 7\delta$.  Then $\rho: [0,1]\to [0,1]$ given by

        $$\rho(x) =\begin{cases} \rho_l(x) &\mbox{ if } x \leq s_0\\
        \rho_1(x) &\mbox{ if } s_0\leq x \leq s_1\\
        \rho_r(x) &\mbox{ if } x \geq s_1\end{cases}$$
is the desired reparametrization.
\end{proof}

Lemma \ref{close} shows that any two oscillatory geodesics which are $\delta$-endpoint homotopic have parameterizations which are $7\delta$-close.  What we will actually require is that $\delta$-endpoint homotopic paths that are parameterized by total oscillation are $\epsilon$-close.

\begin{prop}\label{continuously}
    Let $X$ be a planar continuum.  For every $\epsilon>0$, there exists a $\delta>0$ such that if two paths are $\delta$-endpoint homotopic, then their oscillatory geodesic representatives parameterized by total oscillation are $\epsilon$-close.
\end{prop}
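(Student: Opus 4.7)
The plan is to apply Lemma \ref{close} to the oscillatory geodesic representatives of $\alpha$ and $\beta$, then translate the resulting up-to-reparametrization control into explicit closeness in the total-oscillation parameterization via Corollary \ref{small oscillation difference} and Lemma \ref{small diameter}. Given $\epsilon > 0$, we first invoke Lemma \ref{small diameter} to choose $\eta > 0$ so that every path in $X$ of total oscillation less than $\eta$ has diameter less than $\epsilon/3$; next, by Corollary \ref{small oscillation difference}, we choose $\delta_1 > 0$ so that any two uniformly $\delta_1$-close paths have total oscillations differing by less than $\eta$; finally we set $\delta = \min\{\delta_1/7,\,\epsilon/21\}$.

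Now suppose $\alpha, \beta$ are $\delta$-endpoint homotopic, and let $\tilde\alpha, \tilde\beta$ be their oscillatory geodesic representatives (which exist and are unique up to reparametrization by Theorems \ref{existence} and \ref{unique}), parameterized proportional to total oscillation on $[0,1]$. Since each is homotopic rel endpoints to the corresponding original path, $\tilde\alpha$ and $\tilde\beta$ remain $\delta$-endpoint homotopic. Lemma \ref{close} produces a reparametrization $\rho: [0,1] \to [0,1]$ with
$$\d\bigl(\tilde\alpha(t),\, \tilde\beta(\rho(t))\bigr) < 7\delta \leq \delta_1 \qquad\text{for all } t\in[0,1].$$
Applying Corollary \ref{small oscillation difference} to the restrictions $\tilde\alpha|_{[0,t]}$ and $\tilde\beta|_{[0,\rho(t)]}$ (which are uniformly $\delta_1$-close) then yields
$$\bigl|\mathcal{T}(\tilde\alpha|_{[0,t]}) - \mathcal{T}(\tilde\beta|_{[0,\rho(t)]})\bigr| < \eta \qquad\text{for all } t\in[0,1],$$
and in particular $\bigl|\mathcal{T}(\tilde\alpha) - \mathcal{T}(\tilde\beta)\bigr| < \eta$.

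Finally, we pass to the total-oscillation parameterizations $\tilde\alpha^*:[0,\mathcal{T}(\tilde\alpha)]\to X$ and $\tilde\beta^*:[0,\mathcal{T}(\tilde\beta)]\to X$. For $u$ in the common domain, let $t_\alpha, t_\beta \in [0,1]$ be the corresponding preimages, so $\tilde\alpha^*(u) = \tilde\alpha(t_\alpha)$ and $\tilde\beta^*(u) = \tilde\beta(t_\beta)$. Then
$$\d\bigl(\tilde\alpha^*(u),\, \tilde\beta^*(u)\bigr) \leq \d\bigl(\tilde\alpha(t_\alpha),\, \tilde\beta(\rho(t_\alpha))\bigr) + \d\bigl(\tilde\beta(\rho(t_\alpha)),\, \tilde\beta(t_\beta)\bigr).$$
The first term is less than $7\delta \leq \epsilon/3$; the second is bounded by the diameter of the subpath of $\tilde\beta$ between $t_\beta$ and $\rho(t_\alpha)$, which has total oscillation $\bigl|\mathcal{T}(\tilde\beta|_{[0,\rho(t_\alpha)]}) - u\bigr| < \eta$, hence diameter less than $\epsilon/3$ by Lemma \ref{small diameter}. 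For $u$ beyond $\min\{\mathcal{T}(\tilde\alpha), \mathcal{T}(\tilde\beta)\}$, the same estimate combined with the tail bound $|\mathcal{T}(\tilde\alpha) - \mathcal{T}(\tilde\beta)| < \eta$ absorbs the constant-extension region in the definition of $\epsilon$-close. The main obstacle is exactly this last conversion: Lemma \ref{close} only controls $\tilde\alpha$ and $\tilde\beta$ modulo a reparametrization $\rho$ that need not agree with the total-oscillation rescaling, and the two must be reconciled using that small differences in the total-oscillation parameter correspond to small-diameter motion along the path.
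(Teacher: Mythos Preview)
Your proof is correct and follows the same strategy as the paper's: apply Lemma \ref{close} to obtain $7\delta$-closeness up to reparametrization, then use Corollary \ref{small oscillation difference} and Lemma \ref{small diameter} (together with the subadditivity $\mathcal T(\beta|_{[0,s]}) + \mathcal T(\beta|_{[s,t]}) \leq \mathcal T(\beta|_{[0,t]})$, which the paper records explicitly) to convert this into $\epsilon$-closeness in the total-oscillation parameterization. The only imprecision is phrasing: the subpath of $\tilde\beta$ between $t_\beta$ and $\rho(t_\alpha)$ has total oscillation \emph{at most} $\bigl|\mathcal T(\tilde\beta|_{[0,\rho(t_\alpha)]}) - u\bigr|$ (via subadditivity), not equal to it, but that is precisely the inequality you need.
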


\begin{proof}
    Notice that for any path $\beta$ and any $0\leq s< t\leq \mathcal T(\beta)$, $$ \mathcal T\bigl(\beta|_{[0,s]}\bigr)+ \mathcal T\bigl(\beta|_{[s,t]}\bigr)\leq \mathcal T\bigl(\beta|_{[0,t]}\bigr).$$

    Fix $\epsilon>0$. We may will choose $0< \delta_1,\delta_2 \leq \epsilon/4$ with the following properties. (Note $\delta_2$ will depend on $\delta_1$.)

    \begin{enumerate}[(i)]

	   \item By Lemma \ref{small diameter} choose $\delta_1$ such that, for any path $\alpha$ in $X$, if $\mathcal T(\alpha)\leq \delta_1$ then $\diam\bigl(\im(\alpha)\bigr) < \epsilon/4$.
	   \item \label{ii} By Lemma \ref{small oscillation difference} choose $\delta_2$ such that, for any two paths $\alpha,\beta:[0,1]\to X$, if $\d\bigl(\alpha(t),\beta(t)\bigr)\leq \delta_2$ for all $t\in[0,1]$ then $|\mathcal T(\alpha)-\mathcal T(\beta)|<\delta_1$.
	
    \end{enumerate}

    Suppose that $\alpha: [0,a]\to X$ and $\beta: [0,b]\to X$ are $\delta$-endpoint homotopic oscillatory geodesics parameterized by total oscillation where $\delta = \min\{\delta_1,\delta_2\}/8$.  Without loss of generality, we can assume that $a\leq b$.

    By Lemma \ref{close}, there exist reparametrizations $\theta_\alpha : [0,1] \to [0,a]$ and $\theta_\beta:[0,1]\to [0,b]$ such that $\d\bigl( \alpha\circ\theta_\alpha(t),\beta\circ\theta_\beta(t)\bigr)< 7\delta< \delta_2$ for all $t\in[0,1]$.  This implies that $|a-b|=|\mathcal T(\alpha) -\mathcal T(\beta)|<\delta_1$.

    Fix $t_0\in\bigl[0,\min\{a,b\}\bigr]$ and choose $s_0\in[0,1]$ such that $\theta_\alpha(s_0) =t_0$.  Then $\bigl|t_0 - \theta_\beta(s_0)\bigr|= \bigl|\mathcal T(\alpha|_{[0,t_0]})- T(\beta|_{[0,\theta_\beta(s_0)]})\bigr| <\delta_1$ by property (\ref{ii}).

    If $ t_0 - \theta_\beta(s_0)\geq 0$, then

    \begin{align*}
        t_0 -\delta_1 + \mathcal T\bigl(\beta|_{\bigl[\theta_\beta(s_0),t_0\bigr]}\bigr) &\leq \theta_\beta(s_0) +  \mathcal T\bigl(\beta|_{\bigl[\theta_\beta(s_0),t_0\bigr]}\bigr) \\
        &= \mathcal T\bigl(\beta|_{\bigl[0,\theta_\beta(s_0)\bigr]}\bigr)+ \mathcal T\bigl(\beta|_{\bigl[\theta_\beta(s_0),t_0\bigr]}\bigr)\leq \mathcal T\bigl(\beta|_{\bigl[0,t_0\bigr]}\bigr)= t_0
    \end{align*}
    which implies that $\mathcal T\bigl(\beta|_{\bigl[\theta_\beta(s_0),t_0\bigr]}\bigr)\leq \delta_1$.  If $ t_0 - \theta_\beta(s_0)<0 $, then
    \begin{align*}
        \theta_\beta(s_0) -\delta_1 + \mathcal T\bigl(\beta|_{\bigl[t_0,\theta_\beta(s_0)\bigr]}\bigr) &\leq t_0+  \mathcal T\bigl(\beta|_{\bigl[t_0,\theta_\beta(s_0)\bigr]}\bigr)\\
        &= \mathcal T\bigl(\beta|_{\bigl[0,t_0\bigr]}\bigr)+ \mathcal T\bigl(\beta|_{\bigl[t_0,\theta_\beta(s_0)\bigr]}\bigr)\leq \mathcal T\bigl(\beta|_{\bigl[0,\theta_\beta(s_0)\bigr]}\bigr)=\theta_\beta(s_0)
    \end{align*}
    which implies that $\mathcal T\bigl(\beta|_{\bigl[t_0, \theta_\beta(s_0)\bigr]}\bigr)\leq \delta_1$.  In both cases, our choice of $\delta_1$ implies the we have the following inequality $\d\bigl(\beta(t_0), \beta\circ\theta_\beta(s_0)\bigr) <\epsilon/4$.

    Therefore $\d\bigl(\alpha(t_0), \beta(t_0)\bigr) \leq \d\bigl(\alpha(t_0), \beta\circ\theta_\beta(s_0)\bigr) + \d\bigl( \beta\circ\theta_\beta(s_0), \beta(t_0)\bigr)<\epsilon/2$ for all $t_0\in\bigl[0,\min\{a,b\}\bigr]$.

    We can now consider the case $t_0\in(a,b]$.  Notice $a + \mathcal T\bigl(\beta|_{\bigl[a,b\bigr]}\bigr)= \mathcal T\bigl(\beta|_{\bigl[0,a\bigr]}\bigr)+ \mathcal T\bigl(\beta|_{\bigl[a,b\bigr]}\bigr)\leq \mathcal T\bigl(\beta\bigr)= b$, which proves $\diam\bigl(\beta|_{[a,b]}\bigr)<\epsilon /4$.  Thus $\d\bigl(\alpha(a), \beta(t_0)\bigr) \leq \d\bigl(\alpha(a), \beta(a)\bigr) + \d\bigl( \beta(a), \beta(t_0)\bigr)<\epsilon$.

\end{proof}

\section{The fundamental group determines homotopy type}

\begin{defn}
    Let $\alpha: \bigl([0,1], 0,1\bigr) \to \bigl(X, x_0, x_1\bigr)$ be a path.  We will use $\overline \alpha$ to denote the path $\overline \alpha (t) = \alpha(1-t)$.  The path $\alpha$ induces a change of base point isomorphism $\widehat \alpha : \pi_1(X, x_0) \to \pi_1(X,x_1)$ defined by $\widehat{\alpha}([s]) = [\overline\alpha* s* \alpha]$.  Given continuous maps $f,g: Y\to X$; we will say that $f$ and $g$ are \emph{conjugate by a path} $\alpha: (I,0,1) \to \bigl(X, g(y_0), f(y_0)\bigr)$ if $f_*$ and $\widehat \alpha \circ g_*$ give the same homomorphism from $\pi_1(Y,y_0)$ to $\pi_1\bigl(X, f(y_0)\bigr)$.
\end{defn}

\begin{lem}\label{classical}[A classical lemma]
    Suppose that $f,g: Y\to X$ are homotopic maps, i.e. there exists $ F:Y\times I \to X$ such that $F(y,0) = g(y)$ and $F(y,1) = f(y)$.  For every $y\in Y$ there exists a path $\alpha_y$ from $g(y)$ to $f(y)$ such that $f$ and $g$ are conjugate by $\alpha_y$.
\end{lem}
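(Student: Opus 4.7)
The plan is to define $\alpha_y$ explicitly from the homotopy $F$ and then verify the conjugacy condition using a standard ``square'' argument. Specifically, set
$$\alpha_y(t) = F(y, t),$$
which is continuous in $t$ and satisfies $\alpha_y(0) = F(y,0) = g(y)$ and $\alpha_y(1) = F(y,1) = f(y)$, so $\alpha_y$ is a path from $g(y)$ to $f(y)$ as required.

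Next, I would verify that $f$ and $g$ are conjugate by $\alpha_{y_0}$. Fix a loop $s: (I, 0, 1) \to (Y, y_0, y_0)$ representing an arbitrary element of $\pi_1(Y, y_0)$; the goal is to show
$$[f \circ s] = [\overline{\alpha_{y_0}} * (g \circ s) * \alpha_{y_0}]$$
in $\pi_1(X, f(y_0))$. Define $H: I \times I \to X$ by $H(t, u) = F(s(t), u)$. Then $H$ is continuous, with $H(t,0) = g \circ s(t)$, $H(t,1) = f \circ s(t)$, and $H(0,u) = H(1,u) = F(y_0, u) = \alpha_{y_0}(u)$. Thus $H$ realizes a free homotopy between the loops $g \circ s$ and $f \circ s$, with both basepoints tracing out the path $\alpha_{y_0}$.

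From this square one extracts the desired endpoint-preserving homotopy by a routine reparametrization: the boundary loop obtained by traversing the bottom $g \circ s$ forward, then the right edge $\alpha_{y_0}$ upward, then the top $f \circ s$ backward, then the left edge $\alpha_{y_0}$ downward, bounds the disk $I \times I$ under $H$, hence is nullhomotopic in $X$ based at $f(y_0)$. Rearranging this relation yields $[f \circ s] = [\overline{\alpha_{y_0}} * (g \circ s) * \alpha_{y_0}] = \widehat{\alpha_{y_0}}([g \circ s])$, which is exactly the conjugacy $f_* = \widehat{\alpha_{y_0}} \circ g_*$.

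There is no real obstacle here; the only slightly fiddly step is writing down the explicit reparametrization of $I \times I$ that identifies the boundary loop with $(\overline{\alpha_{y_0}} * (g \circ s) * \alpha_{y_0}) * \overline{f \circ s}$, but this is a standard piecewise-linear reparametrization of the square and does not interact with the topology of $X$. Since the argument uses only the composition $F \circ (s \times \mathrm{id})$ and continuity, it applies with no hypothesis on $X$ or $Y$ beyond those already in force.
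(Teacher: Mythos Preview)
Your proposal is correct and takes exactly the same approach as the paper: the paper's entire proof is the single line ``Notice that $\alpha_y(t) = F(y,t)$ is the desired path,'' leaving the verification implicit since the result is classical. Your square argument spelling out the conjugacy $f_* = \widehat{\alpha_{y_0}} \circ g_*$ is the standard justification the paper omits; note that since the lemma asserts this for every $y$, your argument should be understood with $y_0$ replaced by an arbitrary basepoint $y$, but nothing in the argument depends on the particular choice.
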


    Notice that  $\alpha_y (t) =F(y,t)$ is the desired path.  The following lemma allows us to find a suitable path to prove a converse to Lemma \ref{classical} for planar and one-dimensional Peano continua.

\begin{lem}\label{map induced paths}
    Suppose that $f,g:Y\to X$ are continuous maps that are conjugate by a path $\alpha: (I,0,1) \to \bigl(X, g(y_0), f(y_0)\bigr)$. Let $\beta: \bigl([0,1],0,1\bigr) \to \bigl(Y, y_0,y\bigr)$ be a path. Then $f\circ\overline\beta*\overline\alpha*g\circ\beta$ is a path from $f(y)$ to $g(y)$ whose homotopy class is independent of $\beta$.
\end{lem}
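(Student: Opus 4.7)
The plan is to show that for two paths $\beta_1,\beta_2:([0,1],0,1)\to(Y,y_0,y)$, the difference of the two candidate homotopy classes is the class of a nullhomotopic loop at $f(y)$, and that this nullhomotopy follows directly by applying the conjugation hypothesis to the loop $\gamma=\beta_1*\overline{\beta_2}$ based at $y_0$.

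More concretely, I would form the loop
$$L \;=\; (f\circ\overline{\beta_1})*\overline{\alpha}*(g\circ\beta_1)*\overline{(g\circ\beta_2)}*\alpha*(f\circ\beta_2),$$
which is based at $f(y)$, and observe that the equality of homotopy classes in the statement is equivalent to $L$ being nullhomotopic rel endpoints. Using $g\circ\beta_1*\overline{g\circ\beta_2}=g\circ\gamma$, this rewrites as
$$L \;=\; (f\circ\overline{\beta_1})*\bigl[\overline{\alpha}*(g\circ\gamma)*\alpha\bigr]*(f\circ\beta_2).$$
The hypothesis that $f$ and $g$ are conjugate by $\alpha$ says precisely that $f_*[\gamma]=\widehat{\alpha}\circ g_*[\gamma]$ in $\pi_1(X,f(y_0))$, i.e.\ $[\overline{\alpha}*(g\circ\gamma)*\alpha]=[f\circ\gamma]=[(f\circ\beta_1)*\overline{(f\circ\beta_2)}]$. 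Substituting into $L$ I get
$$L \;\simeq\; (f\circ\overline{\beta_1})*(f\circ\beta_1)*\overline{(f\circ\beta_2)}*(f\circ\beta_2),$$
which is visibly nullhomotopic by the standard cancellation of a path with its reverse (twice, both contracting at $f(y)$).

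There is really no serious obstacle: the whole argument is a careful bookkeeping of concatenations and an application of the definition of being conjugate by $\alpha$. The only point that requires a small amount of care is recognizing that the equality of two homotopy classes of paths with fixed endpoints $f(y)$ and $g(y)$ can be reformulated as the nullhomotopy of a specific loop at $f(y)$, so that the conjugation hypothesis (which is a statement about $\pi_1$ at the basepoint $f(y_0)$) can be applied to the middle factor $\overline{\alpha}*(g\circ\gamma)*\alpha$. Once the loop is written in that grouped form, the computation is forced.
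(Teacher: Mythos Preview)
Your proof is correct and follows essentially the same approach as the paper: both apply the conjugation hypothesis to the loop $\gamma=\beta_1*\overline{\beta_2}$ at $y_0$. The paper phrases the conclusion slightly more directly (deducing $f\circ\overline\beta_1*\overline\alpha*g\circ\beta_1\simeq f\circ\overline\beta_2*\overline\alpha*g\circ\beta_2$ from $[f\circ\gamma]=[\overline\alpha*(g\circ\gamma)*\alpha]$ without explicitly forming the difference loop $L$), but the underlying computation is identical.
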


We will say that any path from $f(y)$ to $g(y)$ in the homotopy class of $f\circ \overline\beta * \overline\alpha * g\circ\beta$ is \emph{induced by $f \stackrel{\alpha}{\rightarrow} g$}.

\begin{proof}
    Let $\beta_i: \bigl([0,1],0,1\bigr) \to \bigl(Y, y_0,y\bigr)$ be a path from $y_0$ to $y$ for $i = 1,2$.  Then $\beta_1*\overline\beta_2$ is a loop at $y_0$  which implies that $f\circ\beta_1*f\circ\overline\beta_2$ is homotopic relative to endpoints to $ \overline\alpha*g\circ\beta_1*g\circ\overline\beta_2*\alpha$.  This implies that $f\circ\overline\beta_1*\overline\alpha*g\circ\beta_1$ is homotopic to $f\circ\overline\beta_2*\overline\alpha*g\circ\beta_2.$
\end{proof}

\begin{lem}\label{close paths}
    Let $f,g:Y \to X$ be continuous maps  from a Peano continuum into a planar or one-dimensional Peano continuum.  Suppose that $f$ and $g$ are conjugate by a path $\alpha: (I,0,1) \to \bigl(X, g(y_0), f(y_0)\bigr)$.  Then the oscillatory geodesics induced by $f\stackrel{\alpha}{\rightarrow} g$ vary continuously.

    To be precise, for every $\epsilon>0$ there exists a $\delta>0$ such that, for every $y_1, y_2\in Y$, the oscillatory geodesics induced by $f\stackrel{\alpha}{\rightarrow} g$ from $f(y_1)$ to $g(y_1)$ and from $f(y_2)$ to $g(y_2)$ are $\epsilon$-close whenever $\d(y_1,y_2)< \delta$.
\end{lem}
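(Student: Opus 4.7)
The plan is to reduce the continuity of the induced oscillatory geodesics to Proposition \ref{continuously} by producing, for nearby $y_1,y_2\in Y$, representatives of the corresponding induced homotopy classes that are $\eta$-endpoint homotopic for arbitrarily small $\eta>0$.

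Fix $\epsilon>0$. First I apply Proposition \ref{continuously} (for planar $X$; the one-dimensional case is handled analogously by the corresponding results in \cite{ccz}) to choose $\eta>0$ so that any two $\eta$-endpoint homotopic paths in $X$ have oscillatory geodesic representatives, parameterized by total oscillation, that are $\epsilon$-close. Since $Y$ is a Peano continuum, $f$ and $g$ are uniformly continuous, so I can choose $\delta_1>0$ such that any subset of $Y$ of diameter at most $\delta_1$ is carried by both $f$ and $g$ to a set of diameter less than $\eta$. Using the uniform local path connectedness of $Y$ (compact plus locally path connected), I then choose $\delta>0$ so that any two points of $Y$ within distance $\delta$ are joined by a path in $Y$ of diameter less than $\delta_1$.

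Given $y_1,y_2\in Y$ with $\d(y_1,y_2)<\delta$, I fix such a path $\gamma$ from $y_1$ to $y_2$ together with any path $\beta_1$ from $y_0$ to $y_1$, and set $\beta_2=\beta_1*\gamma$. By Lemma \ref{map induced paths}, the two induced homotopy classes are represented respectively by $p_1=f\circ\overline{\beta_1}*\overline{\alpha}*g\circ\beta_1$ and $p_2=f\circ\overline{\beta_2}*\overline{\alpha}*g\circ\beta_2$, and $p_2$ is homotopic rel endpoints to $\rho:=f\circ\overline{\gamma}*p_1*g\circ\gamma$. Let $\tilde p_1,\tilde p_2$ denote the oscillatory geodesic representatives supplied by Theorem \ref{existence}. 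To produce the desired $\eta$-endpoint homotopy, I parametrize $\rho':=f\circ\overline{\gamma}*\tilde p_1*g\circ\gamma$ on $[0,1]$ with the three pieces on $[0,1/3]$, $[1/3,2/3]$, $[2/3,1]$, and define the standard ``sliding'' homotopy $H(s,t)=\rho'\bigl((1-t)/3 + s(1+2t)/3\bigr)$. Then $H(\cdot,0)$ is a reparametrization of $\tilde p_1$, $H(\cdot,1)=\rho'$, and the endpoint traces $H(0,\cdot)$ and $H(1,\cdot)$ lie respectively inside $f\circ\overline{\gamma}$ and $g\circ\gamma$, each of diameter at most $\eta$ by the choice of $\delta_1$. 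Composing with the rel-endpoints homotopy from $\rho'$ to $\tilde p_2$ yields an $\eta$-endpoint homotopy from $\tilde p_1$ to $\tilde p_2$.

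Proposition \ref{continuously} then delivers the claimed $\epsilon$-closeness of the reparametrizations of $\tilde p_1$ and $\tilde p_2$ by total oscillation. The chief care needed is in the homotopy construction and in verifying that composing an $\eta$-endpoint homotopy with a rel-endpoint homotopy—after the inevitable reparametrizations—remains $\eta$-endpoint homotopic in the sense of Definition \ref{small endpoint homotopic}. This bookkeeping, together with the interaction between oscillatory geodesics and their reparametrizations, is the main obstacle; the remaining ingredients (uniform continuity of $f,g$ and uniform local path connectedness of $Y$) are standard.
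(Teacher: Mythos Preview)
Your proof is correct and follows essentially the same approach as the paper: reduce to Proposition \ref{continuously} via uniform continuity of $f,g$ and uniform local path connectedness of $Y$, by observing that the induced representative at $y_2$ is obtained from the one at $y_1$ by pre- and post-concatenating short paths $f\circ\overline\gamma$ and $g\circ\gamma$. The paper's version is slightly more direct---it applies Proposition \ref{continuously} to the representatives $p_1$ and $f\circ\overline\gamma * p_1 * g\circ\gamma$ themselves rather than first passing to the oscillatory geodesic $\tilde p_1$ and then building the sliding homotopy---but the argument is the same, and your extra bookkeeping about composing an $\eta$-endpoint homotopy with a rel-endpoints homotopy is harmless (the latter has constant endpoint traces).
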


\begin{proof}
    Fix $\epsilon>0$. By Proposition \ref{continuously} (when $X$ is planar) or \cite[Theorem 3.9]{ccz} (when $X$ is one-dimensional), we may choose a $\delta>0$ such that if two paths are $\delta$-endpoint homotopic, then their oscillatory geodesic representatives parameterized by total oscillation are $\epsilon$-close.  Since $f$ and $g$ are continuous and $Y$ is compact, there exists a $\delta_1>0$ with the property that $\diam \bigl(f\bigl(\Ball_{\delta_1}(y)\bigr)\bigr),\diam \bigl(g\bigl(\Ball_{\delta_1}(y)\bigr)\bigr)<\delta$ for every $y\in Y$.  Since $Y$ is locally path connected there exists a $\delta_0$ such that $\Ball_{\delta_0}(y)$ is path connected in $\Ball_{\delta_1}(y)$ for every $y\in Y$.

    Suppose that $y_2\in \Ball_{\delta_0}(y_1)$.  Let $\beta$ be a path from $y_0$ to $y_1$ and $\gamma$ be a path from $y_1$ to $y_2$ contained in $\Ball_{\delta_1}(y_1)$.  Then $f\circ\overline\beta*\overline\alpha*g\circ \beta$  and $f\circ\overline\gamma*f\circ\overline\beta* \overline\alpha*g\circ \beta*g\circ\gamma$ are paths from $f(y_1)$ to $g(y_1)$ and $f(y_2)$ to $g(y_2)$ respectively which are in the homotopy classes induced by $f\stackrel{\alpha}{\rightarrow} g$.  By construction these two paths are $\delta$-endpoint homotopic, since $\max\bigl\{\diam\bigl(\im(g\circ\gamma)\bigr),  \diam\bigl(\im(f\circ\gamma)\bigr)\bigr\}< \delta$.  Thus their oscillatory geodesic representatives are $\epsilon$-close by our choice of $\delta$.
\end{proof}

\begin{thm}\label{homotopic maps}
    Let $f,g:Y\to X$ be continuous maps from a Peano continuum into a planar or one-dimensional Peano continuum.  If $f$ and $g$ are conjugate by a path, then $f$ and $g$ are homotopic maps.

\end{thm}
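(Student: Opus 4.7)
The plan is to build an explicit homotopy $H : Y \times I \to X$ from $f$ to $g$ by letting $H(y, \cdot)$ trace out the oscillatory geodesic representative of the homotopy class from $f(y)$ to $g(y)$ induced by $f \stackrel{\alpha}{\to} g$. For each $y \in Y$, Lemma \ref{map induced paths} provides a well-defined homotopy class of paths from $f(y)$ to $g(y)$; Theorem \ref{existence} and Theorem \ref{unique} (together with the one-dimensional analog from \cite{ccz}) then supply a unique oscillatory geodesic representative $\gamma_y : [0, \mathcal T(\gamma_y)] \to X$, parameterized by total oscillation so that $\gamma_y(0) = f(y)$ and $\gamma_y\bigl(\mathcal T(\gamma_y)\bigr) = g(y)$. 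Since each summand in the definition of $\mathcal T$ is strictly less than $1/2^i$, one always has $\mathcal T(\gamma_y) < 1$. Extending $\gamma_y$ past $\mathcal T(\gamma_y)$ to $[0, 1]$ by the constant $g(y)$, define
\[
H(y, t) = \gamma_y\bigl(\min\{t, \mathcal T(\gamma_y)\}\bigr).
\]
Then $H(y, 0) = f(y)$ and $H(y, 1) = g(y)$ by construction, so the theorem reduces to verifying continuity of $H$.

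For continuity, fix $(y_0, t_0) \in Y \times I$, take a sequence $(y_n, t_n) \to (y_0, t_0)$, and set $a_n = \mathcal T(\gamma_{y_n})$, $b = \mathcal T(\gamma_{y_0})$. Given $\epsilon > 0$, Lemma \ref{close paths} ensures that, for $n$ large, $\gamma_{y_n}$ and $\gamma_{y_0}$ are $\epsilon$-close in the sense of Definition \ref{small endpoint homotopic}. Specializing that condition at parameter $s = t_n$ gives
\[
\d\bigl(H(y_n, t_n),\, \gamma_{y_0}(\min\{t_n, b\})\bigr) = \d\bigl(\gamma_{y_n}(\min\{t_n, a_n\}),\, \gamma_{y_0}(\min\{t_n, b\})\bigr) < \epsilon,
\]
while uniform continuity of $\gamma_{y_0}$ on its compact domain yields $\gamma_{y_0}(\min\{t_n, b\}) \to \gamma_{y_0}(\min\{t_0, b\}) = H(y_0, t_0)$. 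Combining, $H(y_n, t_n) \to H(y_0, t_0)$, which completes the continuity proof.

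The main obstacle is the choice of parameterization. A naive attempt would parameterize $\gamma_y$ proportionally to total oscillation on the common interval $[0, 1]$; continuity of that rescaled $H$ would then require showing $\mathcal T(\gamma_{y_n}) \to \mathcal T(\gamma_{y_0})$, which is awkward because the $\epsilon$-close condition only confines the tail of the longer oscillatory geodesic to a small-diameter neighborhood of the shorter one's endpoint, and small diameter of an oscillatory geodesic subpath does not obviously bound its total oscillation (Lemma \ref{small diameter} goes in the opposite direction). Extending the total-oscillation parameterization by a constant over the common domain $[0, 1]$ bypasses this difficulty: a single application of the $\epsilon$-close condition combined with uniform continuity of the limiting geodesic delivers continuity of $H$ directly.
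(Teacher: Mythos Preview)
Your proof is correct and follows essentially the same approach as the paper: both define the homotopy by $H(y,t)=\gamma_y\bigl(\min\{t,\mathcal T(\gamma_y)\}\bigr)$ with $\gamma_y$ the oscillatory geodesic in the class induced by $f\stackrel{\alpha}{\to}g$, parameterized by total oscillation, and both establish continuity via Lemma~\ref{close paths}. The only cosmetic difference is that the paper handles the $t$-direction with Lemma~\ref{small diameter} (bounding $\diam\bigl(\gamma_{y_0}|_{[t_0,t]}\bigr)$ via its total oscillation $\leq |t-t_0|$), whereas you invoke uniform continuity of $\gamma_{y_0}$ directly; your explicit remark that $\mathcal T(\gamma_y)<1$ is a useful detail the paper leaves implicit.
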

\begin{proof}
    Fix a path $\alpha: (I,0,1) \to \bigl(X, g(y_0), f(y_0)\bigr)$ such that $f_*$ and $\widehat \alpha \circ g_*$ give the same homomorphism from $\pi_1(Y,y_0)$ to $\pi_1\bigl(X, f(y_0)\bigr)$.  Define $F: Y\times I \to X$ by $F(y,t) = \alpha_y\Bigl(\theta\bigl(\mathcal T(\alpha_y),t\bigr)\Bigr)$ where $\alpha_y$ is the oscillatory geodesic from $f(y)$ to $g(y)$ in the path class induced by $f\stackrel{\alpha}{\rightarrow} g$ and $\theta(s,t) =\min\{s,t\}$.

    Fix $(y_1,t_1)\in Y\times I$ and $\epsilon> 0$.  By Lemma \ref{close paths}, we may choose a $\delta_1 >0$ such that $\alpha_{y_1}$ and $\alpha_{y}$ are $\epsilon/2$-close for every $y\in \Ball_{\delta_1}(y_1)$.  Lemma \ref{small diameter} allows us to choose a $\delta_2>0$ such that $\mathcal T(\beta) < \delta_2$ implies that $\diam\bigl(\im(\beta)\bigr)\leq \epsilon/2$.  It is now a straight forward exercise to see that $\d\bigl(F(y_1,t_1), F(y,t)\bigr)< \epsilon$ for every $(y,t)\in \Ball_{\delta_1}(y_1)\times \bigl[\max\{0, t_1-\delta_2\}, \min \{1, t_1+ \delta_2\}\bigr]$.  Thus $F$ is continuous.
\end{proof}

\begin{cor}
    Every map for a connected Peano continuum with a torsion fundamental group into a planar or one-dimensional space is nullhomotopic.
\end{cor}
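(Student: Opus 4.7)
The plan is to apply Theorem \ref{homotopic maps} to a constant map, using the fact that the fundamental group of a planar or one-dimensional Peano continuum is torsion-free. Specifically, I would let $f : Y \to X$ be the given map and let $g : Y \to X$ be the constant map at $f(y_0)$; the goal is to show $f$ and $g$ are conjugate by the constant path $\alpha$ at $f(y_0)$, which by Theorem \ref{homotopic maps} will force $f$ to be homotopic to the constant map.

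First I would invoke the known fact that $\pi_1(X)$ is torsion-free whenever $X$ is a planar or one-dimensional Peano continuum (for one-dimensional spaces this is due to Eda, and the planar case follows from shape injectivity into an inverse limit of graphs, e.g.\ from \cite{FischerZastrow2005}). Combined with the hypothesis that $\pi_1(Y, y_0)$ is torsion, this immediately implies that the induced homomorphism $f_* : \pi_1(Y, y_0) \to \pi_1(X, f(y_0))$ is trivial, since any element of finite order must map to an element of finite order, and the only such element in $\pi_1(X, f(y_0))$ is the identity.

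Next I would observe that $g_* : \pi_1(Y, y_0) \to \pi_1(X, f(y_0))$ is also the trivial homomorphism, and that the change-of-basepoint $\widehat{\alpha}$ associated to the constant path $\alpha$ at $f(y_0)$ is the identity automorphism of $\pi_1(X, f(y_0))$. Thus $f_* = \widehat{\alpha} \circ g_*$, so $f$ and $g$ are conjugate by $\alpha$ in the sense of the paper. Applying Theorem \ref{homotopic maps} then gives that $f$ is homotopic to the constant map $g$, i.e.\ $f$ is nullhomotopic.

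The only nontrivial step is the torsion-freeness of $\pi_1(X)$ for planar and one-dimensional Peano continua, which is an external input rather than something derived here; once that is granted, the corollary is an immediate application of Theorem \ref{homotopic maps}. No delicate estimate or construction is needed beyond recognizing the constant path as the natural conjugating path between two maps whose induced homomorphisms are both zero.
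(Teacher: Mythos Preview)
Your approach is essentially the same as the paper's: show $f_*$ is trivial and then apply Theorem~\ref{homotopic maps} with the constant map as $g$. The paper phrases the torsion-freeness input as ``locally free'' rather than ``torsion-free'', but either suffices here.

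There is one genuine gap. The corollary is stated for maps into an arbitrary planar or one-dimensional \emph{space}, whereas Theorem~\ref{homotopic maps} requires the target to be a planar or one-dimensional \emph{Peano continuum}. You invoke torsion-freeness only for Peano continua and then apply Theorem~\ref{homotopic maps} directly, so as written your argument only covers that case. The paper handles this by first replacing $X$ with $f(Y)$: since $Y$ is a Peano continuum, its continuous image $f(Y)$ is again a Peano continuum, and it is still planar or one-dimensional. Once you insert that one-line reduction at the start, your proof is complete and matches the paper's.
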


\begin{proof}
Let $Y$ be a connected Peano continuum with torsion fundamental group and $f: Y\to X$ be any continuous map into a planar or one-dimensional space. We will assume that $X$ is a Peano continuum by passing to the subset $f(Y)$.  Then $f_*$ is trivial, since the fundamental groups of one-dimensional or planar Peano continua are locally free.  Theorem \ref{homotopic maps} implies that $f$ is homotopic to the identity.
\end{proof}

\begin{cor}
    Every map for a simply connected Peano continuum into a planar or one-dimensional space is nullhomotopic.
\end{cor}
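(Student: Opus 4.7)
The plan is to observe that this corollary is an immediate consequence of the preceding corollary, since a simply connected space has trivial fundamental group and the trivial group is (vacuously) torsion: every element has finite order, namely order $1$. Concretely, let $Y$ be a simply connected Peano continuum and $f : Y \to X$ a continuous map into a planar or one-dimensional space. Then $\pi_1(Y, y_0) = \{1\}$ is a torsion group, so the previous corollary applies and $f$ is nullhomotopic.

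If one prefers to argue directly from Theorem \ref{homotopic maps} without invoking the intermediate corollary, the argument is essentially the same and takes two lines. Pass to the Peano continuum $f(Y) \subset X$ so that the image lies in a planar or one-dimensional Peano continuum, and pick any constant map $c : Y \to X$ with $c(Y) = \{f(y_0)\}$. Both $f_*$ and $c_*$ are homomorphisms out of the trivial group $\pi_1(Y, y_0)$, hence both are the zero homomorphism, and in particular they agree (taking $\alpha$ to be the constant path at $f(y_0)$, so that $\widehat\alpha$ is the identity). Thus $f$ and $c$ are conjugate by a path, and Theorem \ref{homotopic maps} gives a homotopy from $f$ to $c$, i.e. $f$ is nullhomotopic.

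The only subtle point worth flagging is the reduction to the case where the target is a Peano continuum: this is needed because Theorem \ref{homotopic maps} is stated for maps into planar or one-dimensional Peano continua, whereas the corollary as stated only requires the target to be a planar or one-dimensional space. Replacing $X$ by $f(Y)$ handles this, since the continuous image of a Peano continuum is a Peano continuum, and the subset $f(Y)$ of a planar (respectively one-dimensional) space is again planar (respectively one-dimensional). No further obstacle arises, as the entire machinery of oscillatory geodesics and induced paths has already been developed.
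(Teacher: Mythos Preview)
Your proposal is correct and matches the paper's intended approach: the corollary is stated without proof immediately after the torsion corollary precisely because the trivial group is torsion, so the previous result applies verbatim. Your alternative direct argument via Theorem~\ref{homotopic maps} and the reduction to the Peano continuum $f(Y)$ is also sound and simply recapitulates the proof of the torsion corollary in this special case.
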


We will need the following lemma proved by Conner and Kent.

\begin{prop}\label{inverse}[\cite[Lemma 3.14]{ConnerKentpreprint2}]
    If $X,Y$ are planar or one-dimensional Peano continuum with isomorphic fundamental groups, then there exists continuous maps $f: X \to Y$ and $g: Y\to X$ such that $g\circ f(x_0) = x_0$  and $(g\circ f)_*$, $(f\circ g)_*$ are the identity homomorphisms of the respective groups.
\end{prop}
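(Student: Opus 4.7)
The plan is to reduce the proposition to an existence theorem: every homomorphism between fundamental groups of planar or one-dimensional Peano continua is induced by a continuous map. Once this is established, the proposition follows quickly. Let $\phi:\pi_1(X,x_0)\to\pi_1(Y,y_0)$ denote the given isomorphism. I would first construct a continuous based map $f:(X,x_0)\to(Y,y_0)$ inducing $\phi$, then apply the same construction to $\phi^{-1}$ to obtain $g:(Y,y_0)\to(X,x_0)$ inducing $\phi^{-1}$. Then $g\circ f(x_0)=x_0$ automatically, and $(g\circ f)_*=g_*\circ f_*=\phi^{-1}\circ\phi=\operatorname{id}$, with the symmetric statement for $f\circ g$.

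The existence step in the one-dimensional case is essentially contained in Eda's work \cite{eda7}: using the shape-theoretic embedding of $\pi_1(X,x_0)$ into an inverse limit of free groups and the rigidity/uniqueness of reduced representatives from Theorem \ref{reducedloop-thm}, one can realize $\phi$ as a limit of combinatorial maps between simplicial approximations and show the limit is continuous by controlling the oscillation (or equivalently the diameters of reduced subpaths). For the planar case, the strategy is to leverage the delineation maps. For each line $k$ in the plane, the map $\pi_k:X\to X_k$ has injective induced homomorphism on $\pi_1$, so $\phi$ descends to a compatible system of homomorphisms $\pi_1(X_k,\pi_k(x_0))\to\pi_1(Y_k,\pi_k(y_0))$ after identifying $\pi_1(X_k)$ with its image in $\pi_1(X)$. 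Applying the one-dimensional existence result to each such projection yields a family of one-dimensional maps, and one uses Lemma \ref{kreduced} and Lemma \ref{shortening} to piece these together into a single continuous $f:X\to Y$ consistent with all projections simultaneously.

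More concretely, for each $x\in X$ I would choose a path $\gamma_x$ from $x_0$ to $x$, and define a candidate value $f(x)$ as the endpoint of an oscillatory geodesic representative of the path class obtained by applying the compatible system above to $[\gamma_x]$. The well-definedness of $f(x)$ is independent of the choice of $\gamma_x$ because $\phi$ preserves the relation of being homotopic, and the continuity of $f$ follows by a variant of Proposition \ref{continuously}: a small perturbation of $x$ can be joined to $x$ by a short path in $X$, whose image under the compatible family has small diameter (via Lemma \ref{small diameter}), so the endpoints of the corresponding oscillatory geodesics are close by Lemma \ref{close paths}.

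The main obstacle is the planar existence step. In the one-dimensional setting the combinatorial structure of reduced paths makes everything rigid, but in the planar case the fundamental group is much wilder, and one must simultaneously respect all delineation projections while producing a single continuous map into the two-dimensional target. The delicate point is verifying that the family of one-dimensional realizations, one for each direction $k$, is consistent enough to lift to a map into $Y$ rather than into some product of $Y_k$'s; this requires exploiting the fact established in Section \ref{pog} that oscillatory geodesics in planar continua exist, are unique, and vary continuously with endpoints, together with shape injectivity of planar Peano continua (as used in the proof of Theorem \ref{unique}).
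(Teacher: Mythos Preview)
The paper does not prove this proposition; it is imported wholesale from \cite{ConnerKentpreprint2}, with only a one-sentence remark afterward noting that the proof there actually yields $g_*=f_*^{-1}$, so that both compositions induce the identity. There is therefore no in-paper argument to compare your proposal against.

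Evaluating your proposal on its own merits: the reduction to an existence statement (every homomorphism between such fundamental groups is induced by a continuous based map) is the right overall shape, and the one-dimensional case is indeed handled by Eda's work. But your bridge to the planar case via delineation has a genuine gap. You write that $\phi$ ``descends to a compatible system of homomorphisms $\pi_1(X_k)\to\pi_1(Y_k)$ after identifying $\pi_1(X_k)$ with its image in $\pi_1(X)$.'' First, the direction is reversed: $\pi_{k*}$ injects $\pi_1(X)$ into $\pi_1(X_k)$, not the other way around. Second, and more fundamentally, an abstract isomorphism $\phi:\pi_1(X)\to\pi_1(Y)$ carries no information about how $X$ and $Y$ sit in the plane, so there is no reason it should intertwine the delineation quotients for any particular direction $k$ on the two sides. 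The quotients $X_k$ and $Y_k$ depend on the embeddings, and $\phi$ is blind to those; you cannot manufacture a map $\pi_1(X_k)\to\pi_1(Y_k)$ out of $\phi$ alone.

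Your ``concrete'' construction of $f(x)$ has a related defect: $\gamma_x$ is a path, not a loop, and $\phi$ is defined only on $\pi_1$, not on the fundamental groupoid. ``Applying the compatible system to $[\gamma_x]$'' is undefined without already knowing where the endpoint $x$ should land, which is precisely what you are trying to determine. The actual construction in \cite{ConnerKentpreprint2} must proceed by some other route --- extracting geometric data (such as the locus where the space fails to be semilocally simply connected) purely from the algebra of $\pi_1$ --- rather than by pushing paths through $\phi$ directly.
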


Note that Lemma 3.14 of \cite{ConnerKentpreprint2} only specifically states that $(g\circ f)_*$ is the identity homomorphism.  However, the proof shows that $g_*$ is the inverse isomorphism to $f_*$ and hence $(f\circ g)_*$ is also the identity isomorphism.

\setcounter{section}{1}\setcounter{thm}{0}
\begin{thm}\label{homotopytype}
    If $X$ and $Y$ are one-dimensional or planar Peano continuum such that $\pi_1(X,x_0)$ is isomorphic to $\pi_1(Y,y_0)$ for some choice of $x_0$ and $y_0$, then $X$ and $Y$ are homotopy equivalent.
\end{thm}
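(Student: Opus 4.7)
The plan is to deduce Theorem~\ref{homotopytype} almost immediately by combining Proposition~\ref{inverse} with Theorem~\ref{homotopic maps}. First I would invoke Proposition~\ref{inverse} to produce continuous maps $f\colon X\to Y$ and $g\colon Y\to X$ with $g\circ f(x_0)=x_0$ and such that $(g\circ f)_{*}$ and $(f\circ g)_{*}$ are the identity homomorphisms on the respective fundamental groups. Setting $y_0'=f(x_0)$ and using $g(y_0')=g\bigl(f(x_0)\bigr)=x_0$, one checks that $f\circ g(y_0')=f(x_0)=y_0'$, so $f\circ g$ is a based self-map of $(Y,y_0')$ and $(f\circ g)_{*}$ is a well-defined endomorphism of $\pi_1(Y,y_0')$.

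Next I would compare $g\circ f$ with $\mathrm{id}_X$ as based maps $(X,x_0)\to(X,x_0)$. Both agree at $x_0$ and induce the identity on $\pi_1(X,x_0)$, so they are conjugate by the constant path at $x_0$ (whose change-of-basepoint isomorphism is the identity). Theorem~\ref{homotopic maps} then gives $g\circ f\simeq \mathrm{id}_X$. Applying the same reasoning to $f\circ g$ and $\mathrm{id}_Y$ at the basepoint $y_0'$ yields $f\circ g\simeq \mathrm{id}_Y$. Thus $f$ is a homotopy equivalence with homotopy inverse $g$, which is exactly the conclusion of the theorem.

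No new analytic work is required at this point: the difficult content has already been concentrated in Theorem~\ref{homotopic maps}, which reduces homotopy of maps into planar or one-dimensional Peano continua to an algebraic condition on induced homomorphisms, and in Proposition~\ref{inverse}, which realizes the given isomorphism and its inverse by genuine continuous maps. The only thing to be careful about is the basepoint bookkeeping so that the hypotheses of Theorem~\ref{homotopic maps} apply with the constant path conjugating the relevant pairs of maps; everything else is formal.
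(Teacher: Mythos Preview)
Your proposal is correct and follows essentially the same approach as the paper's proof: invoke Proposition~\ref{inverse} to obtain $f$ and $g$ with $(g\circ f)_*$ and $(f\circ g)_*$ both the identity, then apply Theorem~\ref{homotopic maps} with the constant path to conclude each composite is homotopic to the identity. Your additional basepoint bookkeeping (verifying $f\circ g(y_0')=y_0'$) is a helpful clarification but does not change the argument.
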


\begin{proof}
    By Proposition \ref{inverse}, we can find maps $f: X \to Y$ and $g: Y\to X$ such that $g\circ f(x_0) = x_0$  and $(g\circ f)_*$, $(f\circ g)_*$ are the identity homomorphisms on the corresponding fundamental groups.  Hence $f\circ g$ and $g\circ f$ are conjugate  by the trivial path to the identity map and Theorem \ref{homotopic maps} implies that $f\circ g $ and $g\circ f$ are homotopic to the identity on $Y$ and $X$ respectively.  Thus, $f$ and $g$ are homotopy equivalences.
\end{proof}

\bibliographystyle{plain}
\bibliography{../../../bib}

\end{document}